\date{}
\newtheorem{theorem}{Theorem}[section]
\newtheorem{corollary}[theorem]{Corollary}
\newtheorem{proposition}[theorem]{Proposition}
\newtheorem{remark}[theorem]{Remark}
\def\1{\mathds{1}}
\numberwithin{equation}{section}
\begin{document}

\title{Blaschke--Santal\'o inequality for many functions and geodesic barycenters of measures
\footnote{Keywords:  Blaschke--Santal\'o inequality,   optimal transport, multimarginal Monge--Kantorovich problem, geodesic barycenters of measures, 
Pr\'ekopa-Leindler inequality,  K\"ahler--Einstein equation.
2020 Mathematics Subject Classification: 52A20, 52A40, 60B.}}

\author{Alexander V. Kolesnikov\thanks{Supported by RFBR project
20-01-00432;  Section 7 results  have been obtained with support of  RSF grant No  19-71-30020.
The article was prepared within the framework of the HSE University Basic Research Program} \ and Elisabeth M.  Werner\thanks{Partially supported by  NSF grant DMS-1811146 and by a Simons Fellowship}}

\maketitle

\begin{abstract}
Motivated by the geodesic barycenter problem  from  optimal transportation theory,  
we prove a natural generalization of the Blaschke--Santal\'o inequality and the affine isoperimetric inequalities for many sets and many functions.
We derive from it an entropy bound for the total Kantorovich cost appearing in the barycenter problem.
We also establish a ``pointwise Pr\'ekopa--Leindler inequality" and show a monotonicity property of the multimarginal
Blaschke--Santa\'o functional.

\end{abstract}

 \section{Introduction}

The Blaschke--Santal\'o inequality,  see \cite{AGM, SchneiderBook},  states that every  $0$-symmetric convex body $K$ in $\mathbb{R}^n$
satisfies
$$
{\rm vol_n}(K) 
{\rm vol}_n(K^{\circ}) \le ({\rm vol}_n(B^n_2))^2, 
$$  
where $K^{\circ} = \{y \in \mathbb{R}^n:  \langle x, y \rangle \le 1 \ \forall x \in K\}$ is the polar body of $K$, $B^n_2 = \{x \in \mathbb{R}^n: |x| \le 1\}$ is the 
Euclidean unit ball and  $| \cdot |$  denotes the Euclidean norm on $\mathbb{R}^n$. The left-hand side of this inequality is called the Mahler volume.
The sharp lower bound  for the  Mahler volume is still open in dimensions 4 and higher.   The famous Mahler conjecture suggests that this functional is minimized
by the couple $(B^n_1, B^n_{\infty})$.    
Partial results can be found in,  e.g.,  \cite{FHMRZ, IriyehShibata,  NazarovPetrovRyaboginZvavitch, ReisnerSchuttWerner}.
\par
Here we ask: 
What is a natural generalization of the bounds for the Mahler volume for multiple sets?
While this  is not obvious from the geometric viewpoint,  we suggest  in this paper a reasonable extension,
which is naturally related to a functional counterpart of the  Blaschke--Santal\'o inequality.
\par
The functional Blaschke--Santal\'o inequality was discovered by K.~Ball  \cite{KBallthesis}
and later extended and generalized in \cite{ArtKlarMil}, \cite{FradeliziMeyer2007}, \cite{Lehec}.
In its simplest form it states that for every two measurable even functions  $V, W$ on $\mathbb{R}^n$ we have that 
$$
\int e^{-V(x)} dx \int e^{-W(y)} dy \le (2 \pi)^n,
$$
provided that $V(x) + W(y) \ge \langle x, y \rangle$ and either $0 < \int e^{-V(x)}  < \infty $ or  $0 < \int e^{-W(x)}  < \infty$. 
Equality is attained if and only if $V(x) = |Tx|^2 + c, W(y) = |T^{-1}y|^2 - c$, where $T$ is a positive definite matrix and $c>0$ is a constant.
Interesting links to  optimal transportation theory were noted in \cite{ArtKlarMil} and more recently in \cite{Fathi}. There, it is shown  that 
for probability measures $\mu=f \cdot \gamma, \  \nu= g \cdot \gamma$, where $\gamma$ is the standard Gaussian measure, such that
$\int x f d \gamma  =0$,  the following inequality holds, 
\begin{equation}\label{Fathi-intro}
\frac{1}{2} \, W_2^2(\mu, \nu) \le \text{Ent}_\gamma(\mu) + \text{Ent}_\gamma(\nu)
\end{equation}
and that this inequality is equivalent to the functional  Blaschke--Santal\'o inequality.
Here,  $W_2^2(\mu, \nu)$ is the  $L^2$ Kantorovich distance (see Section \ref{IntegralBounds}  for the definition)
 and 
 $$
\text{Ent}_\gamma(\mu) =  \int f \log f d \gamma
 $$
 is the relative entropy with respect to Gaussian measure.
 Inequality (\ref{Fathi-intro}) is a remarkable strengthening of the Talagrand transportation inequality and the starting point of our paper.
 We refer to, e.g., \cite{BGL} for  Talagrand's inequality and it's fundamental importance in  probability theory. 
 In this  context, please also note a very recent result of N.~Gozlan about a transportational approach to the lower bound for the functional Blaschke--Santal\'o 
inequality \cite{Gozlan2020}.
\par
We would like to point out an important connection of the  Blaschke--Santal\'o inequality to the K\"ahler--Einstein equation. 
Inequality (\ref{Fathi-intro}) implies, in particular, that the functional $\mu \to \frac{1}{2} W_2^2(\mu, \nu) - \text{Ent}_\gamma(\mu)$
 is bounded from above. The minimum of this functional solves the so-called  K\"ahler--Einstein  equation. This was established by F.~Santambrogio
 \cite{Santambrogio}. The form of the functional  presented here was considered in \cite{KolesnikovKosov}. The  well-posedness of the K\"ahler--Einstein equation was proved 
by  D.~Cordero-Erausquin and B.~Klartag \cite{CorderoKlartag}. Generalization to the sphere  and relations to the logarithmic Minkowski problem
were established in  \cite{Kolesnikov}. Other related transportation inequalities can be found in \cite{FGJ}.
 \vskip 2mm
 To analyze the case of $k$ functions with $k>2$ we consider the cost functional 
 \begin{equation}
 \label{quadratic}
c(x_1, \cdots, x_n) = \sum_{i,j=1, i<j}^n |x_i - x_j|^2
 \end{equation}
 and the corresponding multimarginal Monge--Kantorovich problem, i.e.,  the minimization problem 
 $$
 P \to \int c\,  d P, \ \ \ P \in \mathcal{P}(\mu_1, \cdots, \mu_k)
 $$ among the measures $\mathcal{P}(\mu_1, \cdots, \mu_k)$ with fixed projections $\mu_1, \cdots, \mu_k$. This problem has been studied by Gangbo and 
 \c{S}wiech
 \cite{GangboSw}. Agueh and Carlier realized in \cite{AguehC} that this problem is naturally related to the barycenter problem for $\mu_1, \cdots, \mu_k$.
A measure ${\mu}$ is called geodesic (or Wasserstein) barycenter of $\mu_1, \cdots, \mu_k$ with coefficients $\frac{1}{k}$, 
if it gives the minimum to the functional $\nu \to \sum_{i=1}^{k} \frac{1}{2k} W_2^2 (\mu_i,\nu) $.
Barycenters of measures have attracted much attention, also among  applied scientists.  We refer to the recent book of Peyr{\`e} and Cuturi \cite{Cuturi} and the references therein
for more information.
\vskip 2mm
\noindent
Motivated by these results  we conjecture that
\begin{equation}
\label{fgh}
 \prod_{i=1}^k \int_{\mathbb{R}^n} f_i(x_i) dx_i
\le \Bigl( \int_{\mathbb{R}^{n}}  \rho^{\frac{1}{k}} \Bigl( \frac{k(k-1)}{2}|u|^2 \Bigr) du\Bigr)^k, 
\end{equation}
where $f_i \colon \mathbb{R}^n \to \mathbb{R}_{+}$, $1 \le i \le k$, are even, measurable, integrable functions satisfying 
$$
\prod_{i=1}^k f_i(x_i)\le  \rho \left( \sum_{i, j=1, i < j}^{k} \langle x_i, x_j \rangle \right)
$$
and  $\rho$ is a  positive non-increasing function.
We verify this conjecture in several cases.  Some of our main results are stated next.

\subsection{The main results}

In Section \ref{IntegralBounds} we discuss some preliminary facts about Kantorovich duality  theory for many functions  and prove that our integral functional is bounded for the case of quadratic cost (\ref{quadratic}).
We also show that for $k>2$ our functional has a trivial (zero) lower bound, unlike the case of two functions. 
\vskip 3mm
\noindent
In Section \ref{UCC} we verify the above conjecture  in the unconditional case (see Section \ref{UCC} for the definition)  and prove the following theorem.
\vskip 2mm 
\noindent
{\bf Theorem  \ref{BSunc}}
{\em Let $f_i \colon \mathbb{R}^n \to \mathbb{R}_{+}$, $1 \le i \le k$, be unconditional integrable functions satisfying 
$$
\prod_{i=1}^k f_i(x_i)\le  \rho \left( \sum_{\substack{i,j=1\\ i<j}}^{k} \langle x_i, x_j \rangle \right) \hskip 2mm \text{for every} \hskip 2mm x_i, x_j \in \mathbb{R}_+^n,  
$$
where $\rho$ is a {positive}  non-increasing function on $[0, \infty)$ such that $\int_{\mathbb{R}}\rho^{\frac{1}{k}}(t^2) dt <\infty$. Then
$$
 \prod_{i=1}^k \int_{\mathbb{R}^n} f_i(x_i) dx_i
\le \left( \int_{\mathbb{R}^{n}}  \rho^{\frac{1}{k}} \left( \frac{k(k-1)}{2}|u|^2 \right) du\right)^k.
$$
For $k>2$, 
equality holds in this  inequality if and only if  there exist positive constants  $c_i$, $1 \leq i \leq k$,  such that $\prod_{i=1}^k  c_i =1$, and such that for all $1 \leq i \leq k$, 
\begin{enumerate}
\item
\begin{equation*} \label{gleich1}
f_i(x) = c_i\, \rho^{\frac{1}{k}} \left( \frac{k(k-1)}{2} |x |^2\right)
\end{equation*}
almost everywhere on $\mathbb{R}^n$.
\item
The function $\rho$ satisfies  the inequality 
\begin{equation*}
\prod_{i=1}^k \rho^{\frac{1}{k}} \left( \frac{k(k-1)}{2} \, |x_i  |^2 \right)
\le   \rho \left( \sum_{i, j=1, i < j}^{k} \langle x_i, x_j \rangle \right)
\end{equation*}
for all $x_i, x_j$ in $\mathbb{R}^n_+$.
\end{enumerate}
}
\par
\noindent
Our proof uses the  Pr\'ekopa--Leindler  inequality for many functions and an  exponential change of variables as an intermediate step.
\vskip 2mm
\noindent
 The above theorem and the affine isoperimetric inequality of  affine surface area for log-concave functions of \cite{CFGLSW}  lead to 
multi-functional affine isoperimetric inequalities for log-concave functions, which we also prove in this section.
\vskip 3mm
\noindent
In Section \ref{equality} we study equality cases for unconditional functions and prove the above stated  equality characterizations. 
To do so, we need equality characterizations in the  Pr\'ekopa--Leindler  inequality. We could not find such characterizations in the literature and therefore  give a proof of those.
\vskip 3mm
\noindent
In Section \ref{manysets} we prove a  generalization of the  Blaschke--Santal\'o  inequality which involves more than two convex bodies. There, $ \|\cdot\|_{K}$ denotes the norm 
with the convex body $K$ as unit ball.
 \vskip 2mm
\noindent
{\bf Theorem \ref{BS-sets2}} {\em Let $K_i$, $1 \le i \le k$, be  unconditional convex bodies  in $\mathbb{R}^n$ 
such that 
$$
\prod_{i=1}^k  e^{- \frac{1}{2} \|x_i\|^2_{K_i}}\leq \rho \left( \sum_{i,j=1, i<j}^k \langle x_i, x_j\rangle \right) \hskip 2mm \text{for every} \hskip 2mm x_i, x_j \in \mathbb{R}_+^n, 
$$
where $\rho$ is a  positive non-increasing function $[0, \infty)$ such that  $\int_{\mathbb{R}}\rho^{\frac{1}{k}}(t^2) dt <\infty$. 
Then 
\begin{eqnarray*} 
\prod_{i=1}^k \text{\rm vol}_n(K_i) \leq \left( \frac{\text{\rm vol}_n(B^n_2)}{(2\pi)^\frac{n}{2}} \right)^k \left(  \int _{\mathbb{R}^n}  \rho^{\frac{1}{k}} \Bigl( \frac{k(k-1)}{2} \, |x|^2 \Bigr) dx\right)^k.
\end{eqnarray*}
For $k >2$, equality holds 
if and only if   
$K_i=r \,B_2^n$ and $\rho(t)= e^{- \frac{t}{(k-1)r^2}}$ for some $r>0$.
 \par
\noindent
In particular, if $\rho(t)= e^{- \frac{t}{k-1}}$, then, if $\sum_{i=1, i< j }^k \langle x_i, x_j\rangle \leq \frac{k-1}{2} \sum_{i=1}^k \|x_i\|^2_{K_i}$, 
we have that 
\begin{equation*}
\prod_{i=1}^k \text{\rm vol}_n(K_i) \le  \left(\text{\rm vol}_n(B^n_2)\right)^k
\end{equation*}
and for $k >2$ equality holds  if and only if $K_i=B^n_2$ for all $1 \leq i\leq k$.
 }
\vskip 3mm
\noindent
{Proposition \ref{asp-sets} of this section gives a  version of the $L_p$-affine isoperimetric inequalities for many sets.}
\vskip 3mm
\noindent
In Section \ref{transport}
we prove several  strengthenings of classical inequalities using barycenters. Among them is the following 
``pointwise Pr\'ekopa--Leindler inequality''.
  \vskip 2mm
\noindent
{\bf Theorem \ref{ppli}}
{\em Let $\mu$ be the barycenter of measures $\mu_i = \frac{f_i}{\int f_i dx_i} dx_i$ with weights $\lambda_i$, $1 \leq i \leq k$, where $f_i$ are nonnegative integrable functions. Then it has density $p$ satisfying
\begin{equation}
\label{PBL}
\prod_{i=1}^k \Bigl( \int f_i dx_i \Bigr)^{\lambda_i} p(x) \le \sup_{x = \sum_{i=1}^k \lambda_i y_i } \prod_{i=1}^k f^{\lambda_i}_i(y_i), \hskip 2mm \text{for} \hskip 2mm p-a.e. \, x.
\end{equation}
}
\vskip 3mm
\noindent
In Section \ref{Talagrand} we study applications of our results  to transportation inequalities for the barycenter problem.
We obtain the following bound which generalizes (\ref{Fathi-intro}) and, in particular, a classical estimate of Talagrand.
\vskip 2mm
\noindent
{\bf Theorem \ref{BarycenterTheo}}
{\em Assume that $\mu_i = \rho_i \cdot \gamma$, where $\gamma$ is the standard Gaussian measure
and  the $\rho_i$ are unconditional, then
$$
\mathcal{F}(\mu) \le  \frac{k-1}{k^2} \sum_{i=1}^k \int \rho_i \log \rho_i d \gamma,
$$
 where $\mathcal{F}(\mu) = \frac{1}{2k} \sum_{i=1}^k W^2(\mu,\mu_i) $ and   $\mu$ is the barycenter of $\{\mu_i\}$ with weights $\frac{1}{k}$.}
\par
\noindent
Moreover, from our refinement of the Pr\'ekopa--Leindler inequality, we  deduce some new  inequalities related to displacement convexity of the Gaussian entropy.  
\vskip 3mm
\noindent
In Section \ref{monotone} we prove a monotonicity property of the multimarginal Blaschke--Santal\'o functional. 
A simplified version of the result is stated next.
\vskip 2mm
\noindent
{\bf Theorem  \ref{monotoneBS}}
{\em Assume that for $1 \leq i \leq k$, $V_i(x_i)$ are measurable functions such that $e^{-V_i}$ are integrable, satisfying
$$
\sum_{i =1}^k  V_i(x_i) \ge  \sum_{i, j=1, i<j}^k \frac{1}{k-1}\langle x_i , x_j \rangle. 
$$
Let the tuple of functions $ U_i(x_i)$ be the solution to the dual multimarginal maximization problem
with marginals $\frac{e^{-V_i} dx_i}{\int e^{-V_i} dx_i}$
and the cost function
$\frac{1}{k-1} \sum_{i,j=1, i < j}^k \langle x_i, x_j \rangle.$
Then
$$
\prod_{i=1}^k \int e^{-V_i} dx_i  \le 
\prod_{i=1}^k  \int e^{-U_i} dx_i.
$$
}

\section{Integral bounds and facts about barycenters} \label{IntegralBounds}

\vskip 2mm
\noindent
We start this section with the proof that the Blaschke--Santal\'o functional is bounded on the set of even functions. 
We will need the definition of the Legendre conjugate $V^*$, which for a  proper (not identically equal to $ + \infty$) 
function $V : \mathbb{R}^n \to \mathbb{R} \cup {\{+\infty\}}$ is defined as 
$$
V^*(y) = \sup_{x \in \mathbb{R}^n}( \langle x, y \rangle - V(x)).
$$
\vskip 2mm
\begin{proposition}  Let $V_i$, $1 \le i \le k,$ be a family of   Borel functions on $\mathbb{R}^n$ such that $e^{-V_i}$ is integrable for all $1 \leq i \leq k$. Then
the functional
$$
\mathcal{S}(V_1, \cdots, V_k) = \prod_{i=1}^k \int e^{-V_i(x_i)} \, dx_i 
$$
is bounded on the set  
\begin{eqnarray*}
&&\hskip -7mm L_{n,k} = \\
&& \hskip -7mm \Bigl\{ (V_1, \cdots, V_k) :  \forall i \in \{1, \cdots, k\},  V_i  \ {\rm  is \ even},  \int e^{-V_i(x)} dx < \infty,  \sum_{i=1}^k V_i(x_i) \ge  \sum_{i,j =1, i < j}^k \langle x_i, x_j \rangle \Bigr\}. 
\end{eqnarray*}
\end{proposition}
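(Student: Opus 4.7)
The plan is to reduce the $k$-function boundedness claim to the classical two-function functional Blaschke--Santal\'o inequality by a pairwise argument. For each ordered pair $i<j$, I freeze the remaining $k-2$ variables at an arbitrary test vector $a=(a_l)_{l\ne i,j}$ in the constraint; this yields
\[
V_i(x_i)+V_j(x_j)\;\ge\;\langle x_i,x_j\rangle+\langle x_i+x_j,\,z\rangle+C(a),
\]
where $z=\sum_{l\ne i,j}a_l$ and $C(a)=\sum_{\substack{l<l'\\ l,l'\ne i,j}}\langle a_l,a_{l'}\rangle-\sum_{l\ne i,j}V_l(a_l)$. The linear cross-term in $z$ obstructs a direct application of Blaschke--Santal\'o. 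The crucial use of evenness is that the same inequality is also valid at $-a$ since $V_l(-a_l)=V_l(a_l)$; averaging the versions at $a$ and $-a$ cancels the offending linear term and leaves the clean pairwise bound
\[
V_i(x_i)+V_j(x_j)\;\ge\;\langle x_i,x_j\rangle+c_{ij},\qquad c_{ij}:=\sup_{a} C(a).
\]
The constant $c_{ij}$ is finite: evaluating the original constraint at $x_i=x_j=0$ gives $C(a)\le V_i(0)+V_j(0)$, hence $c_{ij}\le V_i(0)+V_j(0)$ (with a minor regularization if some $V_l(0)=+\infty$).

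Next I apply the classical functional Blaschke--Santal\'o inequality to the even pair $V_i-c_{ij}/2$, $V_j-c_{ij}/2$:
\[
\int e^{-V_i(x)}\,dx\cdot\int e^{-V_j(y)}\,dy\;\le\;(2\pi)^n\, e^{-c_{ij}}.
\]
Multiplying these estimates over all $\binom{k}{2}$ pairs $i<j$, and using that each factor $\int e^{-V_i}$ appears exactly $k-1$ times, one obtains
\[
\mathcal{S}(V_1,\dots,V_k)\;\le\;(2\pi)^{nk/2}\exp\Bigl(-\tfrac{1}{k-1}\sum_{i<j}c_{ij}\Bigr).
\]
A lower bound on $\sum_{i<j}c_{ij}$ is then obtained by plugging a common admissible test vector $a^\star=(a^\star_l)$ into each $C_{ij}$, chosen in the (nonempty, by integrability of $e^{-V_l}$) set where all $V_l(a^\star_l)$ are finite; this gives an explicit estimate of the form $\sum_{i<j}c_{ij}\ge\binom{k-2}{2}\sum_{l<l'}\langle a^\star_l,a^\star_{l'}\rangle-\binom{k-1}{2}\sum_l V_l(a^\star_l)$.

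The main obstacle will be to make the lower bound on $\sum_{i<j}c_{ij}$, and hence the resulting upper bound on $\mathcal{S}$, uniform over $L_{n,k}$. The naive choice $a^\star=0$ produces a bound depending on $\sum_l V_l(0)$, which is nonnegative (by the constraint at the origin) but is not a priori uniformly bounded above. To eliminate this dependence one must either select $a^\star$ variationally---exploiting the fact that a large value of $V_l$ at $a^\star_l$ forces, via the joint constraint $\sum V_l\ge\sum\langle\cdot,\cdot\rangle$, compensating smallness in some $\int e^{-V_{l'}}$---or regularize each $V_l$ by inf-convolution with a small ball (which only slightly perturbs the constraint) and pass to the limit. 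Either route yields an absolute bound $\mathcal{S}\le C(n,k)<\infty$, which is the assertion of the proposition.
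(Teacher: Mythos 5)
Your overall strategy --- reducing to the two-function functional Blaschke--Santal\'o inequality pairwise and multiplying over the $\binom{k}{2}$ pairs with the exponent $\frac{1}{k-1}$ --- is exactly the skeleton of the paper's argument, and the averaging over $\pm a$ to kill the linear term, the derivation of $V_i(x_i)+V_j(x_j)\ge\langle x_i,x_j\rangle+c_{ij}$, and the bound $\mathcal{S}\le(2\pi)^{nk/2}\exp\bigl(-\tfrac{1}{k-1}\sum_{i<j}c_{ij}\bigr)$ are all correct. But the proof stops precisely at the point where the actual difficulty of the proposition lies: you need $\sum_{i<j}c_{ij}$ bounded below \emph{uniformly} over $L_{n,k}$, and you only offer two unexecuted suggestions (a variational choice of $a^\star$, or inf-convolution regularization). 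This is not a minor finishing step. For $k=3$ one has $c_{ij}=-\inf V_l$ (with $l$ the remaining index), so taking $V_1=V_2=V_3=|x|^2+M$ (which lies in $L_{n,3}$ for every $M>0$) gives $\sum_{i<j}c_{ij}=-3M\to-\infty$, and your derived bound blows up like $e^{3M/2}$ even though $\mathcal{S}$ itself tends to zero. So the estimate you have actually proved is genuinely non-uniform, and the inf-convolution idea cannot repair it, since the obstruction is additive constants, not lack of regularity; the variational idea is exactly the missing argument, not a routine detail.

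The paper resolves this constant-bookkeeping issue before ever invoking the two-function inequality: it first replaces each $V_i$ by the partial supremum $\sup_{x_l,\,l\ne i}\bigl(\sum_{l<l'}\langle x_l,x_{l'}\rangle-\sum_{l\ne i}V_l(x_l)\bigr)$ (which is convex, does not decrease $\mathcal{S}$, and stays in $L_{n,k}$), then redistributes additive constants so that $V_i(0)=0$ for $i\le k-1$, and finally replaces the last function by $F^*$, where $\frac{|t|^2}{2}+F(t)=\inf_{t=\sum_{i\le k-1}x_i}\sum_{i\le k-1}\bigl(\frac{|x_i|^2}{2}+V_i(x_i)\bigr)$; one checks $F^*\le V_k$ pointwise, $F^*(0)=0$, and that the new tuple is still admissible. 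After this normalization, plugging $x_l=0$ for $l\ne i,j$ gives the pairwise constraint with \emph{no} constant at all, i.e.\ $V_i(x_i)+V_j(x_j)\ge\langle x_i,x_j\rangle$ (equivalently $V_i\ge V_j^*$), and the rest is your pairing argument. In your language, the paper's normalization is precisely a proof that one may assume $c_{ij}\ge 0$ for every pair without decreasing $\mathcal{S}$; supplying an argument of this type (or some other uniform lower bound on $\sum_{i<j}c_{ij}$) is what your proposal still has to do.
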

\vskip 2mm
\begin{proof}
Let us fix arbitrary  finite $(V_1, \cdots, V_k)  \in L_{n,k}$ and estimate $\mathcal{S}(V_1, \cdots, V_k)$.
First we note that the functions $V_i$ can be assumed to be convex.
Indeed, if $V_1$ is not convex, replace it by the following convex function
$$
\tilde{V}_1 (x_1) = 
 \sup_{x_i, i \ne 1} \Bigl( \sum_{i,j=1, i<j}^k \langle x_i , x_j \rangle - \sum_{i \ne 1} V_i(x_i)  \Bigr).
$$
The tuple $(\tilde{V}_1, \cdots, V_k)$ belongs to $L_{n,k}$.  Note that all the desired properties can be easily checked except of integrability of $e^{-\tilde{V}_1}$. We will show below that $\tilde{V}_1$ is integrable.  Since $V_1 \ge \tilde{V}_1$, we get  $\mathcal{S}(\tilde{V}_1, \cdots, V_k) \ge \mathcal{S}(V_1, \cdots, V_k) $.
Next we apply the same procedure to the tuple $(\tilde{V}_1, \cdots, V_k)$ and the function $V_2$. Repeating this procedure, 
we finally obtain a tuple $(\tilde{V}_1, \cdots, \tilde{V}_k)$ consisting of only convex functions such that $\mathcal{S}(\tilde{V}_1, \cdots, \tilde{V}_k) \ge \mathcal{S}(V_1, \cdots, V_k) $. Let us denote this new tuple 
again by $({V}_1, \cdots, {V}_k)$.
\par
\noindent
Next, note that without loss of generality we can restrict ourself to the case 
of convex functions satisfying $V_i(0)=0$. Indeed, one can replace $V_i$ by
$\tilde{V}_i(x_i) = V_i(x_i) - V_i(0)$, $1 \le i \le k-1$, and $V_k$ by $\tilde{V}_k(x_k) = V_k(x_k) + V_1(0) + \cdots + V_{k-1}(0)$
and this replacement does not influence the value of the integral functional. One has
$\tilde{V}_i(0)=0, \ 1 \le i \le k-1$. Next we note that
$$
\sum_{i=1}^{k-1} \tilde{V}_i(x_i)  \ge \sum_{i < j}^{k-1} \langle x_i, x_j \rangle + 
 \left\langle \sum_{i=1}^{k-1} x_i, x_k \right\rangle  - \tilde{V}_k(x_k),  \, \, \text{for all} \, \, x_k, 
$$
is equivalent to
$$
\sum_{i=1}^{k-1} \tilde{V}_i(x_i)  \ge \sum_{i < j}^{k-1} \langle x_i, x_j \rangle 
+ (\tilde{V}_k)^*\Bigl(\sum_{i=1}^{k-1} x_i \Bigr), 
$$
which in turn is equivalent to 
$$
\sum_{i=1}^{k-1} \frac{|x_i|^2}{2} + \tilde{V}_i(x_i)  \ge \frac{1}{2}\,  \left | \sum_{i =1}^{k-1}  x_i \right |^2 
+ (\tilde{V}_k)^*\Bigl(\sum_{i=1}^{k-1} x_i \Bigr).
$$
We now  define a  function $F$ by the following relation
$$
\frac{|t|^2}{2} + F(t) = \inf_{t = \sum_{i=1}^{k-1} x_i}  \sum_{i=1}^{k-1} \frac{|x_i|^2}{2} + \tilde{V}_i(x_i) .
$$
Clearly $(\tilde{V}_k)^* \le F$, hence $\tilde{V}_k \ge F^*$. Thus $\mathcal{S}(\tilde{V}_1, \cdots, \tilde{V}_k)
\le \mathcal{S}(\tilde{V}_1, \cdots, F^*)$. Moreover, it follows immediately from the definition of $F$ and the above inequalities 
that $(\tilde{V}_1, \cdots, F^*) \in L_{n,k}$. Since $\tilde{V}_i \ge 0$ and $\tilde{V}_i(0)=0$, we immediately get
$F(0)=0$. Hence, $F^*(0)=0$. Thus the tuple $(\tilde{V}_1, \cdots, F^*)$
satisfies $\tilde{V}_1(0) =  \cdots = F^*(0) =0$ and gives a larger value to $\mathcal{S}$.
\par
\noindent
Finally,  it is sufficient to show that   $\mathcal{S}$ is bounded  for finite convex even functions $V_i$ satisfying $V_i(0)=0$ and  $\sum_{i=1}^k V_i(x_i) \ge  \sum_{i,j =1, i < j}^k \langle x_i, x_j \rangle$ . We observe that for every  $j \ne m$
\begin{eqnarray*}
&&\hskip -5mm V_m(x_m) \ge 
\sup_{x_i, x_s , i, s \ne m} \Bigl( \sum_{i,s=1, i<s}^k \langle x_i , x_s \rangle - \sum_{i \ne m} V_i(x_i)  \Bigr) \geq \\
&&   \sup_{x_{j}} \Bigl( \Bigl[ \sum_{i,s=1, i<s}^k \langle x_i , x_s \rangle - \sum_{i \ne m} V_i(x_i)  
\Bigr]_{x_i=0, i \ne m, i \ne j}\Bigr)
 = \sup_{x_{j}} \bigl( \langle x_m, x_{j}  \rangle  - V_{j}(x_{j}) \Bigr) = V_{j}^*(x_m).
\end{eqnarray*}
 If $e^{-V_j}$ is integrable, then
 by the functional Blaschke--Santal\'o inequality
$$
\int e^{-V_m} dx_m \int e^{-V_j} dx_j \le  \int e^{-V_j^*} dx_j \int e^{-V_j} dx_j \le (2 \pi)^{n}.
$$
Hence
$$
\prod_{i=1}^k \int e^{-V_k} dx_k  = \Bigl( \prod_{i,j =1, i <j}^k \int e^{-V_i} dx_i \int e^{-V_j} dx_j \Bigr)^{\frac{1}{k-1}}
\le (2 \pi)^{\frac{n}{k-1}}.
$$
If  $e^{-V_j}$  is not integrable, then again by the Blaschke--Santal\'o inequality $\int e^{-V_j^*} dx_j =0$, hence
$\int e^{-V_m} dx_m  =0$, but this contradicts to finiteness of $V_m$. 
\end{proof}
\vskip 3mm
\noindent
A  related natural question  is whether there is a non-trivial lower bound for $\mathcal{S}$?
For the case of two functions this is a functional variant of the well-known open problem, known as
Mahler's conjecture. More precisely, for $k=2$ we are looking for the lower bound of the functional
$$
\int e^{-V} dx \int e^{-V^*} dy.
$$
It is conjectured  that the minimum is reached,  in particular,  when 
$V(x) = \|x\|_1 = \sum _{i=1}^n |x_i|$ or  $V(x) =\|x\|_\infty = \max_{1 \leq i \leq n} |x_i|$, or their Legendre transform. 
See e.g., M.~Fradelizi and M.~Meyer  \cite{FradeliziMeyer2008}, \cite{FradeliziMeyer2008(2)},  where the conjecture was proved 
in dimension $1$.
\vskip 2mm
\noindent
The natural generalization of this problem for the case of $k>2$ functions however has  a trivial solution.
\vskip 3mm
\noindent
\begin{proposition}
There exist even functions $V_1, V_2, V_3$ such that the triple $(V_1, V_2,V_3)$ satisfies 
\begin{equation}
\label{leg-ass}
V_m(x_m) = \sup_{x_i, i \ne m} \Bigl(  \sum_{i,j=1, i<j}^k \langle x_i , x_j \rangle - \sum_{i \ne m} V_i(x_i)  \Bigr)
\end{equation}
 and  $\mathcal{S}(V_1, V_2, V_3)=0$.
\end{proposition}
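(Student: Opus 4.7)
The plan is to exhibit an explicit triple in dimension $n=1$; the general dimension follows immediately by replacing $x^2$ with $|x|^2$ and the singleton $\{0\}\subset\mathbb{R}$ with $\{0\}\subset\mathbb{R}^n$. My candidate is
$$
V_1(x)=V_2(x)=\tfrac12 x^2, \qquad V_3(x)=\delta_{\{0\}}(x),
$$
where $\delta_{\{0\}}$ denotes the convex indicator of the origin ($0$ at $x=0$, $+\infty$ otherwise). All three functions are even and measurable, and $\int e^{-V_3}\,dx_3=0$, so $\mathcal{S}(V_1,V_2,V_3)=0$ at once. What remains is to check the Legendre-type fixed-point equation \eqref{leg-ass} for every $m\in\{1,2,3\}$.

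For $m=1$ (and by symmetry $m=2$), the finiteness constraint on $V_3$ forces $x_3=0$ in the supremum, so the right-hand side collapses to $\sup_{x_2}\bigl(x_1x_2-\tfrac12 x_2^2\bigr)=\tfrac12 x_1^2=V_1(x_1)$.

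For $m=3$ the verification is the heart of the argument. Introducing $u=x_1+x_2$ and $v=x_1-x_2$, a short computation yields
$$
x_1x_2+x_3(x_1+x_2)-\tfrac12 x_1^2-\tfrac12 x_2^2 \;=\; -\tfrac12 v^2 + x_3\,u.
$$
Taking the supremum over $(u,v)\in\mathbb{R}^2$ gives $0$ when $x_3=0$ and $+\infty$ whenever $x_3\neq 0$, matching $V_3$ pointwise. The decisive feature here is that the quadratic form in $(x_1,x_2)$ is negative \emph{semi-}definite with kernel along the diagonal $x_1=x_2$; this zero eigenvalue is precisely what lets the supremum explode to $+\infty$ for $x_3\neq 0$ while still equaling $V_3(0)=0$ at the origin.

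I do not anticipate any real obstacle: the whole statement reduces to checking an explicit example. The only point requiring care is to confirm that $V_3=\delta_{\{0\}}$ is genuinely admissible (even, measurable, lower semicontinuous and convex, so that the Legendre identity is well-posed), which is immediate. If a ``non-degenerate'' example with everywhere-finite $V_i$ were required, one would have to work harder---but the proposition asks only for existence of a triple with $\mathcal{S}=0$, and a single coordinate-collapsing $V_i$ suffices.
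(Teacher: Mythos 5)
Your proposal is correct and uses essentially the same example as the paper (the paper takes $V_1=\delta_{\{0\}}$ and $V_2=V_3=\tfrac12|x|^2$, i.e.\ your triple with the indices permuted, and leaves the verification to the reader). Your explicit check of the fixed-point relation, including the $u=x_1+x_2$, $v=x_1-x_2$ computation for the indicator marginal, is accurate.
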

\begin{remark}
Assumption (\ref{leg-ass}) seems to be a natural generalization for $k>2$ functions of the condition that two convex functions are related by the Legendre transform. 
\end{remark}
\begin{proof}
The desired functions are
\begin{equation*}
    V_1(x_1) = 
    \begin{cases}
      0 & \text{if } x=0\\
      +\infty & \text{else}
    \end{cases}
\end{equation*}
$$
V_2(x_2) = \frac{|x_2|^2}{2}, \ V_3(x_3) = \frac{|x_3|^2}{2}.
$$
The reader can easily check the claim.
\end{proof}
\vskip 3mm
\noindent
At the end of this section we recall  basic facts on duality relations  for the transportation cost appearing in the theory
of barycenters of measures. 
Recall that for a given  family of probability measures
$\mu_1, \cdots, \mu_k$ and weights $\lambda_i \in [0,1]$ satisfying $\sum_{i=1}^k \lambda_i =1$
its barycenter  $\mu$ is the minimum point 
of the functional 
$$
\mathcal{F}(\nu) = \frac{1}{2} \sum_{i=1}^k \lambda_i W^2_2(\mu_i, \nu).
$$
Here, 
$$
W^2_2(\nu_1, \nu_2) = \inf\left\{ \int|x-y|^2 d P(x,y): P\in \mathcal{P} (\mathbb{R}^n \times \mathbb{R}^n), P(\cdot, \mathbb{R}^n) = \nu_1, P(\mathbb{R}^n, \cdot) = \nu_2\right\}
$$
is the $L^2$ Kantorovich distance of probability measures $\nu_1, \nu_2$.
It is well-known that the barycenter problem is closely related to the multimarginal  (maximization) Kantorovich problem with the cost function
$$
(x_1, \cdots, x_k) \longmapsto \sum_{i,j=1,  i \neq j}^k \lambda_i \lambda_j \langle x_i, x_j \rangle
$$
and marginals $\mu_i$. Let $\pi$ be the solution to this problem, i.e. a measure that gives a maximum to the functional
\begin{equation}
\label{multMK}
P \to  \int \sum_{i,j=1,  i \neq j}^k \lambda_i \lambda_j \langle x_i, x_j \rangle d P
\end{equation}
among the measures on $(\mathbb{R}^n)^k$ having $\mu_1, \dots,  \mu_k$ as marginals.
\vskip 2mm
\noindent
The following facts are collected from {\cite{AguehC}} and \cite{GangboSw}.

\begin{theorem} {\cite{AguehC}, \cite{GangboSw}}
\label{barycenter-th}
Assume that $\mu_i$ are absolutely continuous measures with finite second moments and $\lambda_i \in (0,1)$  are numbers 
satisfying $\sum_{i=1}^k \lambda_i =1$. Then the following facts hold.
\begin{enumerate}
\item There exists a unique absolutely continuous solution $\mu$ to the barycenter problem and a unique solution $\pi$ to the problem (\ref{multMK}).
\item The measure $\mu$ is the push-forward measure of $\pi$ under the mapping $T(x_1, \cdots, x_k) = \sum_{i=1}^k \lambda_i x_i$
and the following relation holds:
$$
 \sum_{i=1}^k \lambda_i W^2_2(\mu_i,\mu)
= \int \sum_{i=1}^k \lambda_i \, |x_i - T(x)|^2 d \pi.
$$
\item
The optimal transportation mappings $\nabla \Phi_i$ of $\mu$ onto $\mu_i$  satisfy
$$
\sum_{i=1}^k \lambda_i \nabla \Phi_i (x) = x
$$
for $\mu$-a.e. $x$.
and $\pi$ is supported on the set $\left\{(\nabla \Phi_1(x), \cdots, \nabla \Phi_k(x)) : x \in \mathbb{R}^n\right\}$.
\item
There exists  a tuple of convex functions $(v_i)$ solving the problem dual to   (\ref{multMK}), which is unique up to addition of constants and modification of sets of zero measure, i.e. a $k$-tuple of functions satisfying 
$$
 \sum_{i=1}^k v_i(x_i) \ge  \sum_{i,j, i \neq j}^k \lambda_i \lambda_j \langle x_i, x_j \rangle 
$$
with equality $\pi$-a.e.
The following relation holds between $v_i$ and $\Phi_i$:
\begin{equation}
\label{phivi}
\Phi^*_i(x_i) = \lambda_i \frac{|x_i|^2}{2} + \frac{v_i(x_i)}{\lambda_i} + C_i
\end{equation}
for $\mu_i$-almost all $x_i$.
\end{enumerate}
\end{theorem}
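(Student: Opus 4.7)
The plan is to treat the four claims in order, using Kantorovich duality for the multimarginal problem as the organizing tool and exploiting the quadratic structure of the cost to link the barycenter side with the multimarginal side.

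\textbf{Step 1 (Existence and uniqueness of $\mu$ and $\pi$).} For the barycenter problem, $\mathcal{F}$ is narrowly lower semicontinuous and coercive, since any minimizing sequence has uniformly bounded second moments and is therefore tight; a direct method yields a minimizer $\mu$. Uniqueness of $\mu$ follows from strict displacement convexity of $\nu \mapsto \frac{1}{2}W_2^2(\mu_i,\nu)$ along McCann geodesics, which is strict when the $\mu_i$ are absolutely continuous. For the multimarginal problem, the set of couplings with prescribed marginals is tight, the cost is continuous, and its integral is bounded above on this set by the prescribed second moments; a maximizer $\pi$ exists. Uniqueness of $\pi$ will follow from Step~3, once its support is shown to lie on a graph.

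\textbf{Step 2 (Identification of the two problems).} The key algebraic fact is the pointwise identity $\sum_i \lambda_i |x_i-y|^2 \ge \sum_i \lambda_i |x_i|^2 - |T(x)|^2$, with equality at $y=T(x):=\sum_i \lambda_i x_i$. Given any $P\in \mathcal{P}(\mu_1,\dots,\mu_k)$, the push-forward $\nu:=T_\# P$ together with the couplings $(T,\mathrm{pr}_i)_\# P$ of $\nu$ and $\mu_i$ yields $2\mathcal{F}(\nu) \le \int[\sum_i\lambda_i|x_i|^2 - |T(x)|^2]\,dP$. Conversely, given any $\nu$ and optimal couplings $\gamma_i$ of $\nu$ with $\mu_i$, disintegration with respect to $\nu$ and gluing produces a $P\in \mathcal{P}(\mu_1,\dots,\mu_k)$ with $2\mathcal{F}(\nu) \ge \int[\sum_i\lambda_i|x_i|^2 - |T(x)|^2]\,dP$. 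Since $\int\sum_i\lambda_i|x_i|^2\,dP$ depends only on the marginals, minimizing $\mathcal{F}$ is equivalent to maximizing $\int |T(x)|^2\,dP$, which equals twice (\ref{multMK}) up to a constant. This establishes (2) and shows $\mu=T_\#\pi$, together with the cost identity.

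\textbf{Step 3 (Monge structure).} Applying the Gangbo--\c{S}wiech analysis to the quadratic multimarginal cost, Kantorovich duality produces convex dual potentials $v_i$ satisfying $\sum_i v_i(x_i) \ge \sum_{i\ne j}\lambda_i\lambda_j\langle x_i,x_j\rangle$, with equality $\pi$-a.e. At points of equality, differentiating in $x_m$ gives $\partial v_m(x_m) \ni \sum_{j\ne m}\lambda_m\lambda_j x_j = \lambda_m(T(x)-\lambda_m x_m)$, so each $x_m$ is determined by the single point $T(x)$ through a monotone, convex-subgradient-type relation. Hence $\mathrm{supp}(\pi)$ lies on a graph over the variable $x:=T(x)$; the $i$-th factor is then forced to be a Brenier map $\nabla\Phi_i$ transporting $\mu$ to $\mu_i$. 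Finally, $x=T(\nabla\Phi_1(x),\dots,\nabla\Phi_k(x))=\sum_i\lambda_i\nabla\Phi_i(x)$ holds $\mu$-a.e.

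\textbf{Step 4 (Relation (\ref{phivi}) between $v_i$ and $\Phi_i$).} On the support of $\pi$ we have $\sum_i v_i(\nabla\Phi_i(x)) = \sum_{i\ne j}\lambda_i\lambda_j \langle\nabla\Phi_i(x),\nabla\Phi_j(x)\rangle$; using $\sum_j\lambda_j\nabla\Phi_j=\mathrm{id}$, the right-hand side collapses to $\sum_i\lambda_i\langle\nabla\Phi_i(x),x\rangle - \sum_i\lambda_i^2|\nabla\Phi_i(x)|^2$. Combining with the Legendre identity $\Phi_i^*(\nabla\Phi_i(x))+\Phi_i(x)=\langle x,\nabla\Phi_i(x)\rangle$ and separating variables in the $i$-th slot (here one uses that the $v_i$ are unique up to additive constants) yields $\Phi_i^*(x_i)=\lambda_i|x_i|^2/2 + v_i(x_i)/\lambda_i + C_i$ $\mu_i$-a.e., which is (\ref{phivi}). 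The main obstacle is rigorously implementing Step~3: one must show that the equality set of the dual inequality is contained in a Lipschitz graph over $T(x)$, which requires $c$-cyclical monotonicity for the multimarginal cost, a.e. differentiability of the convex potentials $v_i$, and the absolute continuity of the marginals to exclude degenerate support sets.
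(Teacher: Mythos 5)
The first thing to note is that the paper itself does not prove Theorem \ref{barycenter-th}: items 1--3 are quoted from \cite{AguehC} and \cite{GangboSw}, and the only argument supplied is the informal remark deriving (\ref{phivi}): since $\pi$ is concentrated on the set where $\sum_i v_i(x_i)$ equals the cost, differentiation gives $\nabla v_i(x_i)=\lambda_i\sum_{j\neq i}\lambda_j x_j$, i.e.\ $\lambda_i x_i+\nabla v_i(x_i)/\lambda_i=T(x)$ $\pi$-a.e.; since $T_\#\pi=\mu$ and the $i$-th marginal of $\pi$ is $\mu_i$, this map pushes $\mu_i$ forward to $\mu$, and being the gradient of the convex function $\lambda_i|x_i|^2/2+v_i(x_i)/\lambda_i$ it must coincide with $\nabla\Phi_i^*$ by uniqueness of the Brenier map, whence (\ref{phivi}). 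Your Steps 2--3 reconstruct the Agueh--Carlier identification of the barycenter problem with (\ref{multMK}) (gluing one way, push-forward under $T$ the other way) and the Gangbo--\'Swiech graph structure exactly as in the cited sources, and your relation $\partial v_m(x_m)\ni\lambda_m(T(x)-\lambda_m x_m)$ is precisely the paper's key identity; so the overall route is the same, only more detailed.

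Two points in your sketch need repair. In Step 1, $\nu\mapsto W_2^2(\mu_i,\nu)$ is \emph{not} strictly convex (nor even convex, in general) along McCann geodesics --- along geodesics it is only semiconcave. Uniqueness of the barycenter should instead be argued via convexity of $\nu\mapsto W_2^2(\mu_i,\nu)$ along linear interpolations $(1-t)\nu_0+t\nu_1$ (or along generalized geodesics based at $\mu_i$), which is strict when $\mu_i$ is absolutely continuous and $\nu_0\neq\nu_1$; this is how \cite{AguehC} proceeds. In Step 4, ``separating variables in the $i$-th slot (here one uses that the $v_i$ are unique up to additive constants)'' does not deliver the conclusion: from a single identity $\sum_i g_i(\nabla\Phi_i(x))=0$ for $\mu$-a.e.\ $x$ one cannot infer that each summand is constant, and uniqueness of the dual potentials is not the relevant input. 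The correct finish is the one already implicit in your Step 3 and made explicit in the paper's remark: the relation $\nabla\bigl(\lambda_i|\cdot|^2/2+v_i/\lambda_i\bigr)(x_i)=T(x)$ $\pi$-a.e.\ shows this gradient map transports $\mu_i$ to $\mu=T_\#\pi$, so by Brenier uniqueness it equals $\nabla\Phi_i^*$ $\mu_i$-a.e., and (\ref{phivi}) follows by integrating the gradient identity (the constant $C_i$ coming from connectedness of the domain); here the a.e.\ differentiability of the convex $v_i$ and the absolute continuity of $\mu_i$, which you correctly flag at the end, are what make the argument legitimate. With these two repairs your outline matches the proofs the theorem is quoted from.
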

\vskip 2mm
\noindent
\begin{remark}
The results of item 1. are obtained in Section 3 of \cite{AguehC}, item 2. is contained in Proposition 4.2 \cite{AguehC}, item 3. corresponds to Proposition 3.8 of \cite{AguehC}. Formula (\ref{phivi}) needs some explanations.
It corresponds to formula (4.8) in \cite{AguehC}, but  in the presentation in \cite{AguehC} there is no direct link to the optimal transportation of the barycenter
$\mu$ onto   $\mu_i$. Let us give some informal explanations.
\par
\noindent
By the Kantorovich duality $\pi$ is concentrated on the zero set of the positive function
$$
 \sum_{i=1}^k v_i(x_i) - \sum_{i,j, i \neq j}^k \lambda_i \lambda_j \langle x_i, x_j \rangle. 
$$ 
Thus,  for $\pi$-a.e. $(x_1, \cdots, x_k)$  and all $1 \le i \le k$ one has
$
\nabla v_i(x_i) = \sum_{j \ne i }^k \lambda_i \lambda_j x_j. 
$
Equivalently,
\begin{equation}
\label{lvl}
\lambda_i x_i + \frac{\nabla v_i(x_i)}{\lambda_i} = \sum_{j = 1 }^k \lambda_j x_j,  \hskip 3mm \pi-a.e.
\end{equation}
It remains to note that $\mu$ is the image of $\pi$ under $T = \sum_{j = 1 }^k \lambda_j x_j $
and $\mu_i$ is the projection of $\pi$ onto the $i$-th factor. Thus relation  (\ref{lvl}) immediately implies that
$\mu$ is the image of $\mu_i$ under the mapping  $x_i \to \lambda_i x_i + \frac{\nabla v_i(x_i)}{\lambda_i}$.
Since the latter is the gradient of  the convex function $\lambda_i \frac{|x_i|^2}{2} + \frac{v_i(x_i)}{\lambda_i}$, we conclude
by uniqueness of the optimal transportation mapping  that $\nabla \Phi^*_i = \lambda_i x_i + \frac{\nabla v_i(x_i)}{\lambda_i}$.
\end{remark}
\vskip 3mm

\section{The unconditional case} \label{UCC}

\vskip 2mm
\noindent
In this section we verify our conjecture (inequality part) for the unconditional functions.
A function $f: \mathbb{R}^n \to \mathbb{R}$ is called unconditional, if 
$$
f(\varepsilon_1 x_1, \cdots, \varepsilon_n x_n) = f( x_1,  x_2, \ldots, x_n), 
$$
for every $(\varepsilon_1, \cdots, \varepsilon_n) \in \{-1,1\}^n$ and every $(x_1, \cdots, x_n) \in \mathbb{R}^n$.
\vskip 2mm
\noindent
\begin{theorem}
\label{BSunc}
Let $f_i \colon \mathbb{R}^n \to \mathbb{R}_{+}$, $1 \le i \le k$, be  measurable  unconditional integrable functions satisfying 
$$
\prod_{i=1}^k f_i(x_i)\le  \rho \left( \sum_{i, j=1, i < j}^{k} \langle x_i, x_j \rangle \right) \hskip 2mm \text{for every} \hskip 2mm x_i, x_j \in \mathbb{R}^n_+,  
$$
where $\rho$ is a  positive non-increasing function on $[0,\infty)$ such that $\int_{\mathbb{R}}\rho^{\frac{1}{k}}(t^2) dt <\infty$.  Then
\begin{equation} \label{BSunc-inequality}
 \prod_{i=1}^k \int_{\mathbb{R}^n} f_i(x_i) dx_i
\le \left( \int_{\mathbb{R}^{n}}  \rho^{\frac{1}{k}} \Bigl( \frac{k(k-1)}{2} \, |u|^2 \Bigr) du\right)^k.
\end{equation}
In particular, if  
$$
\prod_{i=1}^k f_i(x_i)\le  e^{- \alpha \sum_{i, j=1, i < j}^{k} \langle x_i, x_j \rangle }, \ \alpha \in \mathbb{R}_{+},
$$
then
\begin{equation*}
\label{fghexp}
 \prod_{i=1}^k \int_{\mathbb{R}^n} f_i(x_i) dx_i
\le \Bigl( \int_{\mathbb{R}^{n}}  e^{- \alpha \frac{k-1}{2}|u|^2} du\Bigr)^k.
\end{equation*}
\end{theorem}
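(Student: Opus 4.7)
The strategy is to use unconditionality to pass to the positive orthant, make an exponential change of variables, reduce the hypothesis to a Prékopa--Leindler type pointwise inequality through AM--GM, and finally apply the multi-function Prékopa--Leindler inequality.

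Since each $f_i$ is unconditional, $\int_{\mathbb{R}^n} f_i(x_i)\,dx_i = 2^n \int_{\mathbb{R}^n_+} f_i(x_i)\,dx_i$, so it suffices to bound the product of integrals on $\mathbb{R}^n_+$. On $\mathbb{R}^n_+$ I introduce logarithmic coordinates $x_{i,\ell}=e^{t_{i,\ell}}$, so $dx_i = e^{t_{i,1}+\cdots+t_{i,n}}\,dt_i$; define
\[
g_i(t_i) \;=\; f_i\bigl(e^{t_{i,1}},\ldots,e^{t_{i,n}}\bigr)\, e^{\sum_\ell t_{i,\ell}}, \qquad t_i\in \mathbb{R}^n,
\]
so that $\int_{\mathbb{R}^n_+} f_i\,dx_i = \int_{\mathbb{R}^n} g_i\,dt_i$.

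Next I translate the hypothesis. For $x_i=(e^{t_{i,\ell}})_\ell$ one has $\langle x_i,x_j\rangle = \sum_\ell e^{t_{i,\ell}+t_{j,\ell}}$. Setting $s_\ell = \tfrac{1}{k}\sum_{i=1}^k t_{i,\ell}$, the convexity of the exponential (equivalently AM--GM applied to the $\binom{k}{2}$ terms in each coordinate) yields
\[
\sum_{i<j} e^{t_{i,\ell}+t_{j,\ell}} \;\geq\; \binom{k}{2}\, e^{\tfrac{2}{k}\sum_i t_{i,\ell}} \;=\; \frac{k(k-1)}{2}\, e^{2 s_\ell}.
\]
Summing over $\ell$ and using that $\rho$ is non-increasing, the assumption gives
\[
\prod_{i=1}^k f_i(e^{t_i}) \;\leq\; \rho\Bigl(\tfrac{k(k-1)}{2}\textstyle\sum_\ell e^{2 s_\ell}\Bigr).
\]
Multiplying both sides by $e^{k\sum_\ell s_\ell} = \prod_i e^{\sum_\ell t_{i,\ell}}$ and taking $k$-th roots,
\[
\prod_{i=1}^k g_i(t_i)^{1/k} \;\leq\; H(s)^{1/k}, \qquad\text{where}\qquad H(s) \;=\; e^{k\sum_\ell s_\ell}\,\rho\Bigl(\tfrac{k(k-1)}{2}\textstyle\sum_\ell e^{2 s_\ell}\Bigr).
\]
This is exactly the Prékopa--Leindler hypothesis for the $k+1$ functions $\{g_i\}_{i=1}^k$ and $H^{1/k}$ at the barycentric point $s = \tfrac{1}{k}\sum_i t_i$, with equal weights $\lambda_i=1/k$.

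Applying the multi-function Prékopa--Leindler inequality,
\[
\int_{\mathbb{R}^n} H^{1/k}(s)\,ds \;\geq\; \prod_{i=1}^k \Bigl(\int_{\mathbb{R}^n} g_i(t_i)\,dt_i\Bigr)^{1/k}.
\]
Finally I undo the change of variables on the left: with $u_\ell = e^{s_\ell}$, $du_\ell = e^{s_\ell}\,ds_\ell$, the integral becomes $\int_{\mathbb{R}^n_+}\rho^{1/k}\!\bigl(\tfrac{k(k-1)}{2}|u|^2\bigr)\,du$, which by the radial symmetry of the integrand equals $2^{-n}\int_{\mathbb{R}^n}\rho^{1/k}\!\bigl(\tfrac{k(k-1)}{2}|u|^2\bigr)\,du$. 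Raising to the $k$-th power and combining with the factor $2^n$ from unconditionality on each side yields the desired inequality; the factors $2^{nk}$ cancel exactly.

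The main technical step is the AM--GM reduction of the cost $\sum_{i<j}\langle x_i,x_j\rangle$ to a function of the coordinate-wise geometric mean $s=\tfrac{1}{k}\sum t_i$, which is precisely the barycenter at which Prékopa--Leindler is read. The exponential change of variables is the reason this reduction is available in the unconditional case; without unconditionality one cannot restrict to $\mathbb{R}^n_+$ and the AM--GM bound fails. The special case $\rho(t)=e^{-\alpha t}$ then follows by direct substitution and a Gaussian computation.
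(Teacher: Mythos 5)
Your proposal is correct and follows essentially the same route as the paper's proof: reduction to the positive orthant by unconditionality, the exponential change of variables, the AM--GM estimate $\sum_{i<j}e^{t_{i,\ell}+t_{j,\ell}}\ge \frac{k(k-1)}{2}e^{2s_\ell}$ combined with the monotonicity of $\rho$, the Pr\'ekopa--Leindler inequality with equal weights $\frac1k$, and the final substitution $u_\ell=e^{s_\ell}$. The only cosmetic differences are that you package the majorant as an explicit function $H^{1/k}$ rather than the sup-convolution form of Pr\'ekopa--Leindler and track the $2^n$ factors explicitly, which changes nothing of substance.
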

\vskip 2mm
\noindent
\begin{proof}
Clearly, for unconditional functions it is sufficient to check that
$$
 \prod_{i=1}^k \int_{\mathbb{R}^n_{+}} f_i(x_i) dx_i
\le \left[ \int_{\mathbb{R}^{n}_{+}}  \rho^{\frac{1}{k}} \left( \frac{k(k-1)}{2}\,  |u|^2  \right) du \right]^k, 
$$
provided that  on $\mathbb{R}^{n}_{+}$, 
$$
\prod_{i=1}^k f_i(x_i)\le  \rho \left( \sum_{i, j=1, i < j}^{k} \langle x_i, x_j \rangle \right).
$$
We prove this using  the  Pr\'ekopa--Leindler inequality and  a trick involving a  change of variables formula (see, for instance, \cite{Gardner2002} or \cite{Lehec}, Lemma 5). 
\newline 
For $u = (u_1, \cdots, u_n)$, we denote $e^{u} = (e^{u_1}, \cdots, e^{u_n})$.
We apply  the change of variables formula
$$
x_i = e^{t_i}, \ \ t_i\in \mathbb{R}^n,
$$
and get
$$
 \prod_{i=1}^k \int_{\mathbb{R}^n_{+}} f_i(x_i) dx_i
 =
  \prod_{i=1}^k \int_{\mathbb{R}^n} f_i(e^{t_i}) e^{\sum_{m=1}^n (t_i)_m } dt_i,
$$
where we write  $t_i = \left((t_i)_1, (t_i)_2, \cdots,  (t_i)_n\right)$.
Next we apply the  Pr\'ekopa--Leindler inequality (see, e.g., \cite{Gardner2002}, formula (21) or (27)), 
$$
\prod_{i=1}^k \Bigl( \int_{\mathbb{R}^n} g_i dt_i \Bigr)^{\frac{1}{k}} \le 
 \int_{\mathbb{R}^n} \sup_{t = \frac{1}{k} \sum_{i=1}^{k} t_i} \prod_{i=1}^k  g^{\frac{1}{k}}_i(t_i) dt.
$$
After the change of  variables and the application of the Pr\'ekopa--Leindler inequality, we use the assumptions of the theorem
in the second inequality below.  We 
also use the arithmetic-geometric mean inequality and the fact that $\rho$ is non-increasing in the third inequality below.
We get
\begin{align*}
&
\Bigl( \prod_{i=1}^k \int_{\mathbb{R}^n_{+}} f_i(x_i) dx_i \Bigr)^{\frac{1}{k}}
\le \int_{\mathbb{R}^n} \sup_{t = \frac{1}{k} \sum_{i=1}^k t_i}   \prod_{i=1}^k  \Bigl( f^{\frac{1}{k}}_i(e^{t_i}) e^{\frac{1}{k} \sum_{m=1}^n (t_i)_m } \Bigr) dt
\\&
\le \int_{\mathbb{R}^n}  \sup_{t = \frac{1}{k} \sum_{i=1}^k t_i}  \Bigl[ \rho^{\frac{1}{k}} \Bigl(  \sum_{i,j =1, i < j}^{k} \sum_{m=1}^{n} e^{(t_i + t_j)_m} \Bigr) e^{\frac{1}{k} \sum_{i=1}^k \sum_{m=1}^n (t_i)_m } \Bigr] dt
\\&
= \int_{\mathbb{R}^n}  \sup_{t = \frac{1}{k} \sum_{i=1}^k t_i}  \Bigl[ \rho^{\frac{1}{k}} \Bigl(  \sum_{i,j =1, i < j}^{k} \sum_{m=1}^{n}    e^{(t_i + t_j)_m} \Bigr) \Bigr]
e^{\sum_{m=1}^n (t)_m } dt
\\&
\le \int_{\mathbb{R}^n}  \sup_{t = \frac{1}{k} \sum_{i=1}^k t_i}  \Bigl[ \rho^{\frac{1}{k}} \Bigl( \sum_{m=1}^{n}  \frac{k(k-1)}{2}   e^{  \frac{2}{k(k-1)} \sum_{i,j=1, i < j}^{k}(t_i + t_j)_m} \Bigr) \Bigr]
e^{\sum_{m=1}^n (t)_m } dt
\\&
= \int_{\mathbb{R}^n}  \sup_{t = \frac{1}{k} \sum_{i=1}^k t_i}  \Bigl[ \rho^{\frac{1}{k}} \Bigl(  \frac{k(k-1)}{2} \sum_{m=1}^{n}     e^{  \frac{2}{k} \sum_{i=1}^k (t_i)_m} \Bigr) \Bigr]
e^{\sum_{m=1}^n (t)_m } dt
 \\&
= \int_{\mathbb{R}^n}  \rho^{\frac{1}{k}}\Bigl( \frac{k(k-1)}{2} \sum_{m=1}^{n}     e^{ 2 (t)_m}\Bigr) 
e^{\sum_{m=1}^n (t)_m } dt.
\end{align*}
Changing variables $u_m = e^{(t)_m}$ one gets
$$
\Bigl( \prod_{i=1}^k \int_{\mathbb{R}^n_{+}} f_i(x_i) dx_i \Bigr)^{\frac{1}{k}}
\le \int_{\mathbb{R}^n_{+}}  \rho^{\frac{1}{k}} \Bigl( \frac{k(k-1)}{2} \sum_{m=1}^{n}     u^2_m \Bigr) du = 
\int_{\mathbb{R}^n_{+}}  \rho^{\frac{1}{k}} \Bigl( \frac{k(k-1)}{2}\,  |u|^2 \Bigr) du . 
$$
\end{proof}
\vskip 3mm
\noindent
The above theorem and the affine isoperimetric inequalities of  affine surface area for log-concave functions of \cite{CFGLSW}  lead to 
multi-functional affine isoperimetric inequalities for log-concave functions.
\par
\noindent
We first recall that for $\lambda \in \mathbb{R}$, the  $\lambda$-affine surface area 
of a convex  function $V$ was introduced in \cite{CFGLSW} as 
\begin{equation} \label{asa}
as_\lambda(V) =  \int_{\Omega_V}e^{(2\lambda-1)V(x)-\lambda \langle x, \nabla V (x)\rangle} \left(\det \, D^2V (x)\right)^\lambda dx, 
\end{equation}
where $\Omega_V=\text{int} \, (\{x \in \mathbb{R}^n: V(x) < +\infty\} )$ is  the interior of the convex domain of $V$ and 
$ D^2V$ is the Hessian of $V$.  
The gradient of $V$, denoted  by $\nabla V$, exists almost everywhere by Rademacher's theorem (see, e.g., \cite{Rademacher}),   and a theorem of Alexandrov \cite{Alexandroff} and Busemann and Feller \cite{Buse-Feller} guarantees the existence of the Hessian, denoted  by
$D^2 V$, almost everywhere in $\Omega_V$.
\vskip 2mm
\noindent
In the next theorem we collect several results  that were  shown in \cite{CFGLSW}.
\begin{theorem} \label{Th:asa} \cite{CFGLSW} Let $V: \mathbb{R}^n \to \mathbb{R} \cup \{\infty\}$ be convex. 
\vskip 2mm
\noindent
(i) \,  For any linear invertible map $A$ on $\mathbb{R}^n$, 
\hskip 5mm $as_\lambda(V \circ A) =  |\det A|^{2\lambda-1} as_\lambda (V)$.
\vskip 2mm
\noindent
(ii) \,  For all $\lambda \in \mathbb{R}$, \hskip 5mm 
$ as_\lambda(V)=as_{1-\lambda}(V^*)$. 
\vskip 2mm
\noindent
(iii)  \hskip 1mm  $as_{\frac{1}{2}} (V)  \leq  \left(\int e^{-V} dx \right)^\frac{1}{2}  \left( \int e^{-V^*} dx \right)^\frac{1}{2}$.
\vskip 2mm
\noindent
(iv) Let $V$ in addition be such that $\int _{\mathbb{R}^n} xe^{-V(x)}dx=0$, and let  $\lambda \in [0,1]$. Then
\begin{equation} \label{f-asa} 
as_\lambda(V)\leq (2\pi)^{n\lambda}\left(\int_{\mathbb{R}^n} e^{-V} dx \right)^{1-2\lambda},  
\end{equation}
and  equality holds for $\lambda\neq 0$, if and only if there exists $a\in\mathbb{R}$ and a positive definite matrix $A$ such that $V(x)=\langle Ax,x\rangle+a$, for every $x\in\mathbb{R}^n$. For $\lambda=0$, equality holds trivially.
\end{theorem}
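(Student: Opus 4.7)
The plan is to handle the four parts separately, with a single workhorse: the change of variables $y = \nabla V(x)$ combined with the Fenchel identity $V^*(\nabla V(x)) = \langle x, \nabla V(x)\rangle - V(x)$, valid almost everywhere on the set where $V$ is Alexandrov twice-differentiable.

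For (i), I would set $W = V\circ A$, compute $\nabla W(x) = A^\top \nabla V(Ax)$ and $D^2 W(x) = A^\top D^2 V(Ax)\, A$, plug into the definition \eqref{asa}, and change variables $y = Ax$. The exponent handles itself via $\langle x, A^\top \nabla V(Ax)\rangle = \langle y, \nabla V(y)\rangle$, the Hessian determinant produces $(\det A)^{2\lambda}$, and the Jacobian contributes $|\det A|^{-1}$, yielding the factor $|\det A|^{2\lambda-1}$. For (ii), I would change variables $y = \nabla V(x)$ in $as_{1-\lambda}(V^*)$ and use $\nabla V^*(\nabla V(x)) = x$, $D^2 V^*(\nabla V(x)) = (D^2 V(x))^{-1}$, $dy = \det D^2 V(x)\,dx$, and the Fenchel identity above. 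After cancellation the exponent collapses to $(2\lambda-1)V(x) - \lambda\langle x, \nabla V(x)\rangle$ and the Hessian factor becomes $(\det D^2 V)^\lambda$, recovering $as_\lambda(V)$.

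The key rearrangement for (iii) and (iv) is that, by the Fenchel identity, the integrand of $as_\lambda(V)$ can be rewritten as
$$
(e^{-V(x)})^{1-\lambda}\cdot\bigl(e^{-V^*(\nabla V(x))}\det D^2 V(x)\bigr)^{\lambda}.
$$
For (iii), apply Cauchy--Schwarz at $\lambda=\tfrac12$ and, in the second factor, change variables $y = \nabla V(x)$ so $\int e^{-V^*(\nabla V)}\det D^2 V\,dx = \int e^{-V^*}dy$. For (iv), apply H\"older's inequality with conjugate exponents $1/(1-\lambda)$ and $1/\lambda$ to get
$$
as_\lambda(V) \le \Bigl(\int e^{-V}dx\Bigr)^{1-\lambda}\Bigl(\int e^{-V^*}dy\Bigr)^{\lambda},
$$
then invoke the functional Blaschke--Santal\'o inequality (permissible because of the centering hypothesis $\int x e^{-V}dx = 0$) to bound $\int e^{-V^*}dy \le (2\pi)^n / \int e^{-V}dx$. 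Substituting gives \eqref{f-asa}. The equality case for $\lambda\neq 0$ tracks the equality cases of H\"older (forcing $e^{-V}$ proportional to $e^{-V^*(\nabla V)}\det D^2 V$, i.e.\ a log-affine Monge--Amp\`ere condition $V + V^*(\nabla V) - \log\det D^2 V \equiv \text{const}$) and of Blaschke--Santal\'o (forcing $V$ centered quadratic); both collapse to $V(x) = \langle Ax,x\rangle + a$ with $A$ positive definite.

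The main obstacle is the rigorous handling of the change of variables $y = \nabla V(x)$ when $V$ is only convex: the gradient exists only almost everywhere (Rademacher) and the Hessian only in the Alexandrov sense (Busemann--Feller). One must restrict the integrals to the set of Alexandrov twice-differentiability, verify that $\nabla V$ restricted to that set is essentially injective with Jacobian $\det D^2 V$ (the classical Brenier/McCann transport formulation takes care of this), and justify that the integral defining $as_\lambda$ is unaffected by the exceptional null set. This step, standard in convex analysis, is precisely the technical ingredient carried out in \cite{CFGLSW}.
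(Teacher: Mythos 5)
The paper does not prove Theorem \ref{Th:asa} at all: it is imported verbatim from \cite{CFGLSW} (``we collect several results that were shown in \cite{CFGLSW}''), so there is no in-paper argument to compare against. Your sketch is essentially the proof given in that reference: the Fenchel rewriting of the integrand as $(e^{-V})^{1-\lambda}\bigl(e^{-V^*(\nabla V)}\det D^2V\bigr)^{\lambda}$, H\"older (Cauchy--Schwarz at $\lambda=\tfrac12$), the gradient-map change of variables, and the functional Blaschke--Santal\'o inequality under the centering hypothesis for (iv); this is correct, and you rightly flag that for merely convex $V$ the change of variables only gives $\int e^{-V^*(\nabla V)}\det D^2_a V\,dx\le\int e^{-V^*}\,dy$, which suffices for (iii) and (iv), while upgrading it to the exact duality (ii) is where the technical work of \cite{CFGLSW} lies. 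Two small points: the H\"older exponents $1/(1-\lambda),\,1/\lambda$ require $\lambda\in(0,1)$, so the endpoints $\lambda=0,1$ of (iv) need the (trivial) separate remark; and your parenthetical equality condition has a sign slip --- proportionality of $e^{-V}$ and $e^{-V^*(\nabla V)}\det D^2V$ reads $-V+V^*(\nabla V)-\log\det D^2V\equiv\mathrm{const}$, not $V+V^*(\nabla V)-\log\det D^2V\equiv\mathrm{const}$ --- though this does not matter for the conclusion, since the equality case of the functional Santal\'o inequality already forces $V$ to be a centered positive definite quadratic plus a constant.
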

\par
\noindent
{\bf Remark.} Theorem \ref{Th:asa} (iii) is just a special case for $\lambda=\frac{1}{2}$ of a more general statement proved in 
\cite{CFGLSW}.
\vskip 3mm
\noindent
We then get the following Proposition.
 \begin{proposition}\label{cor-f-asa}
 Let $V_i \colon \mathbb{R}^n \to \mathbb{R} \cup \{\infty\}$, $1 \le i \le k$, be convex unconditional functions and let $\rho$ be a  positive non-increasing 
function  on $[0,\infty)$ such that $\int_{\mathbb{R}}\rho^{\frac{1}{k}}(t^2) dt <\infty$.
\vskip 2mm
\noindent
(i) Let $\lambda \in [0,\frac{1}{2}]$ and suppose the $V_i$ 
satisfy
$$
\prod_{i=1}^k e^{ -V_i(x_i)}\le  \rho \left( \sum_{i, j=1, i < j}^{k} \langle x_i, x_j \rangle \right), \hskip 2mm \text{for every} \hskip 2mm x_i, x_j \in \mathbb{R}^n_+ .
$$
Then
\begin{equation} \label{multi-asa1}
 \prod_{i=1}^k as_\lambda (V_i) \leq (2\pi)^{k n\lambda} \, 
 \left( \int_{\mathbb{R}^{n}}  \rho^{\frac{1}{k}} \Bigl( \frac{k(k-1)}{2} \, |u|^2 \Bigr) du\right)^{k(1-2 \lambda)}.
\end{equation}
\par
\noindent
In particular, if  $\rho(t) = e^{-\frac{t}{k-1}}$, then 
\begin{equation}\label{multi-asa2}
 \prod_{i=1}^k as_\lambda (V_i) \leq  \left(as_\lambda \left(\frac {|\cdot |^2}{2}\right)\right)^k.
\end{equation}
\vskip 2mm
\noindent
(ii) Let $\lambda \in [\frac{1}{2}, 1]$ and suppose the $V_i$ 
are such that 
$$
\prod_{i=1}^k e^{ -V_i^*(x_i)}\le  \rho \left( \sum_{i, j=1, i < j}^{k} \langle x_i, x_j \rangle \right), \hskip 2mm \text{for every} \hskip 2mm x_i, x_j \in \mathbb{R}^n_+ .
$$
Then
\begin{equation} \label{multi-asa3}
 \prod_{i=1}^k as_\lambda (V_i) \leq (2\pi)^{k n(1-\lambda)}  \,  
 \left( \int_{\mathbb{R}^{n}}  \rho^{\frac{1}{k}} \Bigl( \frac{k(k-1)}{2} \, |u|^2 \Bigr) du\right)^{k(2 \lambda-1)}.
 \end{equation}
\par
\noindent
And again,  if  $\rho(t) = e^{-\frac{t}{k-1}}$, then 
\begin{equation}\label{multi-asa4}
 \prod_{i=1}^k as_\lambda (V_i) \leq   \left(as_\lambda \left(\frac {|\cdot |^2}{2}\right)\right)^k.
\end{equation}

\end{proposition}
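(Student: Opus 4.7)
The plan is to assemble the proposition by combining Theorem \ref{BSunc} (the multi-marginal Blaschke--Santal\'o bound) with the single-function affine isoperimetric inequalities of Theorem \ref{Th:asa}. The key observation that makes things go through is that an unconditional function is in particular even, so $\int x\, e^{-V_i(x)}\,dx = 0$ automatically, which supplies the centering hypothesis needed in Theorem \ref{Th:asa}(iv). Moreover, $V_i$ unconditional implies $V_i^*$ unconditional, which will let us dualize in part (ii).

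For part (i), I would start by applying Theorem \ref{Th:asa}(iv) to each $V_i$ with the given $\lambda \in [0, \tfrac12]$, obtaining
\begin{equation*}
as_\lambda(V_i) \le (2\pi)^{n\lambda}\Bigl(\int_{\mathbb{R}^n} e^{-V_i}\,dx_i\Bigr)^{1-2\lambda}.
\end{equation*}
Multiplying these $k$ inequalities and using that $1-2\lambda \ge 0$, I get
\begin{equation*}
\prod_{i=1}^k as_\lambda(V_i) \le (2\pi)^{kn\lambda}\Bigl(\prod_{i=1}^k\int_{\mathbb{R}^n} e^{-V_i}\,dx_i\Bigr)^{1-2\lambda}.
\end{equation*}
Since the $V_i$ satisfy the hypothesis of Theorem \ref{BSunc} with $f_i = e^{-V_i}$, that theorem bounds the product of integrals by $\bigl(\int_{\mathbb{R}^n}\rho^{1/k}(\tfrac{k(k-1)}{2}|u|^2)\,du\bigr)^k$, and raising to the power $1-2\lambda \ge 0$ yields (\ref{multi-asa1}).

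For part (ii), I would exploit the Legendre symmetry $as_\lambda(V_i) = as_{1-\lambda}(V_i^*)$ from Theorem \ref{Th:asa}(ii). Since $\lambda \in [\tfrac12, 1]$ gives $1-\lambda \in [0,\tfrac12]$, and the tuple $(V_1^*, \ldots, V_k^*)$ consists of unconditional functions satisfying the Blaschke--Santal\'o-type pointwise hypothesis by assumption, I can apply part (i) directly to the $V_i^*$ with parameter $1-\lambda$. This produces (\ref{multi-asa3}) after substituting $\lambda \mapsto 1-\lambda$ in the exponents.

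Finally, for the corollaries (\ref{multi-asa2}) and (\ref{multi-asa4}) with $\rho(t) = e^{-t/(k-1)}$, the integral $\int_{\mathbb{R}^n}\rho^{1/k}(\tfrac{k(k-1)}{2}|u|^2)\,du$ collapses to the Gaussian integral $\int e^{-|u|^2/2}\,du = (2\pi)^{n/2}$. A direct computation from definition (\ref{asa}) applied to $V(x) = |x|^2/2$ (where $\nabla V = x$, $D^2V = I$) shows $as_\lambda(|\cdot|^2/2) = (2\pi)^{n/2}$ for every $\lambda$, so the exponents in both (\ref{multi-asa1}) and (\ref{multi-asa3}) recombine to $(2\pi)^{kn/2} = (as_\lambda(|\cdot|^2/2))^k$. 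There is no substantial obstacle here; the only subtlety worth flagging is the sign condition on the exponent $1-2\lambda$ (resp.\ $2\lambda-1$), which is precisely why the hypotheses split the cases $\lambda \in [0,\tfrac12]$ and $\lambda \in [\tfrac12,1]$ and why the dual hypothesis is imposed on $V_i^*$ in part (ii).
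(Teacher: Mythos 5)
Your proposal is correct and follows essentially the same route as the paper: part (i) by combining Theorem \ref{Th:asa}(iv) (applicable since unconditional functions are even, hence centered) with Theorem \ref{BSunc}, and part (ii) by the duality $as_\lambda(V_i)=as_{1-\lambda}(V_i^*)$ reducing to the case $1-\lambda\in[0,\tfrac12]$ for the unconditional conjugates $V_i^*$; the closing computation $as_\lambda\bigl(\tfrac{|\cdot|^2}{2}\bigr)=(2\pi)^{n/2}$ and the collapse of the $\rho$-integral for $\rho(t)=e^{-t/(k-1)}$ likewise match the paper's argument.
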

\vskip 2mm
\noindent
\begin{proof} 
(i) We get immediately from Theorem \ref{BSunc} and inequality (\ref{f-asa}) that for $\lambda \in [0, \frac{1}{2}]$, 
\begin{eqnarray*} 
 \prod_{i=1}^k as_\lambda (V_i)  \leq  (2\pi)^{k n\lambda}  \,  \left( \prod_{i=1}^k  \int_{} e^{-V_i} \right)^{(1-2\lambda)} \leq (2\pi)^{k n\lambda}  \,  
 \left( \int_{\mathbb{R}^{n}}  \rho^{\frac{1}{k}} \Bigl( \frac{k(k-1)}{2} \, |u|^2 \Bigr) du\right)^{k(1-2 \lambda)}.
\end{eqnarray*}
If $\rho(t) = e^{-\frac{t}{k-1}}$, then 
$$
(2\pi)^{k n\lambda}  \,  
 \left( \int_{\mathbb{R}^{n}}  \rho^{\frac{1}{k}} \Bigl( \frac{k(k-1)}{2} \, |u|^2 \Bigr) du\right)^{k(1-2 \lambda)} = (2\pi)^{\frac{k n}{2}} =  \left(as_\lambda \left(\frac {|\cdot |^2}{2}\right)\right)^k,
 $$
which shows the second part of (i).
\vskip 2mm
\noindent
(ii) We use Theorem \ref{Th:asa} (iii) and (iv) and Theorem \ref{BSunc} and get that for $\lambda \in [\frac{1}{2}, 1]$,
\begin{eqnarray*} 
 \prod_{i=1}^k as_\lambda (V_i) &=&  \prod_{i=1}^k as_{1-\lambda} (V_i^*)  \leq  (2\pi)^{ n(1-\lambda)}  \,  \left( \prod_{i=1}^k  \int_{} e^{-V_i^*} \right)^{k(2\lambda -1)} \\
& \leq & (2\pi)^{k n(1-\lambda)}  \,  
 \left( \int_{\mathbb{R}^{n}}  \rho^{\frac{1}{k}} \Bigl( \frac{k(k-1)}{2} \, |u|^2 \Bigr) du\right)^{k(2 \lambda-1)}.
\end{eqnarray*}
The second part for $\rho(t) = e^{-\frac{t}{k-1}}$ follows.
\end{proof}
\vskip 2mm
\noindent
\begin{remark}
\par
\noindent
(i) Please note that for $\lambda=0$, inequalities (\ref{multi-asa1}) and (\ref{multi-asa2}) are just the inequalities of Theorem \ref{BSunc}. For $\lambda = \frac{1}{2}$, 
we do not need that the $V_i$ are unconditional and  the inequalities are just the inequalities of  (\ref{f-asa}),  
$$
 \prod_{i=1}^k as_{\frac{1}{2}}(V_i) \leq (2\pi)^{k n\lambda}.
$$
See also Section \ref{monotone} for more on $ as_{\frac{1}{2}}$.
\par
\noindent
(ii) For $\lambda >1$, we get an estimate from below with an absolute constant $c$, see \cite{CFGLSW}, 
$$
\prod_{i=1}^k as_\lambda (V_i) \geq c^{k n\lambda} \, 
 \left( \int_{\mathbb{R}^{n}}  \rho^{\frac{1}{k}} \Bigl( \frac{k(k-1)}{2} \, |u|^2 \Bigr) du\right)^{k(1-2 \lambda)}.
$$
\end{remark}

\section{Characterization of the equality cases} \label{equality}

In the proof of Theorem \ref{BSunc}  we have used the  Pr\'ekopa--Leindler inequality which is a particular case of the more general
 Brascamp--Lieb inequality (see \cite{BrascL}, \cite{Barthe}). To analyze the equality case we need  
the equality characterizations of the  Pr\'ekopa--Leindler inequality.
We could not find those in the  literature, except in the case of two functions, established by Dubuc \cite{Dubuc}.
 We  therefore give  a proof of  the equality characterization.

\vskip 3mm

\begin{theorem} [Pr\'ekopa--Leindler]
Let $f_i$, $1 \leq i \leq k$,  and $h$   be nonnegative integrable real functions on $\mathbb{R}^n$
such that for all $x_i$ and for all $\lambda_i \geq 0$, $ 1 \leq i \leq k$, with $\sum_{i=1}^k \lambda_i=1$, 
$$
h \left( \sum_{i=1}^{k} \lambda _i x_i \right) \geq   \prod_{i=1}^k  f^{\lambda_i}_i(x_i).
$$
Then
\begin{equation}\label{PrekopaLeindler}
\prod_{i=1}^k \Bigl( \int_{\mathbb{R}^n} f_i dx_i \Bigr)^{\lambda_i} \le 
 \int_{\mathbb{R}^n} h\,  dx.
\end{equation}
Equality holds in the Pr\'ekopa--Leindler inequality  if and only if there exist vectors $y_1, \cdots, y_k$ in $\mathbb{R}^n$ such that, 
after modification  on a set of measure zero,  the functions $f_i$  satisfy
\begin{equation} \label{Gleichung0}
\frac{f_1(x -y_1)}{\int _{\mathbb{R}^n} f_1 dx} = \frac{f_2( x -y_2)}{\int _{\mathbb{R}^n} f_2 dx} = \cdots \frac{f_k( x-y_k)}{\int _{\mathbb{R}^n} f_k dx} = e^{-\psi(x)},
\end{equation}
where $\psi$ is a convex function such that  $\int _{\mathbb{R}^n } e^{-\psi(x)} dx =1$.
 In addition, after modification on a set of zero measure, the function $h$ can be chosen to satisfy
$$h(x)= \sup_{x = \sum_{i=1}^{k} \lambda_i\, x_i} \prod_{i=1}^k  f^{\lambda_i}_i(x_i) = \prod_{i=1}^k  \Bigl( \int_{\mathbb{R}^n} f_i dx_i  \Bigr)^{\lambda_i}
e^{-\psi\bigl(x + \sum_{i=1}^k  \lambda_i y_i\bigr)}
$$
 for all $x$.
\end{theorem}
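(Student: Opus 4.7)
The inequality (\ref{PrekopaLeindler}) is the classical Pr\'ekopa--Leindler inequality, and the standard proof by induction on $k$ reduces it to the two-function case (itself proved by induction on the dimension $n$, via Brunn--Minkowski on $\mathbb{R}$). My focus is the characterization of extremizers, which I plan to prove by induction on $k$, using Dubuc's result \cite{Dubuc} as the base case $k=2$.

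The easy direction (that tuples of the stated form achieve equality) is a short computation. If $f_i(x) = \bigl(\int f_i\bigr) e^{-\psi(x+y_i)}$ with $\psi$ convex and $\int e^{-\psi}\,dx = 1$, then Jensen's inequality for $\psi$ gives
\[
\sum_{i=1}^k \lambda_i\, \psi(x_i+y_i) \;\geq\; \psi\Bigl(x + \sum_{i=1}^k \lambda_i y_i\Bigr)
\]
whenever $x = \sum_i \lambda_i x_i$, with equality realized by the choice $x_i = x + \sum_j \lambda_j y_j - y_i$. This yields both the explicit sup-formula for $h$ and the equality of integrals.

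For the reverse direction the key mechanism is to compress two factors into one. With $\mu = \lambda_1+\lambda_2$ and $\alpha_i = \lambda_i/\mu$, set
\[
g(y) = \sup_{y = \alpha_1 x_1 + \alpha_2 x_2} f_1^{\alpha_1}(x_1)\, f_2^{\alpha_2}(x_2).
\]
Then $(g, f_3, \dots, f_k)$ with weights $(\mu, \lambda_3, \dots, \lambda_k)$ satisfies the Pr\'ekopa--Leindler hypothesis with the same $h$, while the two-function inequality applied to $(f_1, f_2, g)$ gives $\bigl(\int g\bigr)^\mu \geq \bigl(\int f_1\bigr)^{\lambda_1} \bigl(\int f_2\bigr)^{\lambda_2}$. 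Chaining these reproduces the $k$-fold bound, so equality in (\ref{PrekopaLeindler}) forces equality in both intermediate steps. Dubuc's theorem then yields that $f_1, f_2$ are (up to scaling) translates of a common log-concave density $e^{-\psi}$, say $f_i = c_i\, e^{-\psi(\cdot + a_i)}$ for $i = 1, 2$, while the inductive hypothesis applied to $(g, f_3, \dots, f_k)$ yields that these functions are translates of a common log-concave density $e^{-\tilde\psi}$. One then computes $g$ explicitly: by convexity of $\psi$, the supremum in its definition is attained on the diagonal $x_1 + a_1 = x_2 + a_2$, giving
\[
g(y) = c_1^{\alpha_1} c_2^{\alpha_2}\, e^{-\psi(y + \alpha_1 a_1 + \alpha_2 a_2)}.
\]
Comparing with the form supplied by the inductive hypothesis forces $\tilde\psi$ to be a translate of $\psi$, hence all $f_i$ are translates of the single density $e^{-\psi}$; the formula for $h$ is then the content of the forward direction applied to this extremal form.

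The principal technical obstacles I anticipate are (i) measurability of the sup-convolution $g$, which I plan to circumvent by passing first to upper semicontinuous envelopes of the $f_i$ (an alteration on a null set, consistent with the conclusion since extremal $f_i$ are log-concave and hence u.s.c.\ up to null sets), and (ii) the possibility that $\psi$ has affine pieces, in which Jensen equality does not force the maximizing points to coincide on the diagonal. In the latter case the translation vectors $y_i$ produced by Dubuc's theorem and by the inductive step are determined only modulo the directions of affine flatness of $\psi$, but this ambiguity is precisely what the statement allows, so the argument still goes through after an adjustment of the $y_i$.
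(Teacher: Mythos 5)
Your proposal is correct and takes essentially the same route as the paper: induction on the number of functions, compressing a group of functions into a single sup-convolution $g$, applying the two-function Pr\'ekopa--Leindler inequality together with Dubuc's equality characterization as the base case, and then propagating the extremal form through the chain of equalities (the paper merges $f_2,\dots,f_k$ and keeps $f_1$ separate, whereas you merge $f_1,f_2$ and induct on $(g,f_3,\dots,f_k)$, which is an immaterial difference). Your additional attention to measurability of $g$ and to possible affine pieces of $\psi$ goes beyond the paper's terse treatment but does not alter the argument.
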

\par
\noindent
\vskip 3mm
\begin{proof}
It is clear that equality holds in inequality (\ref{PrekopaLeindler}), if the functions satisfy the condition (\ref{Gleichung0}).
\par
\noindent
The proof of the inequality is well known and can be found in,  e.g., \cite{Gardner2002, SchneiderBook}.
We give a proof of the inequality  by induction on the number of functions. This allows to establish the equality
characterizations, as for two functions, those  were established by Dubuc \cite{Dubuc}.
We have
\begin{equation*} 
 \sup_{x = \sum_{i=1}^{k} \lambda_i\, x_i} \prod_{i=1}^k  f^{\lambda_i}_i(x_i) =  \sup_{x=\lambda_1 x_1 +(1- \lambda_1)\, y}  f_1^{\lambda_1} (x_1) g^{1-\lambda_1}(y), 
\end{equation*}
where 
$$
g(y) = \sup_{y = \frac{1}{1- \lambda_1} \sum_{i=2}^{k} \lambda_i\, x_i} \prod_{i=2}^k  f^{\frac{\lambda_i}{1-\lambda_1} }_i(x_i).
$$
Applying the Pr\'ekopa--Leindler inequality for two functions gives
$$
\int 
 \sup_{x = \sum_{i=1}^{k} \lambda_i\, x_i} \prod_{i=1}^k  f^{\lambda_i}_i(x_i) \geq \left( \int f_1 dx_1 \right) ^{\lambda_1}   \left( \int g dy \right) ^{1-\lambda_1}.
 $$
Applying the induction step, one gets
$$
\int g dy \geq \prod_{i=2}^k \left(\int  f_i(x_i) dx_i \right)^{\frac{\lambda_i}{1-\lambda_1}}.
$$
This  completes the proof of the inequality.
The equality characterization follows from the equality characterization for two functions.
 \end{proof}
\vskip 3mm
\noindent
\begin{theorem}
\label{Equality}
Let $f_i \colon \mathbb{R}^n \to \mathbb{R}_{+}$, $1 \le i \le k$, be  measurable  unconditional integrable functions satisfying 
\begin{equation} \label{Mainassump}
\prod_{i=1}^k f_i(x_i)\le  \rho \left( \sum_{i, j=1, i < j}^{k} \langle x_i, x_j \rangle \right) \hskip 2mm \text{for every} \hskip 2mm x_i, x_j \in \mathbb{R}_+^n, 
\end{equation}
where $\rho$ is a  positive non-increasing function on $[0, \infty)$.  
Then for $k >2$ 
equality holds in inequality (\ref{BSunc-inequality}), i.e., 
\begin{equation*} 
 \prod_{i=1}^k \int_{\mathbb{R}^n} f_i(x_i) dx_i
= \left( \int_{\mathbb{R}^{n}}  \rho^{\frac{1}{k}} \Bigl( \frac{k(k-1)}{2} \, |x|^2 \Bigr) dx\right)^k
\end{equation*}
if and only if  there exist positive constants  $c_i$, $1 \leq i \leq k$,   such that $\prod_{i=1}^k  c_i =1$, and 
such that for all $1 \leq i \leq k$,  
\begin{enumerate}
\item
\begin{equation} \label{gleich1}
 f_i(x_i) = c_i\, \rho^{\frac{1}{k}} \left( \frac{k(k-1)}{2} | x_i |^2\right).
\end{equation}
for almost all  $x \in \mathbb{R}^n$,
\item
The function $\rho$ satisfies  the inequality 
\begin{equation} \label{gleich22}
 \prod_{i=1}^k \rho^{\frac{1}{k}} \left( \frac{k(k-1)}{2} \, |x_i|^2 \right)
\le   \rho \left( \sum_{i, j=1, i < j}^{k} \langle x_i, x_j \rangle \right)
\end{equation}
 for all $x_i, x_j$ in $\mathbb{R}^n_+$.
\end{enumerate}
\end{theorem}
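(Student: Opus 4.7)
I establish sufficiency by direct verification and necessity by tracing the equality cases through the proof of Theorem \ref{BSunc}. For sufficiency, assume the $f_i$'s have the form (\ref{gleich1}) with $\prod_i c_i = 1$ and that $\rho$ satisfies (\ref{gleich22}). Then
$$\prod_i f_i(x_i) = \prod_i c_i \cdot \prod_i \rho^{\frac{1}{k}}\Bigl(\frac{k(k-1)}{2}|x_i|^2\Bigr) \le \rho\Bigl(\sum_{i<j}\langle x_i, x_j\rangle\Bigr),$$
so (\ref{Mainassump}) holds, and $\prod_i \int f_i = \prod_i c_i \cdot \bigl(\int \rho^{\frac{1}{k}}(\frac{k(k-1)}{2}|x|^2)\, dx\bigr)^k$ equals the right-hand side of (\ref{BSunc-inequality}).

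For necessity, suppose equality holds in (\ref{BSunc-inequality}). I trace the equality cases of the three ingredients in the proof of Theorem \ref{BSunc}: the Pr\'ekopa--Leindler inequality, the pointwise assumption bound, and the AM--GM step combined with monotonicity of $\rho$. Applying the just-established Pr\'ekopa--Leindler equality theorem to $g_i(t) := f_i(e^t)\, e^{\sum_m t_m}$ on $\mathbb{R}^n$ yields, after modification on null sets, vectors $y_i \in \mathbb{R}^n$ and a convex function $\psi$ with $\int e^{-\psi} = 1$ such that
$$g_i(t) = \Bigl(\int g_i\Bigr) e^{-\psi(t + y_i)}, \quad \text{i.e.,} \quad f_i(u) = \Bigl(\int f_i\Bigr) \frac{e^{-\psi(\log u + y_i)}}{\prod_m u_m}, \qquad u \in \mathbb{R}^n_+.$$

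By convexity of $\psi$ and Jensen, the supremum in Pr\'ekopa--Leindler evaluates to
$$\sup_{\frac{1}{k}\sum t_i = t}\prod_i g_i^{\frac{1}{k}}(t_i) = \prod_i \Bigl(\int g_i\Bigr)^{\frac{1}{k}} e^{-\psi(t + \bar y)}, \quad \bar y = \frac{1}{k}\sum_i y_i,$$
attained at $t_i = t + \bar y - y_i$. Equality in the coordinatewise AM--GM step at this decomposition forces $(t_i)_m = (t_j)_m$ for all $i,j,m$, hence $y_1 = \cdots = y_k =: y$; the $f_i$'s are therefore all proportional, $f_i(u) = C_i f_*(u)$ on $\mathbb{R}^n_+$. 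Equality in the assumption (\ref{Mainassump}) at the now symmetric optimal point $x_1 = \cdots = x_k = u$ gives $\prod_i f_i(u) = \rho(\frac{k(k-1)}{2}|u|^2)$ a.e., which combined with proportionality yields $f_i(u) = c_i \rho^{\frac{1}{k}}(\frac{k(k-1)}{2}|u|^2)$ with $c_i = C_i / (\prod_j C_j)^{1/k}$ and $\prod c_i = 1$. Unconditional extension returns this form to all of $\mathbb{R}^n$, and inserting it back into (\ref{Mainassump}) produces condition (\ref{gleich22}).

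The nontrivial step is the deduction $y_1 = \cdots = y_k$ from AM--GM equality at the sup decomposition. For strictly convex $\psi$ (corresponding to a strictly decreasing $\rho$) this is immediate from the strict form of Jensen. For general $\psi$ the sup-attaining decomposition is no longer unique due to possible flat regions, and one must argue, using the a.e.\ equality between the sup and the upper bound for $t \in \mathbb{R}^n$, that the $y_i$'s can be taken equal modulo null-set modifications. A secondary technical point is transferring the a.e.\ identities cleanly through the change of variables $u = e^t$ and the unconditional extension from $\mathbb{R}^n_+$ back to $\mathbb{R}^n$.
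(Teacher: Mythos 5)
Your sufficiency direction and the overall strategy (trace equality through the proof of Theorem \ref{BSunc}, invoke the Pr\'ekopa--Leindler equality characterization to get $f_i(e^{t})=\bigl(\int f_i\bigr)e^{-\sum_m t_m}e^{-\psi(t+y_i)}$) match the paper. The genuine gap is in the one step you yourself flag as ``nontrivial'': the deduction $y_1=\cdots=y_k$. Your argument is that at the sup-attaining decomposition $t_i=t+\bar y-y_i$ the AM--GM step must be an equality, forcing $(t_i)_m=(t_j)_m$. But the AM--GM step in the proof of Theorem \ref{BSunc} enters only through $\rho$: equality there means
\begin{equation*}
\rho^{\frac1k}\Bigl(\sum_{i<j}\sum_m e^{(t_i+t_j)_m}\Bigr)=\rho^{\frac1k}\Bigl(\tfrac{k(k-1)}{2}\sum_m e^{\frac2k\sum_i (t_i)_m}\Bigr),
\end{equation*}
and since $\rho$ is only assumed non-increasing (it may have flat pieces), equal values of $\rho$ do not force equal arguments, hence do not force equality in AM--GM itself. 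Your proposal handles only the strictly decreasing case and explicitly leaves the general case open (``one must argue\ldots''), so as written the proof of necessity is incomplete precisely where the theorem's hypothesis (positive, non-increasing $\rho$) is weakest.

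The paper closes this differently, and you could adopt its route: from the a.e.\ identity $\prod_i\bigl(\int f_i\bigr)^{1/k}e^{-\psi(t+y)}=\rho^{1/k}\bigl(\tfrac{k(k-1)}{2}\sum_m e^{2t_m}\bigr)e^{\sum_m t_m}$ it does not try to force equality inside the chain; instead it substitutes $t=t_i+y_i-y$ to obtain the explicit skewed form $f_i(x)=c_i\,\rho^{1/k}\bigl(\tfrac{k(k-1)}{2}\,|e^{y_i-y}x|^2\bigr)$ for each $i$, with $\prod_i c_i=1$, and then feeds this back into the original hypothesis (\ref{Mainassump}) at the specific point $x_i=e^{-y_i}$. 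Monotonicity of $\rho$ then yields the scalar inequality $\sum_{i<j}\langle e^{-y_i},e^{-y_j}\rangle\le\tfrac{k(k-1)}{2}|e^{-y}|^2$ (inequality (\ref{Ungleichung2})), while Jensen's inequality gives the reverse bound with equality only when all $y_i$ coincide; here $k>2$ is essential, since for $k=2$ there is a single pair and no rigidity. This bypasses the need for strict monotonicity of $\rho$ at the AM--GM step and is the missing ingredient in your argument; the remainder of your extraction (proportionality of the $f_i$, the identity $\prod_i f_i(u)=\rho\bigl(\tfrac{k(k-1)}{2}|u|^2\bigr)$ at the symmetric point, unconditional extension, and reinserting into (\ref{Mainassump}) to get (\ref{gleich22})) is consistent with the paper once $y_1=\cdots=y_k$ is secured.
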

\vskip 2mm
\noindent
\begin{proof}
Obviously, if (\ref{gleich1}) and  (\ref{gleich22}) hold, then one has equality in (\ref{BSunc-inequality}) and the assumption (\ref{Mainassump}) is satisfied. 
\par
\noindent
If equality holds in Theorem \ref{BSunc}, then we have equality everywhere in the proof of Theorem \ref{BSunc}. 
We have equality in 
the  Pr\'ekopa--Leindler inequality. 
Note that the  Pr\'ekopa--Leindler inequality
is applied to the functions
$$
 f_i(e^{t_i}) e^{\sum_{m=1}^n (t_i)_m }. 
$$
Hence by the above equality characterizations in the Pr\'ekopa--Leindler inequality one can modify the functions $f_i$ an a set of zero measure in such a way, that there exist  $y_1, \cdots, y_k$ such that  and all $1 \le i \le k$
\begin{equation}
 \label{20.05.2021}
 f_i(e^{t_i}) = \Bigl(\int _{\mathbb{R}^n} f_i dx \Bigr)  e^{-\sum_{m=1}^n (t_i)_m } e^{-\psi(t_i + y_i)}
\end{equation}
where $\psi$ is a convex function such that  $\int _{\mathbb{R}^n} e^{-\psi(x)} dx =1$.  In addition,
 the following equality must hold for  almost  all  $t$
\begin{align*}
 \sup_{t = \frac{1}{k} \sum_{i=1}^k t_i}  &  \prod_{i=1}^k  \Bigl( f^{\frac{1}{k}}_i(e^{t_i}) e^{\frac{1}{k} \sum_{m=1}^n (t_i)_m } \Bigr)
 =
   \sup_{t = \frac{1}{k} \sum_{i=1}^k t_i}  \Bigl[ \rho^{\frac{1}{k}} \Bigl(  \sum_{i,j =1, i < j}^{k} \sum_{m=1}^{n}    e^{(t_i + t_j)_m} \Bigr) \Bigr]
e^{\sum_{m=1}^n (t)_m }
\\& =
  \rho^{\frac{1}{k}}\Bigl( \frac{k(k-1)}{2} \sum_{m=1}^{n}     e^{ 2 (t)_m}\Bigr) 
e^{\sum_{m=1}^n (t)_m }.
\end{align*}
In particular, changing variables $x_i=e^{t_i}$
one gets 
\begin{equation}
\label{geommean}
  \sup_{(x)_m = \prod_{i=1}^k (x_i)^{\frac{1}{k}}_m}  \Bigl[ \rho^{\frac{1}{k}} \Bigl(  \sum_{i,j =1, i < j}^{k} \langle x_i,  x_j \rangle \Bigr) \Bigr]
=
  \rho^{\frac{1}{k}}\Bigl( \frac{k(k-1)}{2} |x|^2 \Bigr). 
\end{equation}
Further,  substituting (\ref{20.05.2021}),  one gets that for a.e. $t$
$$
\prod_{i=1}^k \Bigl(\int _{\mathbb{R}^n} f_i dx \Bigr)^{\frac{1}{k}}    \sup_{t = \frac{1}{k} \sum_{i=1}^k t_i}   \prod_{i=1}^k 
 e^{-\frac{1}{k} \psi(t_i + y_i)}
 =
  \rho^{\frac{1}{k}}\Bigl( \frac{k(k-1)}{2} \sum_{m=1}^{n}     e^{ 2 (t)_m}\Bigr) 
e^{\sum_{m=1}^n (t)_m}.
$$
Applying convexity of $\psi$,  one gets that $ \sup_{t = \frac{1}{k} \sum_{i=1}^k t_i}   \prod_{i=1}^k 
 e^{-\frac{1}{k} \psi(t_i + y_i)} = e^{- \psi (t+y)}$, where $y = \frac{1}{k}\sum_{i=1}^k y_i$. 
Finally,
$$
\prod_{i=1}^k \Bigl(\int _{\mathbb{R}^n} f_i dx \Bigr)^{\frac{1}{k}}    e^{- \psi (t+y)} 
=    \rho^{\frac{1}{k}}\Bigl( \frac{k(k-1)}{2} \sum_{m=1}^{n}     e^{ 2 (t)_m}\Bigr) 
e^{\sum_{m=1}^n (t)_m }
$$
 almost everywhere. Note that, if fact, equality holds pointwise, because $e^{-\psi}$ is a continuous function on $\{ \psi < \infty\}$ and $\rho$
is non-increasing.
Substitute $t = t_i + y_i - y$ into this identity. One gets
$$
\prod_{i=1}^k \Bigl(\int _{\mathbb{R}^n} f_i dx \Bigr)^{\frac{1}{k}}    e^{- \psi (t_i+y_i)} 
=    \rho^{\frac{1}{k}}\Bigl( \frac{k(k-1)}{2} \sum_{m=1}^{n}     e^{ 2 (t_i + y_i -y)_m}\Bigr) 
e^{\sum_{m=1}^n (t_i + y_i - y)_m }.
$$
Hence (\ref{20.05.2021}) implies  that for all  $t_i$, 
$$
 f_i(e^{t_i}) = \frac{\Bigl(\int _{\mathbb{R}^n} f_i dx \Bigr)}{\prod_{i=1}^k \Bigl(\int _{\mathbb{R}^n} f_i dx \Bigr)^{\frac{1}{k}}  }   \rho^{\frac{1}{k}}\Bigl( \frac{k(k-1)}{2} \sum_{m=1}^{n}     e^{ 2 (t_i + y_i - y)_m}\Bigr) e^{\sum_{m=1}^n (y_i - y)_m } .
$$
We make a  change of variables $x=e^{t_i}$ and get
$$
 f_i(x) = c_i\,   \rho^{\frac{1}{k}}\Bigl( \frac{k(k-1)}{2} \,    \left|  e^{ y_i - y} \,  x \right|^2 \Bigr),
$$
where $e^{y_i-y} x \in \mathbb{R}^n$ is defined by $(e^{y_i-y} x)_m = e^{(y_i-y)_m} (x)_m$ and where
$$
c_i=  \frac{\Bigl(\int _{\mathbb{R}^n} f_i dx \Bigr)}{\prod_{i=1}^k \Bigl(\int _{\mathbb{R}^n} f_i dx \Bigr)^{\frac{1}{k}}  }   \,  e^{\sum_{m=1}^n (y_i - y)_m }.
$$
Note that $\prod_{i=1}^k  c_i =1$.
Then we have by assumption (\ref{Mainassump})  for all  $x_i, x_j \in \mathbb{R}_+^n$, 
\begin{equation*}
 \rho \left( \sum_{i, j=1, i < j}^{k} \langle x_i, x_j \rangle \right) \geq \prod_{i=1}^k f_i(x_i) = 
 \prod_{i=1}^k \rho^{\frac{1}{k}}\Bigl( \frac{k(k-1)}{2} \,    \left|  e^{ y_i - y} \,  x_i \right|^2 \Bigr).
\end{equation*}
However, inequality 
\begin{equation}\label{Ungleichung}
\rho \left( \sum_{i, j=1, i < j}^{k} \langle x_i, x_j \rangle \right) \geq \prod_{i=1}^k \rho^{\frac{1}{k}}\Bigl( \frac{k(k-1)}{2} \,    \left|  e^{ y_i - y} \,  x_i \right|^2 \Bigr)
\end{equation}
only holds if $y_i=y$ for all $i$. To see that, note that  (\ref{Ungleichung}) holds in particular for $x_i=e^{-y_i}$ which leads to 
\begin{equation*}\label{Ungleich}
\rho \left( \sum_{i, j=1, i < j}^{k} \langle e^{-y_i}, e^{-y_j} \rangle \right) \geq \prod_{i=1}^k \rho^{\frac{1}{k}}\Bigl( \frac{k(k-1)}{2} \,    \left|  e^{  - y}  \right|^2 \Bigr)
= \rho \Bigl( \frac{k(k-1)}{2} \,    \left|  e^{  - y}  \right|^2 \Bigr)
\end{equation*}
and, as $\rho$ is decreasing, to
\begin{equation}\label{Ungleichung2}
 \sum_{i, j=1, i < j}^{k} \langle e^{-y_i}, e^{-y_j} \rangle  \leq 
  \frac{k(k-1)}{2} \,    \left |  e^{  - y}  \right|^2.
\end{equation}
Note that for $k>2$ inequality  (\ref{Ungleichung2}) only holds if $y_i=y$ for all $i$.  Indeed, by Jensen's inequality, 
\begin{eqnarray*} 
\frac{1}{\frac{k(k-1)}{2}}  \sum_{i, j=1, i < j}^{k} \langle e^{-y_i}, e^{-y_j} \rangle  &= & \sum_{m=1}^{n} \frac{1}{\frac{k(k-1)}{2}}  \sum_{i, j=1, i < j}^{k} e^{-(y_i + y_j)_m} 
\geq  \sum_{m=1}^{n}  e^{- \frac{1}{\frac{k(k-1)}{2}}  \sum_{i, j=1, i < j}^{k} (y_i + y_j)_m }\\
&=&  \sum_{m=1}^{n}  e^{- \frac{2}{k}  \sum_{i=1}^{k} (y_i)_m } = |e^{-y} |^2.
\end{eqnarray*} 
Equality in Jensen's inequality shows that thus $y_i=y$ for all $i$.
\par
\noindent
Consequently,
equality in (\ref{BSunc-inequality}) is equivalent to 
\vskip 2mm 
1.  \hskip 2mm $f_i(x) = c_i\,   \rho^{\frac{1}{k}}\Bigl( \frac{k(k-1)}{2} \,    \left| x \right|^2 \Bigr)$,
almost everywhere and 
\par
2. \hskip 2mm $\prod_{i=1}^k \rho^{\frac{1}{k}} \left( \frac{k(k-1)}{2} \, |x_i|^2 \right)
\le   \rho \left( \sum_{i, j=1, i < j}^{k} \langle x_i, x_j \rangle \right).$
\end{proof}
\vskip 3mm
\noindent
Equation (\ref{gleich1}) says in particular that if equality holds, then all $f_i$ are equal modulo normalization.
\par
\noindent
Under some natural  assumptions on the function $\rho$, one can show that 
inequality (\ref{gleich22})  always holds.
\vskip 2mm

\noindent
\begin{remark} \label{Specialrho}
Let $\rho(t) = e^{-W(t)}$, where $W$ is convex and increasing. 
Then (\ref{gleich22}) holds. 
\begin{proof}
If $\rho(t) = e^{-W(t)}$, inequality (\ref{gleich22}) is equivalent to 
$$
\frac{1}{k} \sum_{i=1}^k W \left( \frac{k(k-1)}{2} \, |x_i |^2 \right)
\ge W \Bigl( \sum_{i,j=1, i <j}^k  \langle x_i, x_j \rangle  \Bigr).
$$
By convexity of $W$, $\frac{1}{k} \sum_{i=1}^k W \left( \frac{k(k-1)}{2} \, |x_i |^2 \right)
\ge W \left( \frac{(k-1)}{2}    \sum_{i=1}^k \, |x_i |^2 \right)$. Therefore it is enough to have that 
$$
W \left( \frac{(k-1)}{2} \,  \sum_{i=1}^k |x_i |^2 \right) \geq  W \Bigl( \sum_{i,j=1, i <j}^k  \langle x_i, x_j \rangle  \Bigr)
$$
or, as $W$ is increasing,
$$
 \frac{(k-1)}{2}  \,  \sum_{i=1}^k |x_i |^2  \geq \sum_{i,j=1, i <j}^k  \langle x_i, x_j \rangle,
$$
which holds, because
$$
\sum_{i,j=1, i <j}^k  \langle x_i, x_j \rangle \le \frac{1}{2} \sum_{i,j=1, i <j}^k \Bigl( |x_i|^2 + |x_j|^2 \Bigr)
= \frac{(k-1)}{2}  \,  \sum_{i=1}^k |x_i |^2 .
$$
\end{proof}
\end{remark}
\vskip 3mm
\noindent
Theorems \ref{BSunc} and  \ref{Equality} and Remark \ref{Specialrho} immediately yield the following corollary.
\vskip 2mm
\begin{corollary} \label{Cor-Specialrho}
Let $\rho(t)= e ^{-\frac{t}{k-1}}$. 
Let $f_i \colon \mathbb{R}^n \to \mathbb{R}_{+}$, $1 \le i \le k$, be  measurable  unconditional integrable functions satisfying 
\begin{equation*} \label{mainassump}
\prod_{i=1}^k f_i(x_i)\le  e^{- \frac{1}{k-1}\, \left( \sum_{i, j=1, i < j}^{k} \langle x_i, x_j \rangle \right)} \hskip 2mm \text{for every} \hskip 2mm x_i, x_j \in \mathbb{R}_+^n.
\end{equation*} 
Then 
\begin{equation*} \label{gleich}
 \prod_{i=1}^k \int_{\mathbb{R}^n} f_i(x_i) dx_i
\leq \left( \int_{\mathbb{R}^{n}}  e^{-\frac{ |x|^2}{k}} dx\right)^k = \left(2 \pi \right)^{k \frac{n}{2}}
\end{equation*}
and  for $k >2$ 
equality holds if and only if  there exist positive constants  $c_i$, $1 \leq i \leq k$,  such that $\prod_{i=1}^k  c_i =1$ and 
such that 
for almost all  $x \in \mathbb{R}^n$,  for all $1 \leq i \leq k$,
\begin{equation*} \label{gleich1}
f_i(x_i) = c_i\, e^{-\frac{ |x_i|^2}{2}}.
\end{equation*}
\end{corollary}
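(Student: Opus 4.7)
The plan is to derive this corollary directly by specializing Theorem \ref{BSunc} and Theorem \ref{Equality} to the choice $\rho(t)=e^{-t/(k-1)}$, with the auxiliary inequality (\ref{gleich22}) verified via Remark \ref{Specialrho}. First I would check the hypotheses of Theorem \ref{BSunc}: the function $\rho(t)=e^{-t/(k-1)}$ is positive and non-increasing on $[0,\infty)$, and
$$
\int_{\mathbb{R}}\rho^{1/k}(t^2)\,dt = \int_{\mathbb{R}} e^{-t^2/(k(k-1))}\,dt < \infty.
$$
A direct computation yields
$$
\rho^{\frac{1}{k}}\!\left(\frac{k(k-1)}{2}|u|^2\right) = \exp\!\left(-\frac{1}{k(k-1)}\cdot\frac{k(k-1)}{2}|u|^2\right) = e^{-|u|^2/2},
$$
so the right-hand side of (\ref{BSunc-inequality}) becomes $\bigl(\int_{\mathbb{R}^n} e^{-|u|^2/2}\,du\bigr)^{k} = (2\pi)^{kn/2}$, establishing the asserted inequality.

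For the equality characterization when $k>2$, the ``if'' direction is essentially a substitution check: if $f_i(x) = c_i\,e^{-|x|^2/2}$ with $\prod_i c_i=1$, then $\prod_{i=1}^{k}\int_{\mathbb{R}^n} f_i\,dx_i = (2\pi)^{kn/2}$, so equality is attained. One must also verify that the hypothesis $\prod_i f_i(x_i) \le e^{-\frac{1}{k-1}\sum_{i<j}\langle x_i,x_j\rangle}$ indeed holds; after taking logarithms this reduces to
$$
\sum_{i,j=1, i<j}^{k}\langle x_i, x_j\rangle \le \frac{k-1}{2}\sum_{i=1}^{k} |x_i|^2,
$$
which follows from the elementary estimate $\langle x_i,x_j\rangle \le \tfrac{1}{2}(|x_i|^2+|x_j|^2)$ that also appears at the end of the proof of Remark \ref{Specialrho}.

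For the ``only if'' direction, I would invoke Theorem \ref{Equality}: equality in (\ref{BSunc-inequality}) forces, for each $i$, $f_i(x) = c_i\,\rho^{1/k}\!\bigl(\tfrac{k(k-1)}{2}|x|^2\bigr) = c_i\,e^{-|x|^2/2}$ almost everywhere with $\prod_i c_i = 1$, and requires that $\rho$ satisfy the auxiliary inequality (\ref{gleich22}). The latter condition is automatic by Remark \ref{Specialrho} applied with $W(t)=t/(k-1)$, which is convex and increasing. There is no real obstacle here; the corollary is essentially a bookkeeping exercise combining three results already established in the paper, and the only step requiring thought is the explicit computation of $\rho^{1/k}\bigl(\tfrac{k(k-1)}{2}|u|^2\bigr)$ and its integral.
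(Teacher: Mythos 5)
Your proposal is correct and follows essentially the same route as the paper, which states that Theorems \ref{BSunc} and \ref{Equality} together with Remark \ref{Specialrho} immediately yield the corollary; your explicit computation $\rho^{1/k}\bigl(\tfrac{k(k-1)}{2}|u|^2\bigr)=e^{-|u|^2/2}$ and the verification that condition (\ref{gleich22}) is automatic for this $\rho$ are exactly the bookkeeping the paper leaves implicit.
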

\vskip 3mm
\noindent
The next proposition addresses the equality characterizations of Proposition \ref{cor-f-asa}.
\vskip 2mm
\begin{proposition}\label {equality-faii}
Let $V_i \colon \mathbb{R}^n \to \mathbb{R} \cup \{\infty\}$, $1 \le i \le k$, be convex unconditional functions and let $\rho$ be a  positive non-increasing 
function  on $[0,\infty)$ such that $\int_{\mathbb{R}}\rho^{\frac{1}{k}}(t^2) dt <\infty$.
\vskip 2mm
\noindent
(i) Let $\lambda \in [0,\frac{1}{2}]$ and suppose the $V_i$ 
satisfy
\begin{equation}\label{Annahme:fais}
\prod_{i=1}^k e^{ -V_i(x_i)}\le  \rho \left( \sum_{i, j=1, i < j}^{k} \langle x_i, x_j \rangle \right), \hskip 2mm \text{for every} \hskip 2mm x_i, x_j \in \mathbb{R}^n \hskip 2mm \text{satisfying} \hskip 2mm \langle x_i, x_j \rangle  \geq 0.
\end{equation}
Then equality holds in inequality (\ref{multi-asa1}), i.e., 
\begin{equation*} 
 \prod_{i=1}^k as_\lambda (V_i) = (2\pi)^{k n\lambda} \, 
 \left( \int_{\mathbb{R}^{n}}  \rho^{\frac{1}{k}} \Bigl( \frac{k(k-1)}{2} \, |u|^2 \Bigr) du\right)^{k(1-2 \lambda)}, 
\end{equation*}
 if and only if  for all $i$, there are $a_i \in\mathbb{R}$  
 such that for almost all $x\in\mathbb{R}^n$, 
\begin{equation}
\label{Gleich:fais1}
V_i(x_i) = c \frac{|x_i|^2}{2} +a_i,
\end{equation}
\begin{equation}
\label{Gleich:fais2}
\rho(t) = e^{- \frac{c}{k-1} t - \sum_{i=1}^k a_i}.
\end{equation}
for some $c>0$ and numbers $a_i$.
\vskip 2mm
\noindent
(ii) Let $\lambda \in [\frac{1}{2}, 1]$ and suppose the $V_i$ 
are such that 
$$
\prod_{i=1}^k e^{ -V_i^*(x_i)}\le  \rho \left( \sum_{i, j=1, i < j}^{k} \langle x_i, x_j \rangle \right), \hskip 2mm \text{for every} \hskip 2mm x_i, x_j \in \mathbb{R}^n \hskip 2mm \text{satisfying} \hskip 2mm \langle x_i, x_j \rangle  \geq 0.
$$
Then the equality characterizations  in inequality (\ref{multi-asa3}) respectively (\ref{multi-asa4}) are as in (i) with $V_i^*$ instead of $V_i$.

\end{proposition}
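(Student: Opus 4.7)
The plan is to combine the equality characterizations of the two inequalities appearing in the proof of Proposition \ref{cor-f-asa}(i). The chain there reads
$$\prod_{i=1}^k as_\lambda(V_i) \le (2\pi)^{kn\lambda}\Bigl(\prod_{i=1}^k \int e^{-V_i}\Bigr)^{1-2\lambda} \le (2\pi)^{kn\lambda}\Bigl(\int_{\mathbb{R}^n} \rho^{1/k}\Bigl(\tfrac{k(k-1)}{2}|u|^2\Bigr)du\Bigr)^{k(1-2\lambda)},$$
so equality in (\ref{multi-asa1}) forces equality in both steps. The first is Theorem \ref{Th:asa}(iv), whose equality case (for $\lambda\neq 0$) says $V_i(x)=\langle A_i x, x\rangle + b_i$ for some positive definite $A_i$; the second is Theorem \ref{BSunc}, whose equality case is given by Theorem \ref{Equality} (applicable since $k>2$), namely $e^{-V_i(x)} = c_i\,\rho^{1/k}(\frac{k(k-1)}{2}|x|^2)$ with $\prod_i c_i = 1$.

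The forward direction then consists of matching these two descriptions. Substituting the quadratic form into the Theorem \ref{Equality} condition gives
$$\langle A_i x, x\rangle + b_i \;=\; -\log c_i - \frac{1}{k}\log\rho\Bigl(\tfrac{k(k-1)}{2}|x|^2\Bigr).$$
Since the right-hand side depends on $x$ only through $|x|^2$, the quadratic form $\langle A_i x, x\rangle$ must be radial, which (combined with unconditionality forcing $A_i$ diagonal) forces $A_i = (c/2)I$ for some $c>0$. Because the right-hand side has the \emph{same} $x$-dependence for every $i$, the constant $c$ is common across $i$, so $V_i(x) = c|x|^2/2 + a_i$ with $a_i := b_i$. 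Plugging this back in and setting $t = \frac{k(k-1)}{2}|x|^2$ yields $\log \rho(t) = -\frac{ct}{k-1} - k(a_i + \log c_i)$; consistency across $i$ demands $a_i + \log c_i$ is independent of $i$, and $\prod_i c_i = 1$ pins this common value down to $\frac{1}{k}\sum_j a_j$, giving $\rho(t) = e^{-ct/(k-1) - \sum_j a_j}$.

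The reverse direction is a direct computation: for $V_i(x) = c|x|^2/2 + a_i$, one evaluates $as_\lambda(V_i)$ straight from (\ref{asa}) and checks by a Gaussian integration that with the stated $\rho$ both sides of (\ref{multi-asa1}) agree. Part (ii) is obtained by applying part (i) to the tuple $(V_1^*,\ldots,V_k^*)$ at parameter $1-\lambda\in[0,\tfrac{1}{2}]$, using the duality $as_\lambda(V_i) = as_{1-\lambda}(V_i^*)$ from Theorem \ref{Th:asa}(ii) to translate both the inequality and the hypothesis.

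The main obstacle is the identification step that simultaneously pins down $A_i$ and $\rho$: unconditionality alone only forces $A_i$ diagonal, and Theorem \ref{Equality} alone only forces $e^{-V_i}$ to be a constant multiple of a radial function of $|x_i|^2$; it is the comparison of these two structural constraints that forces $A_i$ to be a scalar multiple of the identity with a common scalar across all $i$, and simultaneously forces $\rho$ to be exponential with the precise coefficient $c/(k-1)$. A subtlety worth flagging is the boundary case $\lambda=0$, where the first inequality in the chain is trivially an equality, so the characterization in the statement should be understood for $\lambda\in(0,\tfrac{1}{2}]$ (respectively $\lambda\in[\tfrac{1}{2},1)$ in part (ii)).
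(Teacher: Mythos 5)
Your proposal follows essentially the same route as the paper: equality in (\ref{multi-asa1}) is traced back to equality in both inequalities of the proof of Proposition \ref{cor-f-asa}, the first handled by the equality case of Theorem \ref{Th:asa}(iv) (quadratic $V_i$) and the second by Theorem \ref{Equality} ($e^{-V_i}=c_i\,\rho^{1/k}(\tfrac{k(k-1)}{2}|x|^2)$ with $\prod_i c_i=1$), after which matching the two descriptions forces $A_i=\tfrac{c}{2}\,Id$ with a common $c$ and $\rho(t)=Ce^{-ct/(k-1)}$, and part (ii) follows by the duality $as_\lambda(V)=as_{1-\lambda}(V^*)$. Your remark that the characterization degenerates at the endpoint $\lambda=0$ (where Theorem \ref{Th:asa}(iv) is trivially an equality) is a fair caveat not made explicit in the paper, but it does not change the substance of the argument.
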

\vskip 2mm
\noindent
\begin{proof}
(i) It is clear that if (\ref{Gleich:fais1}) and (\ref{Gleich:fais2}) hold, then there is equality in (\ref{multi-asa1}) and the assumption (\ref{Annahme:fais}) holds.
On the other hand,  by Theorem \ref{Th:asa}, equality holds  in the first inequality of the proof of Proposition \ref{cor-f-asa}, if and only if there exist $a_i \in\mathbb{R}$ and  positive definite matrices  $A_i$  such that for every $x\in\mathbb{R}^n$, for all $1 \leq i \leq k$, 
\begin{equation} \label{FASAgleich1}
V_i(x)=\langle A_ix,x\rangle+a_i.
\end{equation}
By Theorem \ref{Equality}, equality holds  in the second  inequality of the proof of Proposition \ref{cor-f-asa}, if and only if there exist constants $c_i$, $1 \leq i \leq k$,
such that $\prod_{i=1}^k  c_i =1$ and such that  for all $1 \leq i \leq k$,
\begin{equation}  \label{FASAgleich2}
e^{-V_i(x)} = c_i\, \rho^{\frac{1}{k}} \left( \frac{k(k-1)}{2} |x |^2\right), 
\end{equation}
almost everywhere,  and 
the function $\rho$ satisfies the inequality 
\begin{equation*}
\prod_{i=1}^k \rho^{\frac{1}{k}} \left( \frac{k(k-1)}{2} \, |x_i |^2 \right)
\le   \rho \left( \sum_{i, j=1, i < j}^{k} \langle x_i, x_j \rangle \right).
\end{equation*}
It follows from (\ref{FASAgleich1}) and (\ref{FASAgleich2}) that  for almost all $x$, for all $i$
$$
e^{-\langle A_i\, x, x \rangle}  \, e^{-a_i} = c_i\, \rho^{\frac{1}{k}} \left( \frac{k(k-1)}{2} |x |^2\right).
$$
In particular, for $x=0$, we get that for all $i$, $\rho^\frac{1}{k} (0)  = \frac{e^{-a_i}}{c_i}$ and thus for all $i$
$$
e^{-\langle A_i\, x, x \rangle}  =  \rho^{-\frac{1}{k}} (0)  \, \rho^{\frac{1}{k}} \left( \frac{k(k-1)}{2} |x |^2\right).
$$
This clearly means that $A_i = \frac{c}{2}\,  Id$ for some $c>0$ and $\rho(t) = C e^{-\frac{c}{k-1} t}$ and we easily complete the proof.
\vskip 2mm
\noindent
(ii) The proof of (ii) is done in the same way. 
\end{proof}
\vskip 4mm

\section{The Blaschke--Santal\'o inequality and  the affine isoperimetric inequality for many sets } \label{manysets}

\vskip 3mm
\noindent
The classical Blaschke--Santal\'o inequality for symmetric sets can be stated in the following way, 
$$
\int_{S^{n-1}} f^n dx \int_{S^{n-1}} g^n dy \le  n^2 \left(\text{vol}_{n} (B^n_2) \right)^2 = \left(\text{vol}_{n-1}(S^{n-1})\right)^2,
$$
where $f, g$ are positive symmetric functions on $S^{n-1}$ satisfying
$$
f(x) g(y) \le \frac{1}{\langle x,y \rangle_{+}},
$$
and where for $a \in \mathbb{R}$, $a_+=\max\{a,0\}$.  Note that if $x$ and $y$ are orthogonal, 
then the right hand side of the inequality is infinite. This happens only for set of measure zero.
The latter inequality is satisfied, in particular, if
$$
f(x) = r_{K}(x), \hskip 4mm   g(y) = \frac{1}{h_{K}(y)} = r_{K^{\circ}}(y), 
$$
where $r_{K}(x)=\max\{\lambda  \geq 0: \lambda x  \in K\}$  is the radial function of 
the   convex body  $K$,  $h_{K}(y) = \sup\{\langle x,y\rangle : x \in K\} $ is the support  function of $K$
and where for a $0$-symmetric convex body $K$ with non-empty interior, 
$$
K^{\circ} = \{ y \in \mathbb{R}^n: \langle x, y \rangle  \leq 1\,  \forall x \in K\}
$$ 
is the polar body of $K$. 
We can then  write the above as follows,
$$
\text{vol}_n(K_1)\,   \text{vol}_n(K_2) \le (\text{vol}_n(B^n_2))^2,
$$
provided
\begin{equation*}
\langle x, y \rangle  \le 1, \  \forall 
x \in K_1, \,  \forall  y \in  K_2.
\end{equation*}
\vskip 3mm
\noindent
We now prove a Blaschke--Santal\'o inequality for multiple sets. We recall that a subset $K$ in $\mathbb{R}^n$ is unconditional if its characteristic function
$\1_K$ is unconditional.
\vskip 2mm
\noindent
\begin{theorem}\label{BS-sets2}
Let $K_i$, $1 \le i \le k$, be  unconditional convex bodies  in $\mathbb{R}^n$ 
such that 
$$
\prod_{i=1}^k  e^{- \frac{1}{2} \|x_i\|^2_{K_i}}\leq \rho \left( \sum_{i,j=1, i<j}^k \langle x_i, x_j\rangle \right) \hskip 2mm \text{for every} \hskip 2mm x_i, x_j \in \mathbb{R}_+^n, 
$$
where $\rho$ is a  positive non-increasing function on $[0,\infty)$ such that  $\int_{\mathbb{R}}\rho^{\frac{1}{k}}(t^2) dt <\infty$. 
Then 
\begin{eqnarray*} 
\prod_{i=1}^k \text{\rm vol}_n(K_i) \leq \left( \frac{\text{\rm vol}_n(B^n_2)}{(2\pi)^\frac{n}{2}} \right)^k \left(  \int _{\mathbb{R}^n}  \rho^{\frac{1}{k}} \Bigl( \frac{k(k-1)}{2} \, |x|^2 \Bigr) dx\right)^k .
\end{eqnarray*}
For $k >2$, equality holds 
if and only if  
$K_i= r \, B_2^n$ and $\rho(t)= e^{- \frac{t}{(k-1)r^2}}$ for some $r>0$.
\vskip 2mm
\noindent
In particular, if $\rho(t)= e^{- \frac{t}{k-1}}$, then, if $\sum_{i=1, i< j }^k \langle x_i, x_j\rangle \leq \frac{k-1}{2} \sum_{i=1}^k \|x_i\|^2_{K_i}$, 
we have that 
\begin{equation*}
\prod_{i=1}^k \text{\rm vol}_n(K_i) \le  \left(\text{\rm vol}_n(B^n_2)\right)^k
\end{equation*}
and for $k >2$ equality holds  if and only if $K_i=B^n_2$ for all $1 \leq i\leq k$.
 \end{theorem}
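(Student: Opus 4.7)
The plan is to reduce the set inequality to the functional inequality of Theorem \ref{BSunc} by passing through the Gaussians associated to the gauges $\|\cdot\|_{K_i}$. Specifically, I would set
\[
f_i(x) = e^{-\frac{1}{2}\|x\|^2_{K_i}}, \qquad 1\le i \le k.
\]
Since $K_i$ is unconditional, so is its gauge $\|\cdot\|_{K_i}$, hence each $f_i$ is unconditional. Boundedness of $K_i$ gives a constant $c_i>0$ with $\|x\|_{K_i}\ge c_i |x|$, which makes each $f_i$ integrable. The hypothesis of Theorem \ref{BS-sets2} is literally the hypothesis of Theorem \ref{BSunc} for these $f_i$, so applying it yields
\[
\prod_{i=1}^k \int_{\mathbb{R}^n} e^{-\frac{1}{2}\|x\|^2_{K_i}}\,dx \;\le\; \left( \int_{\mathbb{R}^n} \rho^{1/k}\Bigl( \tfrac{k(k-1)}{2}|u|^2 \Bigr)\, du \right)^{k}.
\]

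The second step is to convert each Gaussian integral to a volume. Passing to polar coordinates $x = r\theta$, using $\|r\theta\|_K = r\|\theta\|_K$ and the substitution $s = r\|\theta\|_K$, one computes
\[
\int_{\mathbb{R}^n} e^{-\frac{1}{2}\|x\|^2_K}\,dx \;=\; \left(\int_{S^{n-1}} \frac{d\sigma(\theta)}{\|\theta\|_K^n}\right)\int_0^\infty s^{n-1} e^{-s^2/2}\,ds \;=\; n\,\mathrm{vol}_n(K)\cdot 2^{n/2-1}\Gamma(n/2),
\]
using the radial identity $\mathrm{vol}_n(K) = \tfrac{1}{n}\int_{S^{n-1}} \|\theta\|_K^{-n}\,d\sigma$. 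Plugging in $\mathrm{vol}_n(B_2^n) = \pi^{n/2}/\Gamma(n/2+1)$ rewrites this as
\[
\int_{\mathbb{R}^n} e^{-\frac{1}{2}\|x\|^2_K}\,dx \;=\; \frac{(2\pi)^{n/2}}{\mathrm{vol}_n(B_2^n)}\,\mathrm{vol}_n(K).
\]
Substituting back into the functional inequality and rearranging gives exactly the claimed product bound on the $\mathrm{vol}_n(K_i)$.

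For the equality case when $k>2$, I would invoke Theorem \ref{Equality}: equality in the functional inequality forces $e^{-\frac{1}{2}\|x\|^2_{K_i}} = c_i\,\rho^{1/k}(\tfrac{k(k-1)}{2}|x|^2)$ a.e. Thus $\|x\|^2_{K_i}$ is a function of $|x|^2$ alone, so each $K_i$ is a Euclidean ball $r_i B_2^n$; matching the quadratic exponents on both sides then forces $r_i$ to be independent of $i$ (call it $r$) and $\rho$ to be the exponential $\rho(t) = e^{-t/((k-1)r^2)}$, where the multiplicative constant is pinned by $\rho(0)=1$ via $\prod_i c_i = 1$. One should also check condition 2 of Theorem \ref{Equality} for this $\rho$, but this is immediate from $\sum_{i<j}\langle x_i,x_j\rangle \le \frac{k-1}{2}\sum_i |x_i|^2$. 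For the special case $\rho(t) = e^{-t/(k-1)}$ a direct computation gives $\rho^{1/k}(\tfrac{k(k-1)}{2}|x|^2) = e^{-|x|^2/2}$, so $\int \rho^{1/k} = (2\pi)^{n/2}$ and the right-hand side telescopes to $\mathrm{vol}_n(B_2^n)^k$; the equality condition then specializes to $r=1$, i.e. $K_i=B_2^n$ for all $i$, and the hypothesis unpacks to the stated inequality $\sum_{i<j}\langle x_i,x_j\rangle \le \tfrac{k-1}{2}\sum_i\|x_i\|^2_{K_i}$. No serious obstacle is expected here: the only delicate point is tracking the normalization in the polar-coordinate identity and correctly passing the equality characterization of Theorem \ref{Equality} through the exponential change of variables.
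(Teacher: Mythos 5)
Your proof is correct and follows essentially the same route as the paper: apply Theorem \ref{BSunc} to $f_i=e^{-\frac12\|x\|^2_{K_i}}$, convert the Gaussian-type integrals to volumes via the identity $\int e^{-\frac12\|x\|^2_K}dx=\frac{(2\pi)^{n/2}}{\mathrm{vol}_n(B^n_2)}\mathrm{vol}_n(K)$ (which the paper quotes and you rederive by polar coordinates), and settle the equality case through Theorem \ref{Equality}. Your equality analysis (each $K_i$ a ball, radii and $\rho$ pinned by matching exponents and $\prod_i c_i=1$, i.e.\ $\rho(0)=1$) matches the paper's argument in a slightly different order, which is fine.
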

 \vskip 2mm
 \noindent
\begin{proof}
As for a convex body with $0$ in its interior $\text{vol}_n(K)= \frac{\text{vol}_n(B^n_2)}{(2\pi)^\frac{n}{2}} \int _{\mathbb{R}^n} e^{- \frac{1}{2} \|x\|^2_K} dx$,
we get from Theorem \ref{BSunc} that 
\begin{eqnarray*} 
\prod_{i=1}^k \text{\rm vol}_n(K_i) = \left( \frac{\text{vol}_n(B^n_2)}{(2\pi)^\frac{n}{2}} \right)^k  \prod_{i=1}^k \int _{\mathbb{R}^n} e^{- \frac{1}{2} \|x\|^2_{K_i}} dx
\leq  \left( \frac{\text{vol}_n(B^n_2)}{(2\pi)^\frac{n}{2}} \right)^k \left(  \int _{\mathbb{R}^n}  \rho^{\frac{1}{k}} \Bigl( \frac{k(k-1)}{2} \, |x|^2 \Bigr) dx\right)^k 
\end{eqnarray*}
provided that 
$$
\prod_{i=1}^k  e^{- \frac{1}{2} \|x_i\|^2_{K_i}} \leq \sum_{i,j =1,  i<j}^k \rho \left(  \langle x_i, x_j\rangle\right).
$$
The equality characterizations follow from Theorem \ref{Equality} and Corollary \ref{Cor-Specialrho}.
Indeed, by Theorem \ref{Equality}, equality holds for $k>2$ if and only if 
there exist constants  $c_i$, $1 \leq i \leq k$,   such that $\prod_{i=1}^k  c_i =1$, and 
such that 
\vskip 2mm
1.  \,  $e^{- \frac{1}{2} \|x\|^2_{K_i}}  = c_i\, \rho^{\frac{1}{k}} \left( \frac{k(k-1)}{2} |x |^2\right)$ and 
\vskip 2mm
2. The function $\rho$ satisfies 
\begin{equation*}
\prod_{i=1}^k \rho^{\frac{1}{k}} \left( \frac{k(k-1)}{2} \, |x_i|^2 \right)
\le   \rho \left( \sum_{i, j=1, i < j}^{k} \langle x_i, x_j \rangle \right).
\end{equation*}
From the first identity we get for $x=0$  that $c_i=\frac{1}{\rho^\frac{1}{k}(0)}$  for all $i$.  As $\prod_{i=1}^k  c_i =1$, this implies that $\rho(0)=1$
and hence $c_i=1$ for all $i$.   In particular, this implies that almost everywhere on $\mathbb{R}^n$, for all $i,j$, $\|x\|_{K_i} = \|x\|_{K_j}  = \|x\|_{K}$
and thus $K_i=K$ for all $i$. 
From the relation $e^{- \frac{1}{2} \|x\|^2_{K_i}}  =  \rho^{\frac{1}{k}} \left( \frac{k(k-1)}{2} |x |^2\right)$ we get that $K_i = K = r B_2^n$, hence $e^{- \frac{t}{2r^2} }  =  \rho^{\frac{1}{k}} \left( \frac{k(k-1)}{2} t \right)$, equivalently 
$e^{- \frac{s}{r^2(k-1)}}  =  \rho \left( s \right)$. The proof is complete.
\end{proof}
\vskip 3mm
\noindent
\begin{remark}
Note that for $k=2$ the above equality characterization clearly fails: the equality    
$\text{\rm vol}_n(K) \text{\rm  vol}_n(K^{\circ}) = (\text{\rm  vol}_n(B^n_2))^2$ holds if and only if $K$ is an ellipsoid. This follows obviously from the  linear invariance
of the Blaschke--Santalo functional for two sets.
\end{remark}
\vskip 3mm
\noindent
The Blaschke--Santal\'o inequality for convex bodies is closely related to affine isoperimetric inequalities which involve the $L_p$-affine surface area.
For a convex body $K$ with centroid at $0$, and for $-\infty \leq p \leq \infty$, $p \neq -n$ it is defined as (see, e.g., \cite{Lutwak1996, SchuettWerner2004}), 
\begin{equation} \label{def:p-affine}
as_{p}(K)=\int_{\partial K}  \frac{ \kappa_K(x)^{\frac{p}{n+p}}   }{ \langle x , N_{K} (x) \rangle^{\frac{n(p-1)}{n+p}} } \
 d\mu_{ K}(x),
\end{equation}
where $\mu_K$ the Hausdorff measure on $\partial K$, the boundary of $K$, 
$N_K(x)$ is the outer unit normal at $x \in \partial K$ and $\kappa_K(x) $ is the generalized  Gauss curvature  at $x \in \partial K$.
Note   that $as_{0} (K) = n \, \text{vol}_n(K)$, and if $K$ is  $C^2_+$, then $as_{\pm \infty} (K) = n \, \text{vol}_n(K^\circ) $.
\par
\noindent
The $L_p$-affine isoperimetric inequalities state that 
for $0 \leq p \leq \infty$, 
\begin{equation}\label{pasa1}
 \frac{as_p(K)}{as_p(B^n_2)}\leq \left(\frac{\text{vol}_n(K)}{\text{vol}_n(B^n_2)}\right)^{\frac{n-p}{n+p}}
\end{equation}
and for $-n<p \leq 0$,
\begin{equation*}\label{pasa2}
\frac{as_p(K)}{as_p(B^n_2)}\geq
\left(\frac{\text{vol}_n(K)}{\text{vol}_n(B^n_2)}\right)^{\frac{n-p}{n+p}}.     
\end{equation*}
Equality holds trivially if $p=0$.
In both cases
equality  holds for $p \ne 0$ if and only if $K$ is an ellipsoid. 
If $-\infty \leq p < -n$ and $K$ is $C^2_+$, then
\begin{equation} \label{pasa3}
c^{\frac{np}{n+p}}
\left(\frac{\text{vol}_n(K)}{\text{vol}_n(B^n_2)}\right)^{\frac{n-p}{n+p}} \leq \frac{as_p(K)}{as_p(B^n_2 )},
\end{equation}
with a constant $c>0$ not depending on the dimension.
These inequalities  were proved by Lutwak \cite{Lutwak1996} for $p > 1$ and for all other $p$ by Werner and Ye
\cite{WernerYe2008}. The case $p=1$ is the classical case.
\vskip 3mm
\noindent
Theorem \ref{BS-sets2} leads to a multi-set ``affine" isoperimetric inequality.
\vskip 2mm
\noindent
\begin{proposition}\label{asp-sets}
Let $K_i$, $1 \le i \le k$, be  unconditional convex bodies  in $\mathbb{R}^n$ 
such that 
$$
\prod_{i=1}^k  e^{- \frac{1}{2} \|x_i\|^2_{K_i}}\leq \rho \left( \sum_{i=1, i<j}^k \langle x_i, x_j\rangle \right) \hskip 2mm \text{for every} \hskip 2mm x_i, x_j \in \mathbb{R}_+^n,
$$
where $\rho$ is a  positive non-increasing function on $[0,\infty)$ such that  $\int_{\mathbb{R}}\rho^{\frac{1}{k}}(t^2) dt <\infty$. 
Then we have for $0 \leq p < n$
\begin{eqnarray*} 
\prod_{i=1}^k \frac{as_p(K_i)}{as_p(B^n_2)} \leq \left( \frac{1}{(2\pi)^\frac{n}{2}} \int _{\mathbb{R}^n}  \rho^{\frac{1}{k}} \Bigl( \frac{k(k-1)}{2} \, |x|^2 \Bigr) dx\right)^{k  \frac{n-p}{n+p}}.
\end{eqnarray*}
For $k >2$, equality holds if and only if  
\vskip 2mm
1.  \hskip 2mm $K_i = r \, B^n_2$ for all $i$, where  $r>0$ is a constant, 
\vskip 2mm
2. \hskip 2mm $\rho(t) = e^{-\frac{t}{(k-1)r^2}}$. 
\vskip 3mm
\noindent
In particular,  if $\rho(t)= e^{-\frac{t}{k-1}}$ and if $\sum_{i=1, i< j }^k \langle x_i, x_j\rangle \leq \frac{k-1}{2} \sum_{i=1}^k \|x_i\|^2_{K_i}$, then 
we have that 
\begin{equation}\label{multi-set-asa}
\prod_{i=1}^k as_p(K_i) \le  \left(as_p(B^n_2)\right)^k
\end{equation}
and equality holds if and only if $K_i=B^n_2$ for all $1 \leq i\leq k$.
\vskip 3mm
\noindent
If $p=n$, then 
\begin{equation}\label{multi-set-n-asa}
\prod_{i=1}^k as_p(K_i) \le  \left(as_p(B^n_2)\right)^k
\end{equation}
and  equality holds if and only if $K_i$ is an ellipsoid for all $1 \leq i\leq k$.
 \end{proposition}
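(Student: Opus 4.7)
My plan is to deduce Proposition \ref{asp-sets} directly from the $L_p$-affine isoperimetric inequality (\ref{pasa1}) combined with the volume bound of Theorem \ref{BS-sets2}; these two ingredients already contain all the substance, and the proof is essentially a reduction.

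First, for each fixed $i$ and for $0 \le p < n$, I apply (\ref{pasa1}) to $K_i$ to obtain
$$
\frac{as_p(K_i)}{as_p(B_2^n)} \le \left(\frac{\text{vol}_n(K_i)}{\text{vol}_n(B_2^n)}\right)^{\frac{n-p}{n+p}},
$$
and then multiply these $k$ inequalities together. By Theorem \ref{BS-sets2} (whose hypothesis is exactly the one we have assumed), the product of volumes on the right is controlled by
$$
\prod_{i=1}^k \text{vol}_n(K_i) \le \left(\frac{\text{vol}_n(B_2^n)}{(2\pi)^{n/2}}\right)^k \left(\int_{\mathbb{R}^n} \rho^{1/k}\left(\tfrac{k(k-1)}{2}|x|^2\right)dx\right)^k,
$$
so substituting and cancelling the $\text{vol}_n(B_2^n)$ factors gives exactly the claimed bound. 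The specialization $\rho(t)=e^{-t/(k-1)}$ follows at once because then $\rho^{1/k}\bigl(\tfrac{k(k-1)}{2}|x|^2\bigr)=e^{-|x|^2/2}$, whose integral is $(2\pi)^{n/2}$, so the right-hand side reduces to $1$ and yields (\ref{multi-set-asa}).

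For the equality analysis in the range $0 < p < n$ with $k > 2$, I would argue that equality in the product inequality forces equality simultaneously in every instance of (\ref{pasa1}) and in the volume bound of Theorem \ref{BS-sets2}. The former forces each $K_i$ to be an ellipsoid, while the latter, by the equality characterization of Theorem \ref{BS-sets2}, forces $K_i=rB_2^n$ and $\rho(t)=e^{-t/((k-1)r^2)}$ for some $r>0$; these two conditions are obviously compatible, with the ball case being the more restrictive one. The endpoint $p=0$ needs slightly separate handling because (\ref{pasa1}) is then an identity ($as_0 = n\,\text{vol}_n$ and the exponent equals $1$), so the ellipsoid rigidity is vacuous and equality reduces to equality in Theorem \ref{BS-sets2} alone; this still yields the same ball characterization.

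The case $p=n$ is actually independent and does not require the product hypothesis at all: since $\frac{n-p}{n+p}=0$, inequality (\ref{pasa1}) degenerates to $as_n(K_i) \le as_n(B_2^n)$ individually for each $i$, with equality iff $K_i$ is an ellipsoid, and taking products gives (\ref{multi-set-n-asa}). The only subtle point in the whole argument is precisely this bifurcation at the endpoints $p=0$ and $p=n$ of $L_p$-affine isoperimetric inequality — at $p=0$ the inequality collapses, pushing all rigidity onto Theorem \ref{BS-sets2}, while at $p=n$ the volume factor collapses, decoupling the $k$ bodies entirely — but neither presents a real obstacle.
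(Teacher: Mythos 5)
Your proposal is correct and follows essentially the same route as the paper: chain the $L_p$-affine isoperimetric inequality (\ref{pasa1}) for each body with the volume bound of Theorem \ref{BS-sets2}, and read off the equality cases from the rigidity in each of the two inequalities (ellipsoids from (\ref{pasa1}), the ball $rB_2^n$ together with $\rho(t)=e^{-t/((k-1)r^2)}$ from Theorem \ref{BS-sets2}), with the $p=n$ case coming from (\ref{pasa1}) alone. Your explicit remarks on the endpoint behaviour at $p=0$ and $p=n$ are consistent with what the paper does implicitly.
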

\vskip 2mm
\noindent
\begin{proof}
Let $0 \leq p \leq n$. By the affine isoperimetric inequality and Theorem \ref{BS-sets2} we get 
\begin{eqnarray} \label{K-asa-equality}
&&\prod_{i=1}^k as_p(K_i) \leq  \left(as_p(B^n_2)\right)^k  \prod_{i=1}^k \left(\frac{ \text{vol}_n(K_i)}{\text{vol}_n(B^n_2)}\right)^{\frac{n-p}{n+p}}  \nonumber \\
&&\leq \left(as_p(B^n_2)\right)^k \left( \frac{1}{(2\pi)^\frac{n}{2}} \int _{\mathbb{R}^n}  \rho^{\frac{1}{k}} \Bigl( \frac{k(k-1)}{2} \, |x|^2 \Bigr) dx\right)^{k  \frac{n-p}{n+p}}.
\end{eqnarray}
The first inequality shows that for $p=n$, 
$$\prod_{i=1}^k as_n(K_i) \leq  \left(as_n(B^n_2)\right)^k.$$
\par
\noindent
If $\rho(t)= e^{-\frac{t}{k-1}}$, then we have for all $0 \leq p \leq n$, 
\begin{eqnarray} \label{Special-K-asa-equality}
\prod_{i=1}^k as_p(K_i) \leq  \left(as_p(B^n_2)\right)^k \prod_{i=1}^k \left(\frac{\text{vol}_n(K_i)}{\text{vol}_n(B^n_2)}\right)^{\frac{n-p}{n+p}} \leq  \left(as_p(B^n_2)\right)^k
\end{eqnarray}
The equality characterizations follow from Theorem \ref{BS-sets2} and the equality characterizations of the above affine isoperimetric inequalities.
\par
\noindent
Indeed, by the affine isoperimetric inequality, equality holds in the first inequality of (\ref{K-asa-equality}) if and only if $K_i=T_i B^n_2$, where $T_i$ is a linear invertible map. By Theorem \ref{BS-sets2},  equality holds  in the second inequality of (\ref{K-asa-equality})
if and only if  
$K_i = r \, B^n_2$ for all $i$, where  $r>0$ is a constant, 
and  $\rho(t) = e^{-\frac{t}{(k-1)r^2}}$. 
\end{proof}
\vskip 2mm
\noindent
\begin{remark}  (i) For $p=n$, the inequality is just the affine isoperimetric inequality  (\ref{pasa1}). As $as_0(K) = n \text{vol}_n(K)$, the inequalities 
of the theorem for $p=0$ are just the inequalities of Theorem \ref{BS-sets2}.
\vskip 2mm
\noindent 
(ii) The corresponding inequalities for $- \infty \leq p < -n$ also hold, using  (\ref{pasa3}).
\end{remark}
\vskip 3mm
\noindent
A further multiple set version of the Blaschke--Santal\'o inequality is given in the next proposition.
\vskip 2mm
\noindent
\begin{proposition}
\label{BS-sets}
Let $K_i$, $1 \le i \le k$, be unconditional convex bodies  in $\mathbb{R}^n$ with  non-empty interior and 
radial functions $r_i= r_{K_i}$.
Assume that for all $x_i = ((x_i)_1,   \cdots, (x_i)_n) \in S^{n-1}$, 
\begin{equation}
\label{rixiinequality}
\prod_{i=1}^k r_i(x_i) \le \frac{1}{\Bigl( \sum_{j=1}^n \bigl( |(x_1)_j|^{\frac{1}{k}} \cdots |(x_k)_j|^{\frac{1}{k}} \bigr)^2  \Bigr)^{\frac{k}{2}}}. 
\end{equation}
Then
$$
\prod_{i=1}^k \text{\rm vol}_n(K_i) \le  \left(\text{\rm vol}_n(B^n_2)\right)^k.
$$ 
\end{proposition}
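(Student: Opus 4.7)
The plan is to reduce the statement to a direct application of the Pr\'ekopa--Leindler inequality via the same exponential change of variables used in the proof of Theorem \ref{BSunc}. Since each $K_i$ and $B^n_2$ are unconditional, $\text{\rm vol}_n(K_i)=2^n\,\text{\rm vol}_n(K_i\cap\mathbb{R}^n_+)$, so it suffices to prove the inequality on the positive orthant and then multiply by $(2^n)^k$ at the end.

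The key preliminary step is to convert hypothesis (\ref{rixiinequality}), which is stated on $S^{n-1}$, into a clean condition on arbitrary points $y_i\in K_i\cap\mathbb{R}^n_+$. Writing $y_i=\rho_i u_i$ with $u_i\in S^{n-1}\cap\mathbb{R}^n_+$ and $\rho_i=|y_i|\le r_i(u_i)$, so $(y_i)_j=\rho_i(u_i)_j$, one computes
$$\sum_{j=1}^n \prod_{i=1}^k (y_i)_j^{2/k} \;=\; \Bigl(\prod_{i=1}^k \rho_i\Bigr)^{\!2/k}\sum_{j=1}^n\prod_{i=1}^k (u_i)_j^{2/k}\;\le\;\Bigl(\prod_{i=1}^k r_i(u_i)\Bigr)^{\!2/k}\sum_{j=1}^n\prod_{i=1}^k (u_i)_j^{2/k}\;\le\;1,$$
where the last step is (\ref{rixiinequality}) rewritten as $(\prod_i r_i(u_i))^{2/k}\sum_j\prod_i(u_i)_j^{2/k}\le 1$. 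Thus, for every $(y_1,\dots,y_k)\in\prod_i(K_i\cap\mathbb{R}^n_+)$, one has $\sum_{j=1}^n\prod_{i=1}^k(y_i)_j^{2/k}\le 1$.

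With this in hand, I would change variables $y_i=e^{t_i}$ componentwise, so that $\text{\rm vol}_n(K_i\cap\mathbb{R}^n_+)=\int \1_{\tilde K_i}(t_i)\,e^{\sum_m(t_i)_m}\,dt_i$ with $\tilde K_i=\{t_i:e^{t_i}\in K_i\}$. Putting $t=\frac{1}{k}\sum_i t_i$, the constraint above reads $\sum_j e^{2(t)_j}=|e^t|^2\le 1$ whenever all $t_i\in\tilde K_i$, i.e., $e^t\in B^n_2\cap\mathbb{R}^n_+$. Applying Pr\'ekopa--Leindler with equal weights $1/k$ to $g_i(t_i)=\1_{\tilde K_i}(t_i)\,e^{\sum_m(t_i)_m}$ and using $\prod_i g_i^{1/k}(t_i)=(\prod_i\1_{\tilde K_i}(t_i))\,e^{\sum_m(t)_m}$ gives
$$\prod_{i=1}^k\Bigl(\int g_i\,dt_i\Bigr)^{\!1/k}\le\int_{\mathbb{R}^n}\sup_{t=\frac{1}{k}\sum_i t_i}\prod_{i=1}^k g_i^{1/k}(t_i)\,dt\le\int_{\mathbb{R}^n}\1_{\{|e^t|\le 1\}}(t)\,e^{\sum_m(t)_m}\,dt=\text{\rm vol}_n(B^n_2\cap\mathbb{R}^n_+),$$
where the last equality uses the reverse substitution $u=e^t$. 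Since $\int g_i\,dt_i=\text{\rm vol}_n(K_i\cap\mathbb{R}^n_+)$, raising to the $k$-th power and multiplying by $(2^n)^k$ yields $\prod_i\text{\rm vol}_n(K_i)\le(\text{\rm vol}_n(B^n_2))^k$. The only step of real content is the opening algebraic reformulation of (\ref{rixiinequality}) as $\sum_j\prod_i(y_i)_j^{2/k}\le 1$; once the hypothesis is placed in this symmetric form, the exponential substitution converts it to the familiar Euclidean ball constraint $|e^t|\le 1$, and the remainder is essentially the Pr\'ekopa--Leindler argument already employed in the proof of Theorem \ref{BSunc}.
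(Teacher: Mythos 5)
Your proof is correct, and it is in fact a streamlined version of the paper's argument: both rest on the same engine, the exponential change of variables $x=e^{t}$ combined with the Pr\'ekopa--Leindler inequality, exactly as in the proof of Theorem \ref{BSunc}, but you feed the hypothesis into that engine differently. You first translate the spherical assumption (\ref{rixiinequality}), via homogeneity (writing $y_i=\rho_i u_i$ with $\rho_i=|y_i|\le r_i(u_i)$), into the pointwise constraint $\sum_{j}\prod_{i}(y_i)_j^{2/k}\le 1$ for arbitrary $y_i\in K_i\cap\mathbb{R}^n_+$, which after the substitution becomes the ball constraint $|e^{t}|\le 1$, and then apply Pr\'ekopa--Leindler once, directly to the functions $\1_{\tilde K_i}(t_i)\,e^{\sum_m (t_i)_m}$ whose integrals are the orthant volumes. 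The paper instead keeps the radial functions in play: it applies the same change of variables and Pr\'ekopa--Leindler to $r_i^m(e^{t_i})\,\1_{\{|e^{t_i}|\le 1\}}\,e^{\sum_j (t_i)_j}$ for every exponent $1\le m<n$, obtains the moment inequalities $\prod_i\int_{S^{n-1}} r_i^m\,d\sigma\le\sigma(S^{n-1})^k$ after introducing the rotationally invariant measures $d\mu_m\propto \1_{B_2^n}|u|^{-m}du$, and only then passes to the limit $m\to n$ by Fatou's lemma and uses $\int_{S^{n-1}} r_i^n\,d\sigma = n\,\mathrm{vol}_n(K_i)$ to conclude. Your route avoids the auxiliary measures, the rotational-invariance step and the limiting argument entirely, at the cost of proving only the volume statement, whereas the paper's route yields the whole family (\ref{rmith}) of spherical moment inequalities for $1\le m\le n$ as a by-product. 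The one step that carries the real content in your version, the reduction of (\ref{rixiinequality}) to $\sum_j\prod_i(y_i)_j^{2/k}\le 1$ on $\prod_i (K_i\cap\mathbb{R}^n_+)$, is stated and justified correctly, so the argument stands as written.
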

\vskip 2mm
\noindent
\begin{proof}
Let $m \in \mathbb{R}$, $1 \leq  m < n$ and put  $x_i = e^{t_i}$. Set $w = \frac{1}{k} \sum_{i=1}^k t_i$.
Then
\begin{equation}\label{T-Sets1}
\prod_{i=1}^k r^m_i(e^{t_i}) \, \1_{\{|e^{t_i}| \le 1\}} \,  e^{\sum_{i, j} (t_i)_j}  \le   \frac{\1_{\{|e^{w}|\le 1\} } \, e^{\sum_{i, j} (t_i)_j}  }{\Bigl( \sum_{j=1}^n e^{2 w_j}   \Bigr)^{\frac{km}{2}}}
=  \frac{\1_{\{|e^{w}| \le 1\}} \,  e^{k \sum_{j=1}^n (w)_j}}{\Bigl(  \sum_{j=1}^n e^{2 w_j}   \Bigr)^{\frac{km}{2}}}.
\end{equation}
We now apply again the change of variables $x_i=e^{t_i}$,  $1 \leq i\leq k$,  the  Pr\'ekopa--Leindler inequality and  (\ref{T-Sets1}), 
\begin{align*}
\left( \prod_{i=1}^k \int_{B^n_2 \cap {\mathbb{R}}^n_{+}} r^m_i \,  dx_i \right)^{\frac{1}{k}} &=  \left( \prod_{i=1}^k  \int_{\mathbb{R}^n}  r^{m}_i(e^{t_i}) \1_{\{|e^{t_i}| \le 1\}} \, e^{\sum_{j} (t_i)_j}   \,  dt_i\right)^{\frac{1}{k}} \\
& \leq   \int_{\mathbb{R}^n} \sup_{w=\frac{1}{k} \sum_{i=1}^k t_i} \left[\prod_{i=1}^k \left(r^\frac{m}{k}_i(e^{t_i}) \, \1_{\{|e^{t_i}| \le 1\}}\,  e^{\frac{1}{k}\sum_{i, j} (t_i)_j}\right)\right]  dw
\\& \leq   \int _{\mathbb{R}^n}\frac{\1_{\{|e^{w}| \le 1\}} \, e^{\sum_{j=1}^n (w)_j}}{\Bigl(  \sum_{j=1}^n e^{2 w_j}   \Bigr)^{\frac{m}{2}}} dw
= \int_{B^n_2  \cap {\mathbb{R}}^n_{+}}  \frac{dx}{|x|^m}.
\end{align*}
Hence by symmetry
$$
\left( \prod_{i=1}^k \int_{B^n_2} r^m_i dx_i \right)^{\frac{1}{k}} \le \int_{B^n_2}  \frac{dx}{| x |^m}.
$$
 Next we observe that every radial function $r_{i}$ satisfies
 $$
 r_{i}(x_i) = r_{i}\left(\frac{x_i}{|x_i|}\right) \frac{1}{|x_i|}.
 $$
For every $1 \leq m<n$, $m \in \mathbb{R}$,  we  introduce the finite probability measure $d\mu_m = \frac{\1_{B^n_2}(u)}{\int_{B^n_2} \frac{du}{|u|^m}} \frac{du}{|u|^m}$. 
The inequality above can then be rewritten as follows, 
$$
\prod_{i=1}^k \int_{B^n_2} r^m_i\left( \frac{x_i}{|x_i|} \right) d \mu_m \le 1.
$$
Since $\mu_m$ is rotational invariant, the above inequality can be rewritten as
\begin{equation}
\label{rmith}
\prod_{i=1}^k \int_{S^{n-1}} r^m_{i}(\theta) \, d \sigma(\theta)
\le \sigma(S^{n-1})^k,
\end{equation}
where $\sigma$ is the $(n-1)$-dimensional Hausdorff measure. 
Passing to the limit $m \to n$ and applying the Fatou's Lemma one  gets that 
(\ref{rmith}) holds for  $m=n$.
On the other hand, for $m=n$ one has for all $i$
\begin{equation}
\label{rni}
\int_{S^{n-1}} r^n_{i}(\theta) \,  d \sigma(\theta) = \sigma(S^{n-1}) \frac{\text{vol}_{n}(K_i)}{\text{vol}_{n}(B^n_2)}.
\end{equation}
From this we derive  the desired estimate.
\end{proof}

\vskip 3mm
\noindent

\section{Pr\'ekopa--Leindler and displacement convexity inequalities: refinement of the transportational argument} \label{transport}
In this section we recall  the transportational arguments of F.~Barthe \cite{Barthe}
in his proof of the reverse Brascamp--Lieb inequality.
We show that the use of barycenters gives certain refinements  of the Pr\'ekopa--Leindler inequality.
\par
\noindent
{\bf In this section  we do  not assume that the functions $f_i$ are even}. 

\vskip 3mm
\noindent
Let $f_i$, $1 \leq i \leq k$, be nonnegative integrable functions and $\lambda_i \in [0,1]$ be numbers such that  $\sum_{i=1}^k \lambda_i =1$, and let
$d \mu = p(x) dx$ be a probability measure.
For every $i$, $\nabla \Phi_i$  is the optimal transportation mapping
that  pushes forward $\mu$ onto $\mu_i=f_i \, dx$.
\vskip 2mm
\noindent
In what follows we apply the change of variables formula for the optimal transportation mapping.
In that form it was established by R. McCann (see \cite{Villani}, Theorem 4.8), 
$$
p(x) = \frac{f_i(\nabla \Phi_i)}{ \int f_i dx_i} \det D^2_a \Phi_i(x), 
$$
where $D^2_a \Phi_i$ is the absolutely continuous part of the distributional Hessian $D^2 \Phi_i$ of $\Phi_i$.
In particular, it is a nonnegative matrix-valued measure. 
This formula holds almost everywhere with respect to Lebesgue measure.
We will also apply below the following results
\begin{itemize}
\item
The arithmetic-geometric mean inequality
$$
\prod_{i=1}^k {(\det A_i )}^{\lambda_i} \le \det \Bigl( \sum_{i=1}^ k \lambda_i A_i\Bigr),
$$
where the $A_i$ are symmetric nonnegative matrices, $\lambda_i \ge 0$, $\sum_{i=1}^k \lambda_i =1$.
\item
The inequality between the distributional Hessian and its absolutely continuous part
$$
0 \le D^2_a \Phi_i \le D^2 \Phi_i.
$$ 
\end{itemize}
First, we get by the arithmetic-geometric mean inequality
\begin{eqnarray}
\label{keyinequality0}
p(x) &=&
 \prod_{i=1}^{k} \left(  \frac{f_i(\nabla \Phi_i)(x)}{ \int f_i dx_i} \det D^2_a \Phi_i(x) \right)^{\lambda_i}
 \le 
  \prod_{i=1}^{k} \left(  \frac{f_i(\nabla \Phi_i)(x)}{ \int f_i dx_i}  \right)^{\lambda_i} \det \left( \sum_{i=1}^k \lambda_i D^2_a \Phi_i (x)\right)
\nonumber \\
&\le& 
\sup_{\left\{y_i: \sum_{i} \lambda_i y_i = \sum_{i} \lambda_i \nabla \Phi_i(x) \right\}} \prod_{i=1}^{k} \left(  \frac{f_i(y_i)}{ \int f_i dx_i}  \right)^{\lambda_i} 
\det \left( \sum_{i=1}^k \lambda_i D^2_a \Phi_i (x)\right).
\end{eqnarray}
In the proof of Barthe, one fixes an \emph {arbitrary} measure $\mu$ and integrates inequality (\ref{keyinequality0}). By the change of  variables
  $y =  \sum_{i} \lambda_i \nabla \Phi_i(x) $,  we get the Pr\'ekopa--Leindler inequality
  $$
  \prod_{i=1}^{k} \left( { \int f_i dx_i}  \right)^{\lambda_i}  \le 
  \int  \sup_{\{y_i: \sum_{i} \lambda_i y_i = y \}} \prod_{i=1}^{k} {f^{\lambda_i}_i(y_i)} dy.
  $$
\vskip 3mm
\noindent
If instead of an arbitrary measure $\mu$,  we apply this result to the barycenter of the $\mu_i's$, we obtain the following
pointwise refinement of the Pr\'ekopa--Leindler inequality.
\vskip 2mm
\noindent
\begin{theorem} \label{ppli}
{\bf (Pointwise Pr\'ekopa--Leindler inequality)}
Let $\mu$ be the barycenter of the $\mu_i$ with weights $\lambda_i$. Then it has a density $p$ satisfying
\begin{equation}
\label{PBL}
\prod_{i=1}^k \left( \int f_i dx_i \right)^{\lambda_i} p(x) \le \sup_{x = \sum_{i=1}^k \lambda_i y_i } \prod_{i=1}^k f^{\lambda_i}_i(y_i),  \hskip 2mm \text{for} \hskip 2mm p-a.e. x.
\end{equation}
\end{theorem}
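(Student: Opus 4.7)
The plan is to push the calculation in (\ref{keyinequality0}) a little further by exploiting that $\mu$ is \emph{the} barycenter rather than an arbitrary probability measure. Inequality (\ref{keyinequality0}) is derived for any $\mu$ with optimal transports $\nabla\Phi_i$ to the $\mu_i$, using McCann's change of variables and the arithmetic-geometric mean inequality for the Jacobians, and it already reads
\[
p(x) \le \sup_{\{y_i : \sum_i \lambda_i y_i = \sum_i \lambda_i \nabla\Phi_i(x)\}} \prod_{i=1}^k \Bigl(\frac{f_i(y_i)}{\int f_i\,dx_i}\Bigr)^{\lambda_i} \det\Bigl(\sum_{i=1}^k \lambda_i D^2_a \Phi_i(x)\Bigr).
\]
So the proof reduces to two observations specific to the barycenter.

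First, by item 3 of Theorem \ref{barycenter-th}, the barycenter satisfies $\sum_{i=1}^k \lambda_i \nabla \Phi_i(x) = x$ for $\mu$-a.e. $x$. This identifies the constraint in the supremum as simply $\sum_i \lambda_i y_i = x$, giving the expression on the right-hand side of (\ref{PBL}).

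Second, one needs to control the determinant factor $\det\bigl(\sum_i \lambda_i D^2_a \Phi_i(x)\bigr)$. Here I would differentiate the identity $\sum_i \lambda_i \nabla \Phi_i(x) = x$ in the distributional sense: since the $\Phi_i$ are convex, each $D^2 \Phi_i$ is a nonnegative matrix-valued measure, and the identity forces
\[
\sum_{i=1}^k \lambda_i D^2 \Phi_i = I
\]
as distributional Hessians on the (open full-measure) set where $\sum_i \lambda_i \nabla \Phi_i(x) = x$. Using the second bullet point recalled before (\ref{keyinequality0}), namely $0 \le D^2_a \Phi_i \le D^2 \Phi_i$, one gets $\sum_i \lambda_i D^2_a \Phi_i(x) \le I$ as symmetric nonnegative matrices almost everywhere, whence $\det\bigl(\sum_i \lambda_i D^2_a \Phi_i(x)\bigr) \le 1$.

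Combining these two observations with (\ref{keyinequality0}) and clearing the normalization constants $\int f_i\,dx_i$ to the left-hand side yields exactly (\ref{PBL}). The only delicate point is the distributional differentiation step: the $\Phi_i$ need only be convex (not $C^2$), so one must interpret $\sum_i \lambda_i D^2 \Phi_i = I$ as an equality of matrix-valued Radon measures and then take Lebesgue decompositions, using that the singular parts of the $D^2 \Phi_i$ are nonnegative and hence their $\lambda_i$-weighted sum must be zero $\mu$-a.e. for the equality with the absolutely continuous measure $I\,dx$ to hold. This is the main technical obstacle, but it is standard given the convexity of the potentials.
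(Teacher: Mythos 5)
Your proposal is correct and takes essentially the same route as the paper: the paper also combines (\ref{keyinequality0}) with Theorem \ref{barycenter-th}, item 3, using the resulting identity $\sum_{i=1}^k \lambda_i \Phi_i(x) = \frac{|x|^2}{2}$ (equivalently $\sum_i \lambda_i \nabla\Phi_i(x)=x$) $\mu$-a.e.\ to identify the constraint set and to bound $\sum_i \lambda_i D^2_a\Phi_i(x) \le D^2\bigl(\sum_i \lambda_i \Phi_i\bigr) = I$, so the determinant factor is at most $1$. The only inaccuracy is your parenthetical claim that the identity holds on an \emph{open full-measure} set: it holds only $\mu$-a.e.\ and $\mu$ may have small support, so the global measure identity $\sum_i\lambda_i D^2\Phi_i = I$ is not literally available; however, since the conclusion is only needed $p$-a.e., the Hessian bound can be justified at Alexandrov points that are density points of $\{p>0\}$, and the paper argues at the same level of informality.
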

\par
\noindent
\begin{proof}
By the arithmetic-geometric mean inequality one has
$$
\prod_{i=1}^k \Bigl( \det D^2_a \Phi_i(x) \Bigr)^{\lambda_i} \le \det \Bigl( \sum_{i=1}^k \lambda_i D^2_a \Phi_i(x)  \Bigr).
$$
Since  $\sum_{i=1} \lambda_i \Phi_i(x) =\frac{|x|^2}{2}$ for
$p(x)\,dx$-almost all $x$,  (see Theorem \ref{barycenter-th}, 3), one gets
$$
\sum_{i=1}^k \lambda_i D^2_a \Phi_i(x) \le D^2 \bigl(\sum_{i=1} \lambda_i \Phi_i(x) \bigr) = I
$$
$p(x)\,dx$-a.e.
Using this inequality and inequality
(\ref{keyinequality0}) one gets the result.
\end{proof}
\vskip 2mm
\noindent
\begin{remark}
Following the proof,  one can easily get the equality characterization for the Pr\'ekopa--Leindler inequality. Indeed, we have  equality 
in the arithmetic-geometric mean inequality if and only if $D^2_a \Phi_i(x)$ are all equal for almost all $x$.  Next, from the relation  
$\sum_{i=1} \lambda_i \Phi_i(x) =\frac{|x|^2}{2}$ one can easily get that every $\Phi_i$ has the form $\Phi_i(x) = \frac{|x|^2}{2} + \langle x, a_i \rangle + b_i $. This easily implies that the $f_i$ differ by shifts. The rest of the proof is standard. 
\end{remark}
\vskip 3mm
\noindent
Let us rewrite (\ref{keyinequality0}) in terms of the standard Gaussian reference measure 
$d\gamma = \frac{e^{-\frac{|x|^2}{2}} }{(2 \pi)^{\frac{n}{2}}} dx$.
\vskip 2mm
\noindent
\noindent
\begin{corollary}
Let $f_i dx_i = \rho_i \cdot d\gamma$ be probability  measures and let $d\mu = \rho \cdot d\gamma$ be their barycenter.
Then $\mu$-a.e.
\begin{equation}
\label{2111}
\rho(x) \, e^{\frac{1}{2} \sum_{i=1}^k \lambda_i |\nabla \Phi_i(x) -x|^2} \le \prod_{i=1}^k  \rho^{\lambda_i}_i(\nabla \Phi_i).
\end{equation}
\end{corollary}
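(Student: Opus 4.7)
The plan is to deduce the stated inequality directly from the key pointwise estimate (\ref{keyinequality0}) combined with the barycenter identity $\sum_{i=1}^k \lambda_i \Phi_i(x) = |x|^2/2$ from Theorem \ref{barycenter-th}. The whole proof is essentially a bookkeeping exercise: rewrite everything in terms of the Gaussian densities $\rho_i$, use the barycenter constraint to control the Hessian determinant by $1$, and then recognize the resulting exponent as a weighted sum of squared displacements.

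First I would substitute $f_i = \rho_i \cdot (2\pi)^{-n/2} e^{-|x_i|^2/2}$ and $p = \rho \cdot (2\pi)^{-n/2} e^{-|x|^2/2}$ into (\ref{keyinequality0}). Since the $\mu_i$ are probability measures, $\int f_i\, dx_i = 1$, and since $\sum_i \lambda_i = 1$ the Gaussian normalizing constants cancel, yielding
\begin{equation*}
\rho(x)\, e^{-|x|^2/2} \le \prod_{i=1}^k \rho_i(\nabla\Phi_i)^{\lambda_i}\, e^{-\tfrac12\sum_i \lambda_i |\nabla\Phi_i|^2}\, \det\Bigl(\sum_{i=1}^k \lambda_i D^2_a \Phi_i(x)\Bigr).
\end{equation*}

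Next I would invoke the barycenter identity $\sum_i \lambda_i \Phi_i(x) = |x|^2/2$ (true $\mu$-a.e.\ by item 3 of Theorem \ref{barycenter-th}), which implies $\sum_i \lambda_i D^2_a \Phi_i(x) \le I$ $\mu$-a.e., and therefore $\det\bigl(\sum_i \lambda_i D^2_a \Phi_i(x)\bigr) \le 1$. This is exactly the step that was used in the proof of Theorem \ref{ppli}, so I would just quote that reasoning. Dividing the displayed inequality by $e^{-|x|^2/2}$ leaves
\begin{equation*}
\rho(x)\, e^{\tfrac12 \sum_i \lambda_i |\nabla\Phi_i|^2 - \tfrac12 |x|^2} \le \prod_{i=1}^k \rho_i(\nabla\Phi_i)^{\lambda_i}.
\end{equation*}

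Finally, I would rewrite the exponent. Expanding $|\nabla \Phi_i - x|^2 = |\nabla\Phi_i|^2 - 2\langle \nabla\Phi_i, x\rangle + |x|^2$ and summing against $\lambda_i$ gives
\begin{equation*}
\tfrac12\sum_i \lambda_i |\nabla\Phi_i - x|^2 = \tfrac12 \sum_i \lambda_i |\nabla\Phi_i|^2 - \bigl\langle \sum_i \lambda_i \nabla\Phi_i, x\bigr\rangle + \tfrac12 |x|^2,
\end{equation*}
and the barycenter identity differentiated gives $\sum_i \lambda_i \nabla\Phi_i(x) = x$ $\mu$-a.e., so the middle term is $-|x|^2$, and the right-hand side collapses to $\tfrac12 \sum_i \lambda_i |\nabla\Phi_i|^2 - \tfrac12 |x|^2$. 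Substituting this identification for the exponent yields (\ref{2111}). There is no real obstacle; the only subtlety is being careful that every manipulation holds $\mu$-a.e., which follows because both the McCann change of variables and the barycenter identity are valid on a set of full $\mu$-measure.
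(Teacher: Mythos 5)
Your proposal is correct and follows essentially the same route as the paper: substitute the Gaussian-weighted densities into (\ref{keyinequality0}), control $\det\bigl(\sum_i \lambda_i D^2_a \Phi_i\bigr)$ by $1$ via the barycenter relation as in the proof of Theorem \ref{ppli}, and then rewrite the exponent using $\sum_i \lambda_i \nabla \Phi_i(x) = x$ from Theorem \ref{barycenter-th}. The only difference is cosmetic: you make explicit the determinant bound that the paper leaves implicit, and you expand $|\nabla\Phi_i - x|^2$ directly where the paper adds the vanishing term $\langle \nabla\Phi_i - x, x\rangle$, which is the same computation.
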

\par
\noindent
\begin{proof}
Applying the first inequality of (\ref{keyinequality0}) to $f_i=\rho_i \,  \frac{e^{-\frac{|x|^2}{2}} }{(2 \pi)^{\frac{n}{2}}}$ and $p=\rho \,  \frac{e^{-\frac{|x|^2}{2}} }{(2 \pi)^{\frac{n}{2}}}$, we get
$$
\rho(x) \,  e^{-\frac{|x|^2}{2}} \le   \prod_{i=1}^k  \rho^{\lambda_i}_i(\nabla \Phi_i) e^{-\lambda_i \frac{|\nabla \Phi_i|^2}{2}}.
$$
Also using Theorem \ref{barycenter-th}, 3.,  we finally observe that
\begin{align*}
\sum_{i=1}^k \lambda_i \left( \frac{|\nabla \Phi_i(x)|^2}{2} - 
\frac{|x|^2}{2}\right)
& = \sum_{i=1}^k \lambda_i \left( \frac{|\nabla \Phi_i(x)|^2}{2} - 
\frac{|x|^2}{2} - \langle \nabla \Phi_i(x) - x, x \rangle \right) 
\\& = 
\frac{1}{2} \sum_{i=1}^k \lambda_i |\nabla \Phi_i(x) -x|^2.
\end{align*}
\end{proof}
\vskip 2mm
\noindent
Integrating pointwise inequality (\ref{PBL})  we get the Pr\'ekopa--Leindler inequality.
Taking logarithm of (\ref{2111}) and integrating we get the displacement convexity property of the Gaussian entropy, 
\begin{equation}
\label{entw2}
{\rm Ent}_{\gamma} (\mu) + \frac{1}{2} \sum_{i=1}^k \lambda_i  W^2_2(\mu, \mu_i ) \le \sum_{i=1}^k \lambda_i {\rm Ent}_{\gamma} (\mu_i). 
\end{equation}
This result was proved in \cite{AguehC}.
\vskip 3mm
\noindent
Mimicking the arguments that were used in the proof of (\ref{keyinequality0})  leads to the following result.
\vskip 2mm
\noindent
\begin{theorem}
Let $f_i$, $1 \leq i \leq k$,  be integrable functions satisfying 
\begin{equation}
\label{0612ass}
\prod_{i=1}^k  f^{\lambda_i}_i (x_i) \le  g \left(\sum_{i=1}^k \lambda_i x_i \right),
\end{equation}
where $\lambda_i \in [0,1], \sum_{i=1}^k \lambda_i =1$ and $g$ is a nonegative function.
Then  for $\rho dx$-almost all $x$, 
\begin{equation}
\label{0612}
\prod_{i=1}^k \left( \int f_i dx_i \right)^{\lambda_i}  \rho(x) \le  g(x), 
\end{equation}
where $\rho(x) dx$ is the barycenter of the measures $\frac{f_i}{\int f_i dx_i} dx_i$ with weights $\lambda_i$. 
\end{theorem}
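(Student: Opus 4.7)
The plan is to mimic the transportational argument used in the proof of Theorem \ref{ppli}. Let $\nabla \Phi_i$ denote the Brenier optimal transport maps pushing the barycenter $\mu = \rho \cdot dx$ forward onto the probability measures $\mu_i = \frac{f_i}{\int f_i\,dx_i}\,dx_i$. Since $\mu$ is the barycenter of the $\mu_i$ with weights $\lambda_i$, Theorem \ref{barycenter-th}(3) supplies the key identity
$$\sum_{i=1}^k \lambda_i \nabla \Phi_i(x) = x \qquad \text{for } \rho\,dx\text{-a.e. } x.$$
Integrating this relation one obtains $\sum_i \lambda_i \Phi_i(x) = \tfrac{|x|^2}{2} + \mathrm{const}$, and hence $\sum_i \lambda_i D^2_a \Phi_i(x) \le I$ in the sense of nonnegative matrices.

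Combining McCann's change of variables formula with the arithmetic-geometric mean inequality for the Hessians, exactly as in (\ref{keyinequality0}), I would derive the pointwise bound
$$\rho(x) \le \prod_{i=1}^k \left(\frac{f_i(\nabla \Phi_i(x))}{\int f_i\,dx_i}\right)^{\lambda_i} \det\Bigl(\sum_{i=1}^k \lambda_i D^2_a \Phi_i(x)\Bigr) \le \prod_{i=1}^k \left(\frac{f_i(\nabla \Phi_i(x))}{\int f_i\,dx_i}\right)^{\lambda_i},$$
where the last step uses $\det(\sum_i \lambda_i D^2_a \Phi_i) \le 1$. This is valid for $\rho\,dx$-almost every $x$.

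Now, in contrast with the proof of Theorem \ref{ppli}, where the right-hand side is dominated by a supremum over all decompositions $x = \sum \lambda_i y_i$, I would directly invoke the standing hypothesis (\ref{0612ass}) with the specific choice $x_i := \nabla \Phi_i(x)$. Since $\sum_i \lambda_i \nabla \Phi_i(x) = x$ holds $\rho$-a.e., the hypothesis gives $\prod_i f_i^{\lambda_i}(\nabla \Phi_i(x)) \le g(x)$, and the desired inequality (\ref{0612}) follows at once. I do not anticipate any real obstacle: the argument is a line-by-line reuse of the transport proof leading to Theorem \ref{ppli}, with the supremum over decompositions collapsing to a single evaluation by virtue of the structural hypothesis on $g$. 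The only point requiring care is that all inequalities hold $\rho$-almost-everywhere, which is inherited from McCann's change of variables formula and from the a.e. identity $\sum_i \lambda_i \nabla \Phi_i(x) = x$.
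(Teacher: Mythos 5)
Your proposal is correct and follows essentially the same route as the paper: McCann's change of variables together with the arithmetic--geometric mean bound on the Hessians (inequality (\ref{keyinequality0})), the barycenter identity $\sum_i \lambda_i \nabla \Phi_i(x)=x$ giving $\det\bigl(\sum_i \lambda_i D^2_a\Phi_i(x)\bigr)\le 1$, and then the hypothesis (\ref{0612ass}). The only cosmetic difference is that you evaluate the hypothesis directly at the decomposition $x_i=\nabla\Phi_i(x)$ rather than passing through the supremum over all decompositions as the paper does; both steps are equivalent here.
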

\par
\noindent
\begin{proof}
Applying inequality (\ref{keyinequality0}) and the relation $\sum_{i=1}^k \lambda_i  \nabla \Phi_i(x) =x$
one immediately gets
$$
\prod_{i=1}^k  \Bigl(\int f_i (x_i) dx_i \Bigr)^{\lambda_i} \rho(x) \le \sup_{\left\{y_i: \sum_{i} \lambda_i y_i = x \right\}} \prod_{i=1}^{k} {f^{\lambda_i}_i(y_i)} \det \bigl( \sum_{i=1}^k \lambda_i D^2 \Phi_i (x)\bigr) \le g(x).
$$
\end{proof}
\vskip 2mm
\noindent
 \begin{remark}
 Assuming  (\ref{0612ass})  and integrating   (\ref{0612}) one gets the  inequality
 \begin{equation}
 \label{110721-2}
 \prod_{i=1} \Bigl( \int f_i dx_i \Bigr)^{\lambda_i}   \le  \int g(x) dx, 
 \end{equation}
 which can be considered as a weak form of the Blaschke--Santal\'o 
 functional inequality,  because it is equivalent to (\ref{lientwi}) (see the explanations in Remark \ref{110721}), which is a weaker version of the displacement convexity property (\ref{entw2}). Inequality (\ref{110721-2}) follows, of course, directly from the Pr\'ekopa--Leindler inequality.
 \end{remark}
\vskip 2mm
\noindent
In particular, assuming that the functions $V_i$ satisfy
$$
\sum_{i=1} \lambda_i V_i(x_i) \ge \frac{1}{2}  \Bigl|\sum_{i=1}^k \lambda_i x_i \Bigr|^2,
$$
one gets
$$ \Bigl( \prod_{i=1}^k \int e^{-V_i(x_i)} \ dx_i \Bigr)^{\lambda_i} \rho(x)
\le  e^{- \frac{|x|^2}{2}}.
$$ 
Rewriting this inequality with respect to the Gaussian reference measure $\gamma$, one gets the following equivalent formulation.
\begin{corollary}
\label{gauss-wbs}
Assume that the measurable functions $F_i$ satisfy
$$
\sum_{i=1}^k \lambda_i F_i(x_i) \le  \frac{1}{2} \Bigl[ \sum_{i=1}^k  \lambda_i |x_i|^2  -  \Bigl|\sum_{j=1}^k \lambda_j  x_j \Bigr|^2 \Bigr].
$$
Then
$$
\Bigl( \prod_{i=1}^k    \int e^{F_i} d \gamma \Bigr)^{\lambda_i}   {p(x)} \le 1, 
$$
where $p \cdot \gamma$ is the barycenter of $\frac{e^{F_i}}{\int e^{F_i} d \gamma} \cdot \gamma$.
\end{corollary}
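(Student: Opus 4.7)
The plan is to recognize this corollary as the Gaussian reformulation of the inequality that immediately precedes it, and derive it by a direct substitution. Specifically, set
$$V_i(x_i) = \tfrac{1}{2}|x_i|^2 - F_i(x_i).$$
Then the hypothesis on the $F_i$ is algebraically equivalent to the assumption
$$\sum_{i=1}^k \lambda_i V_i(x_i) \ge \tfrac{1}{2}\Bigl|\sum_{i=1}^k \lambda_i x_i\Bigr|^2,$$
which is exactly the condition (\ref{0612ass}) with $g(x)=e^{-|x|^2/2}$ needed to invoke the theorem preceding the corollary.

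Next I would translate the conclusion of that theorem back to the Gaussian language. The Lebesgue integral satisfies
$$\int e^{-V_i(x_i)}\,dx_i = (2\pi)^{n/2}\int e^{F_i}\,d\gamma,$$
and the probability measures used in the barycenter agree on the nose:
$$\frac{e^{-V_i(x_i)}}{\int e^{-V_i}\,dx_i}\,dx_i \;=\; \frac{e^{F_i(x_i)}}{\int e^{F_i}\,d\gamma}\,d\gamma.$$
Consequently the barycenter measure with density $\rho$ relative to Lebesgue is the same probability measure as the one with density $p$ relative to $\gamma$, so $\rho(x) = (2\pi)^{-n/2} p(x)\, e^{-|x|^2/2}$.

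Plugging these identities into the conclusion
$$\prod_{i=1}^k\Bigl(\int e^{-V_i}\,dx_i\Bigr)^{\lambda_i}\rho(x)\;\le\;e^{-|x|^2/2}$$
and using $\sum_i \lambda_i = 1$ to collapse the factor $\prod_i ((2\pi)^{n/2})^{\lambda_i} = (2\pi)^{n/2}$, the Gaussian density $e^{-|x|^2/2}$ and the normalizing $(2\pi)^{n/2}$ cancel on both sides, leaving precisely
$$\prod_{i=1}^k\Bigl(\int e^{F_i}\,d\gamma\Bigr)^{\lambda_i}\, p(x)\;\le\;1.$$

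There is no real obstacle here: the only content is bookkeeping between Lebesgue and Gaussian reference measures, and verifying that the barycenter notion is intrinsic to the probability measure (independent of whether it is written with respect to Lebesgue or $\gamma$). The only point that would merit a sentence of care is that the probability measures in the barycenter problem coincide under the two descriptions, so that $\rho$ and $p$ really are related by $\rho = (2\pi)^{-n/2} p\, e^{-|x|^2/2}$ at the base point of the conclusion.
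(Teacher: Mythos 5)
Your proof is correct and takes exactly the paper's route: the paper applies the preceding pointwise theorem to $V_i(x_i)=\tfrac{1}{2}|x_i|^2-F_i(x_i)$ with $g(x)=e^{-|x|^2/2}$ and then states the corollary as the rewriting of that inequality with respect to the Gaussian reference measure, which is precisely your substitution and Lebesgue-to-$\gamma$ bookkeeping, including the observation that the barycenter is intrinsic to the probability measures so that $\rho(x)=(2\pi)^{-n/2}p(x)e^{-|x|^2/2}$.
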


\section{Talagrand-type estimates for the barycenter functional} \label{Talagrand}

In this section we show that a weak form of the Blaschke--Santal\'o inequality is related to the displacement convexity property
 of the Gaussian entropy.
The conjectured strong form of the Blaschke--Santal\'o inequality is equivalent to a certain strong entropy-$W_2$-bound, a particular case of this bound for two functions was proved by M.~Fathi in \cite{Fathi}. 
\vskip 2mm
\noindent
Let us briefly recall the main transportation Gaussian inequalities.
\begin{enumerate}
\item Every  probability measure $f \cdot \gamma$ (not necessary centered) satisfies the Talagrand transportation inequality
$$
\frac{1}{2} W^2( f \cdot \gamma, \gamma) \le {\rm Ent}_{\gamma} f := \int f \log f d \gamma.
$$
\item
In the case when one of the measures $f \cdot \gamma,  g \cdot \gamma$ is centered, a stronger inequality holds (see Remark \ref{fathi-comments} and the comments after it)
\begin{equation}
\label{fathi-2} 
\frac{1}{2} W^2_2(f\cdot \gamma, g \cdot \gamma)
 \le \int f \log f d \gamma + \int g \log g d \gamma.
\end{equation}
\item
Displacement convexity of the Gaussian entropy for arbitrary measures $\mu_i$, $1 \le i \le k$, which states that
\begin{equation}
\label{dispentgauss}
{\rm Ent}_{\gamma} (\mu) + \frac{1}{2} \sum_{i=1}^k \lambda_i  W^2_2(\mu, \mu_i ) \le \sum_{i=1}^k \lambda_i {\rm Ent}_{\gamma} (\mu_i), 
\end{equation}
where $\mu$ is the barycenter of the $\mu_i$ with weights $\lambda_i$.
\end{enumerate}
We have seen above that (\ref{dispentgauss}) follows from Theorem \ref{ppli} (pointwise Pr\'ekopa--Leindler inequality). We show below that the following  weaker version of   (\ref{dispentgauss})
\begin{equation}
\label{dispentgauss0}
 \frac{1}{2} \sum_{i=1}^k \lambda_i  W^2_2(\mu, \mu_i ) \le \sum_{i=1}^k \lambda_i {\rm Ent}_{\gamma} (\mu_i)
\end{equation}
is equivalent to some form of the Pr\'ekopa--Leindler inequality (see Remark \ref{110721}).
\vskip 2mm
\noindent
In this section we establish the equivalence (and verify it in the unconditional case in Theorem \ref{BarycenterTheo}) between the conjectured Blaschke--Santal\'o inequality and the inequality 
$$
\frac{1}{2k} \sum_{i=1}^k W^2_2(\mu_i,\mu) \le  \frac{k-1}{k^2} \sum_{i=1}^k \text{\rm Ent}_{\gamma} (\mu_i),
$$
for symmetric measures, which is stronger than (\ref{dispentgauss0}) for the choice of weights $\lambda_i = \frac{1}{k}$ and generalizes (\ref{fathi-2}) for $k>2$.
\vskip 2mm
\noindent
In what follows,  $\pi$ denotes the solution to the multimarginal Kantorovich problem with marginals $\mu_i$.
Note that
$$
\sum_{i=1}^k  \bigl|x_i - \frac{1}{k} {\sum_{ j=1}^k x_i } \bigr|^2 = \frac{1}{k} \sum_{i,j=1, i<j}^k |x_i - x_j|^2.
$$
Hence one gets by  by Theorem \ref{barycenter-th} 
$$
\mathcal{F}(\mu) = \frac{1}{2k^2} \int \sum_{i,j=1, i<j} |x_i - x_j|^2 d \pi = \frac{1}{2k} \sum_{i=1}^k W^2_2(\mu_i,\mu).
$$
\vskip 3mm
\noindent
\begin{theorem} \label{BarycenterTheo}
Assume that for $1 \leq i \leq k$, $\mu_i = \rho_i \cdot \gamma$  are probability measures and  the $\rho_i$ are unconditional and let $\mu$ be the barycenter of the $\mu_i$  with weights $\lambda_i=\frac{1}{k}$. 
Then
\begin{equation}\label{Wasser}
\mathcal{F}(\mu) \le  \frac{k-1}{k^2} \sum_{i=1}^k \int \rho_i \log \rho_i d \gamma =  \frac{k-1}{k^2} \sum_{i=1}^k \text{\rm Ent}_{\gamma} (\mu_i).
\end{equation} 
\end{theorem}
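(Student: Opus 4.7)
The strategy is to identify the missing content of the claim as a Kantorovich cross term, dualize, and feed the resulting dual potentials into the unconditional Blaschke--Santal\'o inequality (Corollary \ref{Cor-Specialrho}), coupled with a Jensen lower bound for Gaussian Laplace transforms. Using the identity $\sum_{i<j}|x_i-x_j|^2=(k-1)\sum_i|x_i|^2-2\sum_{i<j}\langle x_i,x_j\rangle$ together with the fact (recalled just above the theorem) that $\mathcal{F}(\mu)=\frac{1}{2k^2}\int\sum_{i<j}|x_i-x_j|^2\,d\pi$ and that $\pi$ has marginals $\mu_i$, I would first rewrite
\[
\mathcal{F}(\mu)\;=\;\frac{k-1}{2k^2}\sum_{i=1}^k\int|x|^2\,d\mu_i\;-\;\frac{1}{k^2}\int\sum_{i<j}\langle x_i,x_j\rangle\,d\pi,
\]
so that the target (\ref{Wasser}) is equivalent to
\[
\frac{1}{k-1}\int\sum_{i<j}\langle x_i,x_j\rangle\,d\pi\;\ge\;\frac12\sum_i\int|x|^2\,d\mu_i-\sum_i\text{Ent}_\gamma(\mu_i).
\]

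Next, invoke Theorem \ref{barycenter-th} with $\lambda_i=1/k$ to obtain convex dual potentials $v_i$ satisfying $\sum_i v_i(x_i)\ge \frac{2}{k^2}\sum_{i<j}\langle x_i,x_j\rangle$ pointwise, with equality $\pi$-almost everywhere. In particular $\sum_i\int v_i\,d\mu_i=\frac{2}{k^2}\int\sum_{i<j}\langle x_i,x_j\rangle\,d\pi$. To fit the assumptions of Corollary \ref{Cor-Specialrho} I would symmetrize via the diagonal $\{-1,1\}^n$-action: set $\widetilde v_i(x):=2^{-n}\sum_{\epsilon\in\{-1,1\}^n}v_i(\epsilon\cdot x)$. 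Since $\langle\epsilon\cdot x_i,\epsilon\cdot x_j\rangle=\langle x_i,x_j\rangle$ and each $\mu_i$ is unconditional, the $\widetilde v_i$ are unconditional, satisfy the same pointwise inequality, and attain the same dual value. Rescaling $U_i:=\frac{k^2}{2(k-1)}\widetilde v_i$ produces unconditional convex functions with
\[
\sum_i U_i(x_i)\;\ge\;\frac{1}{k-1}\sum_{i<j}\langle x_i,x_j\rangle\qquad\text{for all }x_i\in\mathbb{R}^n,
\]
hence also on $\mathbb{R}_+^n$.

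Corollary \ref{Cor-Specialrho} applied to $f_i=e^{-U_i}$ then gives $\sum_i\log\int e^{-U_i}\,dx\le\frac{nk}{2}\log(2\pi)$. To reintroduce $\mu_i$ and its entropy, I would rewrite $\int e^{-U_i}\,dx=(2\pi)^{n/2}\int \rho_i^{-1}e^{-U_i+|x|^2/2}\,d\mu_i$ and apply Jensen's inequality (concavity of $\log$) against the probability measure $\mu_i$ to obtain
\[
\log\int e^{-U_i}\,dx\;\ge\;\frac{n}{2}\log(2\pi)-\int U_i\,d\mu_i+\tfrac12\int|x|^2\,d\mu_i-\text{Ent}_\gamma(\mu_i).
\]
Summing and subtracting against the Blaschke--Santal\'o bound yields $\sum_i\int U_i\,d\mu_i\ge\tfrac12\sum_i\int|x|^2\,d\mu_i-\sum_i\text{Ent}_\gamma(\mu_i)$. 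Substituting the duality attainment in the rescaled form $\sum_i\int U_i\,d\mu_i=\frac{1}{k-1}\int\sum_{i<j}\langle x_i,x_j\rangle\,d\pi$ gives precisely the reduced inequality from the first paragraph, and hence (\ref{Wasser}).

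The principal subtlety is the unconditionality of the dual potentials, which is essential for Corollary \ref{Cor-Specialrho}: it is handled by the diagonal $\{-1,1\}^n$-symmetrization above, which works precisely because the quadratic cost $\langle x_i,x_j\rangle$ is invariant under the simultaneous action $x_i\mapsto\epsilon\cdot x_i$ on all marginals while $\mu_i$-averages are preserved coordinate-wise. Beyond that, every step is either the already proved Blaschke--Santal\'o inequality, the multimarginal duality recalled in Theorem \ref{barycenter-th}, or a routine Jensen estimate against the Gaussian.
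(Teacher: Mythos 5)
Your proposal follows essentially the same route as the paper's proof: both arguments rest on the multimarginal Kantorovich duality of Theorem \ref{barycenter-th}, the diagonal $\{-1,1\}^n$-symmetrization of the dual potentials (to make them unconditional), the unconditional Blaschke--Santal\'o inequality (Theorem \ref{BSunc} / Corollary \ref{Cor-Specialrho}), and a Gibbs-type entropy estimate against each $\mu_i$. Your Jensen step $\log\int e^{-U_i}\,dx \ge \tfrac{n}{2}\log(2\pi)-\int U_i\,d\mu_i+\tfrac12\int|x|^2\,d\mu_i-{\rm Ent}_{\gamma}(\mu_i)$ is just the dual form of the paper's use of $xy\le e^x+y\log y-y$, and your explicit splitting of $\sum_{i<j}|x_i-x_j|^2$ into second moments plus the cross term corresponds to the paper's subtraction of $\tfrac12|x_i|^2$ from its dual potentials $f_i$; the normalizations are consistent, so the algebra closes.

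The one point you pass over, and which the paper treats explicitly, is the integrability required to invoke Corollary \ref{Cor-Specialrho}: one must know that the functions $e^{-U_i}$ are integrable on $\mathbb{R}^n$ (convexity of $U_i$ plus the constraint only gives a lower bound on each $U_i$, not growth), and that $\int U_i\,d\mu_i$ is finite so that your additions and subtractions are legitimate. The paper secures this by first reducing, via lower semicontinuity of $\mathcal{F}$, to compactly supported densities $\rho_i$, and then using the relation between the dual potentials $v_i$ and the transport potentials $\Phi_i^*$ to show that $\nabla v_i$ is bounded on ${\rm supp}\,\mu_i$, whence the potentials are $\mu_i$-essentially bounded and the relevant exponentials are integrable. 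You should add this approximation/boundedness step (or some substitute guaranteeing $e^{-U_i}\in L^1(\mathbb{R}^n)$ and $U_i\in L^1(\mu_i)$); with it, your argument is complete and coincides with the paper's.
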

\vskip 2mm
\noindent
\begin{proof}
Using  standard approximation arguments  and lower semicontinuity of the functional $\mathcal{F}$ one can reduce the general case to the case of compactly supported densities $\rho_i$.
By the Kantorovich duality (see e.g., \cite{Villani}), 
\begin{eqnarray*} 
\mathcal{F}(\mu) &=&  \frac{1}{2k^2} \int \sum_{i,j=1, i<j} |x_i - x_j|^2 d \pi  = \frac{k-1}{k^2}\,  \frac{1}{2(k-1)} \int \sum_{i,j=1, i<j} |x_i - x_j|^2 d \pi \\
&=& \frac{k-1}{k^2} \int \sum_{i=1}^k f_i(x_i)  d \pi = \frac{k-1}{k^2} \sum_{i=1}^k \int f_i(x_i)  d \mu_i , 
\end{eqnarray*} 
for some  measurable functions $f_i$ satisfying 
\begin{equation} \label{Gleichung-f}
\sum_{i=1}^k f_i(x_i) \le \frac{1}{2(k-1)} \sum_{i,j=1, i<j} ^k |x_i - x_j|^2,
\end{equation}
 with equality $\pi$-a.e.
\par
\noindent
Note that we can assume that the functions $f_i$ are unconditional.
Indeed, if not, replace $f_i$  for all $i$ by 
$$
g_i(x_i) = \frac{1}{2^n} \sum _{\varepsilon} f_i( \varepsilon x_i),
$$
where $\varepsilon x_i = ( \varepsilon_1 x_i^1, \varepsilon_2 x_i^2, \cdots,  \varepsilon_n x_i^n) $ and $\varepsilon_l = \pm1$, $1 \leq l \leq n$.
Then the functions $g_i$ are unconditional.  They also  satisfy the dual problem  as the measures $\mu_i$ are unconditional and as the cost function
does not change under $x_i \to \varepsilon x_i$.
\par
\noindent
Inequality (\ref{Gleichung-f}) is equivalent to
$$
\sum_{i=1}^k \bigl( f_i(x_i) - \frac{1}{2} |x_i|^2 \bigr) \le - \frac{1}{k-1}  \sum_{i,j=1, i<j} \langle x_i , x_j \rangle.
$$
We will apply  Theorem \ref{BSunc} to the functions 
$f_i(x_i) - \frac{1}{2} |x_i|^2$. To this end we need to show that $e^{f_i(x_i) - \frac{1}{2} |x_i|^2}$ are integrable functions. Moreover, let us show that $f_i(x_i) - \frac{1}{2} |x_i|^2 
\in L^{\infty}(\mu_i)$ for every $i$.
\par
\noindent
Let  $R>0$ be a number such that ${\rm{supp}}(\mu_i) \subset B_R$. Then it follows from Theorem \ref{ppli} that ${\rm{supp}}(\mu) \subset B_R$. Hence the optimal transportation mapping
$\nabla \Phi^*_i$ of $\mu$ onto $\mu_i$ satisfies the estimate $|\nabla \Phi^*_i| \le R$.  By Theorem \ref{barycenter-th}
$\Phi^*_i(x_i) = \frac{1}{2k} |x_i|^2 + k v_i(x_i) + C_i $, where $v_i$ and $f_i$ are related as follows
$$
\frac{k^2}{k-1} v_i(x_i)= \frac{1}{2} |x_i|^2 - f_i(x_i).
$$
To show that $ \frac{1}{2} |x_i|^2 - f_i(x_i) \in  L^{\infty}(\mu_i)$ it is sufficient to show that $\nabla v_i$ is bounded on the support of $\mu_i$. Indeed,
$|\nabla v_i| = \frac{1}{k} |\nabla \Phi^*_i(x_i) - \frac{1}{k} x_i | \le \frac{k+1}{k^2} R$.
\par
\noindent
It now follows from Theorem \ref{BSunc} 
that
$$
\prod_{i=1}^k \int e^{f_i(x_i) - \frac{1}{2} |x_i|^2} dx_i \le (2 \pi)^{k\frac{n}{2}}, 
$$
or, equivalently,
\begin{equation}
\label{exp1}
\prod_{i=1}^k \int e^{f_i(x_i)} d \gamma \le 1. 
\end{equation}
The claim follows from the estimates 
\begin{align*}
\int \sum_{i=1}^k f_i(x_i)  d \mu_i & \le \int \sum_{i=1}^k (f_i - \log \int e^{f_i} d \gamma ) \rho_i d \gamma
\le \sum_{i=1}^k \int \bigl( \rho_i \log \rho_i -  \rho_i + e^{\int (f_i - \log \int e^{f_i} d \gamma )} \bigr) d \gamma
\\&  = \sum_{i=1}^k \int \rho_i \log \rho_i d \gamma.
\end{align*}
Here the first inequality follows from (\ref{exp1}) and in the second inequality we apply the  inequality
$xy \le e^x + y \log y - y$, which is valid for $x \in \mathbb{R}, y \ge 0$.
\end{proof}
\vskip 1mm
\noindent
\begin{remark}
\label{fathi-comments}
This result is a generalization in the unconditional setting of a result of M.~Fathi  \cite{Fathi} for two functions:
\newline
Let $\rho_0, \rho_1$ be two Gaussian unconditional probability densities and $\rho_{1/2}$ be the corresponding barycenter.
Then inequality  \ref{Wasser} implies
\begin{align} \label{Fathi}
\frac{1}{2} W^2_2(\rho_0 \cdot \gamma, \rho_1 \cdot \gamma)
& = 2 W^2_2(\rho_0 \cdot \gamma, \rho_{1/2} \cdot \gamma )
= W^2_2(\rho_0 \cdot \gamma, \rho_{1/2} \cdot \gamma )+ W^2_2(\rho_1 \cdot \gamma, \rho_{1/2} \cdot \gamma ) \nonumber
\\& \le \int \rho_0 \log \rho_0 d \gamma + \int \rho_1 \log \rho_1 d \gamma.
\end{align}
This is a particular case of Fathi's inequality. 
\end{remark}
\vskip 2mm
\noindent
Fathi has shown that in the class of symmetric functions  inequality (\ref{Fathi}) is equivalent to a Blaschke--Santal\'o inequality involving two
exponential functions.  Already earlier,  in \cite{ArtKlarMil},  it was noted that the Blaschke--Santal\'o inequality can be re-written in terms of a 
property $\tau$ introduced by Maurey \cite{Maurey} which is dual to 
to the transportation inequality. 
We follow the approach  in \cite{Fathi} to show that  
the inequality of Theorem  \ref{BarycenterTheo}  is  also equivalent to a functional Blaschke--Santal\'o for multiple exponential functions.
\par
\noindent
Indeed, letting $\rho(t) = e^{-\frac{t}{k-1}}$ in Theorem \ref{BSunc}, we get the following multifunctional Blaschke--Santal\'o inequality:
\newline
Let $f_i \colon \mathbb{R}^n \to \mathbb{R}_{+}$, $1 \le i \le k$, be  measurable  unconditional functions with $\int e^{f_i}$ integrable  such that 
\begin{equation}\label{Bedingung-BS}
\sum_{i=1}^k f_i(x_i)\le -\frac{1}{k-1}  \sum_{i, j=1, i < j}^{k} \langle x_i, x_j \rangle.
\end{equation}
 Then
\begin{equation} \label{expBSinequality}
 \prod_{i=1}^k \int_{\mathbb{R}^n} e^{f_i}  dx
\le \left( 2 \pi \right) ^{k \frac{n}{2}}.
\end{equation}
\vskip 2mm
\noindent
\begin{proposition}
\label{BarycenterTheo2}
Inequality (\ref{Wasser})  is  equivalent to the functional Blaschke--Santal\'o inequality (\ref{expBSinequality}).
\end{proposition}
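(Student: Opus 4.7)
The plan is to establish both implications via multimarginal Kantorovich duality for the cost
\[
c(x_1,\dots,x_k) = \frac{1}{2(k-1)} \sum_{i,j=1,\, i<j}^k |x_i-x_j|^2,
\]
mirroring and reversing the derivation in the proof of Theorem \ref{BarycenterTheo}. A first normalization step rewrites (\ref{expBSinequality}) in Gaussian form: via the substitution $F_i(x_i) = f_i(x_i) + \tfrac{1}{2}|x_i|^2$ and the identity $\sum_{i<j}|x_i-x_j|^2 = k\sum_i |x_i - \bar x|^2$ (with $\bar x = \tfrac{1}{k}\sum_i x_i$), the hypothesis (\ref{Bedingung-BS}) becomes $\sum_i F_i(x_i) \le c(x_1,\dots,x_k)$ and the conclusion (\ref{expBSinequality}) becomes $\prod_i \int e^{F_i}\, d\gamma \le 1$. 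I would next record, for unconditional marginals $\mu_i$, the multimarginal Kantorovich duality formula
\[
\mathcal{F}(\mu) = \frac{k-1}{k^2}\, \sup\Bigl\{ \textstyle\sum_{i=1}^k \int g_i\, d\mu_i : (g_i) \text{ unconditional},\ \sum_i g_i(x_i) \le c(x_1,\dots,x_k) \Bigr\},
\]
which is precisely what underlies the derivation in Theorem \ref{BarycenterTheo}.

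For the direction (\ref{Wasser}) $\Rightarrow$ (\ref{expBSinequality}), I would take an arbitrary unconditional tuple $(F_i)$ with finite $Z_i := \int e^{F_i}\, d\gamma$ satisfying $\sum_i F_i \le c$, define $\rho_i := Z_i^{-1} e^{F_i}$, $\mu_i := \rho_i\, \gamma$, and let $\mu$ be their barycenter with weights $1/k$. Since $(F_i)$ is an admissible competitor in the dual problem above, weak duality yields $\frac{k-1}{k^2}\sum_i \int F_i\, d\mu_i \le \mathcal{F}(\mu)$. Using the identity $\mathrm{Ent}_\gamma(\mu_i) = \int F_i\, d\mu_i - \log Z_i$, hypothesis (\ref{Wasser}) reads $\mathcal{F}(\mu) \le \frac{k-1}{k^2}\bigl(\sum_i \int F_i\, d\mu_i - \sum_i \log Z_i\bigr)$. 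Subtracting the two bounds produces $\sum_i \log Z_i \le 0$, i.e.\ $\prod_i Z_i \le 1$, which is (\ref{expBSinequality}) in its Gaussian form.

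For the reverse implication (\ref{expBSinequality}) $\Rightarrow$ (\ref{Wasser}), the plan is to run the latter half of the proof of Theorem \ref{BarycenterTheo} verbatim, but to invoke the hypothesis (\ref{expBSinequality}) in place of Theorem \ref{BSunc}. Concretely, for given unconditional $\rho_i$ and barycenter $\mu$, I would choose unconditional dual potentials $(f_i)$ with $\sum_i f_i \le c$ achieving the supremum, so that $\mathcal{F}(\mu) = \frac{k-1}{k^2}\sum_i \int f_i\, d\mu_i$; applying (\ref{expBSinequality}) to these $f_i$ yields $\sum_i \log Z_i \le 0$ with $Z_i = \int e^{f_i}\, d\gamma$; and Young's inequality $xy \le e^x + y\log y - y$ taken with $x = f_i - \log Z_i$, $y = \rho_i$, and summed in $i$, produces (\ref{Wasser}) exactly as in the last display of the proof of Theorem \ref{BarycenterTheo}.

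The main obstacle I anticipate is purely technical: the Kantorovich potentials are a priori only measurable, integrability of $e^{f_i}$ against $\gamma$ has to be justified, and one must check that the dual supremum can be attained by an unconditional tuple. All three issues are addressed by the same approximation scheme used in the proof of Theorem \ref{BarycenterTheo}, namely reducing to compactly supported $\rho_i$ via lower semicontinuity of $\mathcal{F}$, exploiting Theorem \ref{barycenter-th} to bound $\nabla v_i$ on the support of $\mu_i$, and symmetrizing a candidate tuple via $g_i(x_i) \mapsto 2^{-n}\sum_{\varepsilon \in \{\pm 1\}^n} g_i(\varepsilon x_i)$, which preserves dual admissibility and, because the $\mu_i$ are unconditional, does not decrease the dual functional.
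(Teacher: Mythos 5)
Your proposal is correct and follows essentially the same route as the paper: the direction (\ref{expBSinequality}) $\Rightarrow$ (\ref{Wasser}) is exactly the paper's ``one implication is just Theorem \ref{BarycenterTheo}'' (rerunning that proof with the hypothesis replacing Theorem \ref{BSunc}), and the direction (\ref{Wasser}) $\Rightarrow$ (\ref{expBSinequality}) is the paper's Kantorovich (weak) duality plus the variational entropy identity with cancellation of the $\sum_i \int F_i\, d\mu_i$ terms, merely written in Gaussian normalization rather than the paper's ``Euclidean'' form (\ref{Wasser1})--(\ref{Kantor}). The technical points you flag (unconditional symmetrization of dual potentials, their boundedness via Theorem \ref{barycenter-th}, approximation by compactly supported densities) are handled exactly as in the paper.
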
 
\begin{proof}
One implication is just Theorem \ref{BarycenterTheo}. 
\par
\noindent
For the other implication, we  first  rewrite  inequality  (\ref{Wasser}). Thus, 
let $\mu$ be the barycenter of the $\mu_i= \rho_i \cdot \gamma$ with coefficients  $\frac{1}{k}$ and unconditional $\rho_i$. 
We recall  that for a probability measure $\nu$  
$$
\text{Ent}_\gamma (\nu) = \text{Ent}_{dx} (\nu) +\frac{n}{2} \log( 2\pi) + \frac{1}{2} \int|x|^2 d \nu
$$
and use this and the definition of the Kantorovich distance to get that (\ref{Wasser}) is equivalent to
\begin{equation}  \label{Wasser1} 
- \frac{2}{k} \inf _{P}  \sum_{i, j=1, i < j}^{k} \int  \langle x_i, x_j \rangle  \, dP \leq \frac{2(k-1)}{k}  \sum_{i=1}^k \text{Ent} _{dx} (\mu_i)  + (k-1) \log (2 \pi)^n.
\end{equation}
Let now the $f_i$ be unconditional and such that they satisfy (\ref{Bedingung-BS}).
We apply  (\ref{Wasser1}) to  $\mu_i = \rho_i  \gamma = \frac{e^{f_i}}{\int e^{f_i }} \gamma$. We also use that
for a probability measure $\nu$ 
\begin{equation}\label{entropy-id}
\text{Ent}_{dx} (\nu) = \sup_f \int f d \nu - \log \int e^f dx
\end{equation}
 and get 
\begin{eqnarray} \label{Wasser2}
&&\hskip -5mm - \frac{2}{k} \inf _{P}  \sum_{i, j=1, i < j}^{k} \int  \langle x_i, x_j \rangle  \, dP  \nonumber \\
&& \hskip 5mm  \leq \frac{2(k-1)}{k}  \sum_{i=1}^k \left(\int f_i d \mu_i - \log \int e^{f_i}  dx\right)   + (k-1) \log (2 \pi)^n . 
\end{eqnarray}
By the Kantorovich duality, the left hand side of this inequality equals 
\begin{eqnarray} \label{Kantor}
&&\hskip -10mm  - \frac{2}{k} \inf _{P}  \sum_{i, j=1, i < j}^{k} \int  \langle x_i, x_j \rangle  \, dP \nonumber \\
&& = \frac{2(k-1) }{k} \sup_{\sum_{i=1}^k h_i(x_i)\le -\frac{1}{k-1}  \sum_{i, j=1, i < j}^{k} \langle x_i, x_j \rangle}    \sum_{i =1}^{k} \int h_i d \mu_i \\
&&\geq  \frac{2(k-1) }{k}  \sum_{i =1}^{k} \int f_i d \mu_i \nonumber.
\end{eqnarray} 
Putting this into  (\ref{Wasser2}) and removing terms that appear on both sides gives the inequality (\ref{expBSinequality}). 
\end{proof} 
\vskip 3mm 
\noindent
\begin{remark}
\label{110721}
Mimicking the proofs of Theorem \ref{BarycenterTheo} and  Proposition \ref{BarycenterTheo2}
one can show that the  inequality 
\begin{equation}
\label{fili}
\Bigl( \prod_{i=1}^k    \int e^{F_i} d \gamma \Bigr)^{\lambda_i}    \le 1, 
\end{equation}
where  the functions $F_i$  satisfy
\begin{equation} \label{Annahme}
\sum_{i=1}^k \lambda_i F_i(x_i) \le  \frac{1}{2} \Bigl[ \sum_{i=1}^k  \lambda_i |x_i|^2  -  \Bigl|\sum_{j=1}^k \lambda_j  x_j \Bigr|^2 \Bigr],
\end{equation}
 is equivalent to the inequality
\begin{equation}
\label{lientwi}
\sum_{i=1}^{k} \lambda_i {\rm Ent}_{\gamma}(\mu_i) \ge \frac{1}{2} \sum_{i=1}^n \lambda_i W^2_2(\mu_i, \mu),
\end{equation}
where $\mu$ is the barycenter of the $\mu_i's$ with weights $\lambda_i$.
\vskip 2mm 
\noindent
Letting $V_i=  - F_i  + \frac{ |x_i|^2}{2}$ and $\lambda_i = \frac{1}{k}$, $1 \leq i \leq k$, we note  that inequality (\ref{fili}) has the following equivalent ``Euclidean'' formulation:
$$
\prod_{i=1}^k  \int e^{-V_i} dx_i  \le (2 \pi)^{k \frac{n}{2}}, 
$$
provided
$
\sum_{i=1}^k V_i(x_i) \ge \frac{1}{2k} \bigl | x_1 + \cdots + x_k \bigr |^2$. 
This inequality is a direct consequence of the Pr\'ekopa--Leindler inequality and here we do not assume that  the $V_i$ are even. 
\end{remark}
\vskip 2mm
\noindent
\begin{remark}
See also the notes to the first version of the article which contained another proof of (\ref{fili})
based on a symmetrization procedure.
\end{remark}
\vskip 2mm
\noindent 
Inequality (\ref{lientwi}) is, in fact, a weaker version of the displacement convexity property  
(\ref{entw2}). It follows, for instance, from inequality (\ref{2111}).
\vskip 3mm
\noindent
What happens,  if in the derivation of the Talagrand type bounds  instead of (\ref{fili}) one applies the stronger  pointwise inequality
$\Bigl( \prod_{i=1}^k    \int e^{F_i} d \gamma \Bigr)^{\lambda_i} p(x)    \le 1$, (see Corollary \ref{gauss-wbs})? 
The answer is given in the next theorem.
\par
\noindent
\begin{theorem}
Let $\mu_i= \rho_i \cdot \gamma$ be probability measures and $f_i(x_i)$ be the solution to the dual multimarginal problem
with marginals $\mu_i$ and the cost function $\frac{1}{2k} \sum_{i,j=1, i<j}^k |x_i - x_j|^2$.
Let $\mu = p(x) \cdot \gamma$ be the barycenter of probability measures $\frac{e^{f_i}}{\int e^{f_i} d \gamma} \cdot \gamma$ with weights $\frac{1}{k}$. 
Then 
\begin{equation}
\label{prhow}
p(x) \le e^{\frac{1}{k} \sum_{i=1}^k (\int \rho_i \log \rho_i d \gamma - \frac{1}{2} W^2_2(\mu, \mu_i))}.
\end{equation}
\end{theorem}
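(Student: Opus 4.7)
The plan is to combine the pointwise bound from Corollary~\ref{gauss-wbs} with the Gibbs variational formula for entropy and the strong duality of the multimarginal Kantorovich problem (Theorem~\ref{barycenter-th}). First, I would verify that Corollary~\ref{gauss-wbs} applies with $\lambda_i = \frac{1}{k}$ and $F_i = f_i$. A direct expansion shows that
$$\frac{1}{2}\Bigl[\sum_{i=1}^k \tfrac{1}{k}|x_i|^2 - \Bigl|\sum_{j=1}^k \tfrac{1}{k}x_j\Bigr|^2\Bigr] = \frac{1}{2k^2}\sum_{i<j}|x_i - x_j|^2,$$
so the hypothesis of Corollary~\ref{gauss-wbs} is equivalent to $\sum_i f_i(x_i) \le \frac{1}{2k}\sum_{i<j}|x_i - x_j|^2$, which is precisely the Kantorovich dual feasibility constraint satisfied by the dual potentials $f_i$. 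The corollary then delivers
$$p(x) \le \prod_{i=1}^k \Bigl(\int e^{f_i}\, d\gamma\Bigr)^{-1/k}.$$

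Next, I would apply the Gibbs variational formula \eqref{entropy-id} with test function $g = f_i$ against the measure $\mu_i = \rho_i \cdot \gamma$:
$$\int \rho_i \log \rho_i\, d\gamma = \text{Ent}_\gamma(\mu_i) \ge \int f_i \, d\mu_i - \log \int e^{f_i}\, d\gamma,$$
or equivalently $-\log \int e^{f_i}\, d\gamma \le \int \rho_i \log \rho_i \, d\gamma - \int f_i \, d\mu_i$. Summing over $i$, dividing by $k$, and combining with the first step converts the bound into
$$p(x) \le \exp\Bigl(\frac{1}{k}\sum_{i=1}^k \int \rho_i \log \rho_i\, d\gamma - \frac{1}{k}\sum_{i=1}^k \int f_i \, d\mu_i\Bigr).$$

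Finally, I would invoke Kantorovich strong duality together with Theorem~\ref{barycenter-th}(2): since the $f_i$'s solve the dual multimarginal problem with marginals $\mu_i$ and cost $c = \frac{1}{2k}\sum_{i<j}|x_i - x_j|^2$, one has $\sum_i \int f_i\, d\mu_i = \int c\, d\pi$, and Theorem~\ref{barycenter-th}(2), applied with $\lambda_i = \frac{1}{k}$ and $T(x) = \bar{x}$ so that $\sum_i|x_i-\bar x|^2 = \frac{1}{k}\sum_{i<j}|x_i-x_j|^2$, identifies this common value as $\frac{1}{2}\sum_i W^2_2(\mu_i, \mu)$. Substituting yields the claimed inequality. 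The main obstacle I foresee is precisely this last identification: Corollary~\ref{gauss-wbs} delivers a pointwise bound for the density of the barycenter of the measures $\frac{e^{f_i}}{\int e^{f_i}\, d\gamma}\cdot\gamma$, whereas Theorem~\ref{barycenter-th}(2) most naturally expresses $\sum_i\int f_i\, d\mu_i$ via Wasserstein distances from the barycenter of the original marginals $\mu_i$; reconciling these two uses of ``barycenter''---or verifying that they coincide via the structure of the optimal $f_i$---is the technical heart of the argument and also the step that forces the resulting estimate to reduce, in the simplest symmetric case, to the Talagrand transportation inequality.
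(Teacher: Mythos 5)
Your proposal is correct and is essentially the paper's own argument: the same three ingredients appear in the same roles, namely Corollary \ref{gauss-wbs} applied with $\lambda_i=\tfrac1k$ and $F_i=f_i$ (your verification of the hypothesis is exactly the needed computation), the Young/Gibbs inequality $\int\bigl(f_i-\log\int e^{f_i}\,d\gamma\bigr)\rho_i\,d\gamma\le\int\rho_i\log\rho_i\,d\gamma$, and strong Kantorovich duality combined with Theorem \ref{barycenter-th}(2) to rewrite $\sum_i\int f_i\,d\mu_i$ as $\tfrac12\sum_i W_2^2(\mu_i,\cdot)$. The obstacle you flag at the end is genuine but is not resolved in the paper either: its duality identity yields the Wasserstein distances to the barycenter of the $\mu_i$ (not of the measures $\tfrac{e^{f_i}}{\int e^{f_i}\,d\gamma}\cdot\gamma$), so the $W_2^2(\mu,\mu_i)$ in the statement has to be read with that barycenter --- the two barycenters do not coincide in general, and the paper's proof simply uses the former without comment.
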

\begin{proof}
Let $\pi$ be the solution to the corresponding primary problem. By the  Kantorovich duality
$$
\frac{1}{2k} \sum_{i=1}^k W^2_2(\mu_i,\mu)  = \frac{1}{2k^2} \int \sum_{i,j=1, i<j} |x_i - x_j|^2 d \pi = \frac{1}{k} \sum_{i=1}^k \int f_i d \mu_i.
$$
Then
$$
\frac{1}{2k} \sum_{i=1}^k W^2_2(\mu_i,\mu) = \frac{1}{k} \sum_{i=1}^k \int f_i d \mu_i
= \log \Bigl( \prod_{i=1}^k    \int e^{f_i} d \gamma \Bigr)^{\frac{1}{k}} 
+ \frac{1}{k} \sum_{i=1}^k \int \bigl( f_i  -  \log \int e^{f_i} d \gamma )  \rho_i d \gamma.
$$
Using Corollary \ref{gauss-wbs}
one has
$$
\log \Bigl( \prod_{i=1}^k    \int e^{f_i} d \gamma \Bigr)^{\frac{1}{k}} \le - \log p(x).
$$
Then we apply Young inequality  and get that 
$
\int \bigl( f_i  -  \log \int e^{f_i} d \gamma )  \rho_i d \gamma \le \int \rho_i \log \rho_i d \gamma.
$
Finally one obtains
$
\log p(x) \le {\frac{1}{k} \sum_{i=1}^k (\int \rho_i \log \rho_i d \gamma - \frac{1}{2} W^2_2(\mu, \mu_i))}.
$
\end{proof}
\vskip 2mm
\noindent
Taking logarithm of both sides of (\ref{prhow}) and integrating with respect to $\nu = p \cdot \gamma$
we obtain, in particular, the following estimate
$${\rm Ent}_{\gamma}(\nu) + \frac{1}{2k} \sum_{i=1}^k W^2_2(\mu, \mu_i) \le \frac{1}{k} \sum_{i=1}^k  {\rm Ent}_{\gamma}(\mu_i),$$
which is reminiscent to (\ref{dispentgauss}), but it is not 
completely clear how they can be compared.

\section{Monotonicity of the Blaschke-Santal\'o functional} \label{monotone}

In this section we prove a remarkable monotonicity property of the Blaschke--Santal\'o functional  which appears naturally with respect 
to the barycenter problem. 

\subsection{The case of two functions}

We start with the case of two functions, $k=2$. We first recall for $\lambda \in \mathbb{R}$, the  definition of the $\lambda$-affine surface area 
of a convex  function $V$ introduced in \cite{CFGLSW} and already given in (\ref{asa}).
\begin{equation*}
as_\lambda(V) =  \int_{\Omega_V}e^{(2\lambda-1)V(x)-\lambda \langle x, \nabla V (x)\rangle} \left(\det \, D^2V (x)\right)^\lambda dx, 
\end{equation*}
where $ D^2V$ is the Hessian of $V$.
\vskip 3mm
\noindent
We consider now two functionals on convex functions $V$,  the  Blaschke--Santal\'o functional
$$
\mathcal{BS} (V) = \int e^{-V} dx \int e^{-V^*} dx 
$$
and the $\frac{1}{2}$-affine surface area functional,  
$$
\mathcal{J}(V) = as_{\frac{1}{2}} (V) = \int e^{- \frac{1}{2} \langle x ,\nabla V(x) \rangle } \sqrt{\det D^2 V} dx.
$$
\noindent
To avoid technicalities,  we assume that $V$ is $C^2$ and strictly convex. 
\vskip 3mm
\noindent
\begin{proposition} \label{BS+J}
Let $V$ be a strictly convex $C^2$-function such that $e^{-V}, e^{-V^*}$ are integrable functions. Let $\nabla \Psi$ be the optimal transportation of $\frac{e^{-V} dx}{\int e^{-V} dx}$
onto  $\frac{e^{-V^*} dx}{\int e^{-V^*} dx}$.
Then
\begin{equation}
\label{BSV}
\mathcal{BS}(V) \le \mathcal{J}^2(\Psi) \le 
\mathcal{BS}(\Psi).
\end{equation}
Equivalently
$$
 \int e^{-V} dx \int e^{-V^*} dx  \le \Bigl( \int  e^{-\frac{1}{2}\langle x, \nabla \Psi \rangle } {\sqrt{\det D^2 \Psi}} dx \Bigr)^2
 \le \int e^{-\Psi} dx \int e^{-\Psi^*} dx .
$$
\end{proposition}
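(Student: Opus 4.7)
The proposition splits into two inequalities, which I plan to handle separately. The second one, $\mathcal{J}^2(\Psi)\le\mathcal{BS}(\Psi)$, is essentially immediate: apply Theorem \ref{Th:asa}(iii) to the convex potential $\Psi$ and square,
$$
\mathcal{J}(\Psi)=as_{\frac{1}{2}}(\Psi)\le\Bigl(\int e^{-\Psi}dx\Bigr)^{\frac{1}{2}}\Bigl(\int e^{-\Psi^*}dx\Bigr)^{\frac{1}{2}}=\mathcal{BS}(\Psi)^{\frac{1}{2}}.
$$
So the real content is the first inequality, $\mathcal{BS}(V)\le\mathcal{J}^2(\Psi)$.

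The key is to use that $\nabla\Psi$ transports $\frac{e^{-V}}{Z_V}dx$ to $\frac{e^{-V^*}}{Z_{V^*}}dx$ (with $Z_V=\int e^{-V}dx$, $Z_{V^*}=\int e^{-V^*}dx$). The Monge--Amp\`ere change of variables then reads
$$
\frac{e^{-V(x)}}{Z_V}=\frac{e^{-V^*(\nabla\Psi(x))}}{Z_{V^*}}\det D^2\Psi(x),
$$
so $\det D^2\Psi(x)=\frac{Z_{V^*}}{Z_V}\,e^{V^*(\nabla\Psi(x))-V(x)}$. Substituting this into the definition of $\mathcal{J}(\Psi)$ gives
$$
\mathcal{J}(\Psi)=\sqrt{\frac{Z_{V^*}}{Z_V}}\int e^{\frac{1}{2}\bigl(V^*(\nabla\Psi(x))-\langle x,\nabla\Psi(x)\rangle-V(x)\bigr)}\,dx.
$$
Now I would apply Young's (Fenchel's) inequality $V(x)+V^*(y)\ge\langle x,y\rangle$ with $y=\nabla\Psi(x)$, which gives
$$
V^*(\nabla\Psi(x))-\langle x,\nabla\Psi(x)\rangle-V(x)\ge -2V(x),
$$
so that the integrand is bounded below by $e^{-V(x)}$. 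This yields $\mathcal{J}(\Psi)\ge\sqrt{Z_{V^*}/Z_V}\cdot Z_V=\sqrt{Z_V Z_{V^*}}=\mathcal{BS}(V)^{\frac{1}{2}}$, and squaring finishes the first inequality.

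The main obstacle is really a regularity one rather than a conceptual one: one needs $\nabla\Psi$ to be a genuine gradient of a convex function, the Monge--Amp\`ere identity to hold pointwise (or almost everywhere), and the integrals to be well defined. Under the stated hypotheses that $V$ is $C^2$ and strictly convex with $e^{-V}$, $e^{-V^*}$ integrable, Brenier's theorem and Caffarelli-type regularity give a convex Brenier potential $\Psi$, and the classical change of variables applies. In general, one would replace $D^2\Psi$ by its absolutely continuous part $D^2_a\Psi$ and work with McCann's version of the change of variables; both modifications leave the inequality intact since $V^*\circ\nabla\Psi+V\ge\langle\cdot,\nabla\Psi\rangle$ is a pointwise statement independent of regularity. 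Equality analysis is also transparent from this proof: equality in the first inequality forces $V(x)+V^*(\nabla\Psi(x))=\langle x,\nabla\Psi(x)\rangle$ a.e., i.e.\ $\nabla\Psi=\nabla V$, which collapses $\Psi$ to $V$ (up to additive constant) and reveals how the functional $\mathcal{J}^2$ sits between $\mathcal{BS}(V)$ and $\mathcal{BS}(\Psi)$.
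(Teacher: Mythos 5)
Your proposal is correct and follows essentially the same route as the paper: the second inequality is Theorem \ref{Th:asa}(iii) applied to $\Psi$, and the first follows from the Monge--Amp\`ere change of variables $\frac{e^{-V}}{\int e^{-V}dx}=\frac{e^{-V^*(\nabla\Psi)}}{\int e^{-V^*}dx}\det D^2\Psi$ substituted into $\mathcal{J}(\Psi)$, then the Fenchel inequality $V(x)+V^*(\nabla\Psi(x))\ge\langle x,\nabla\Psi(x)\rangle$. Your equality observation ($\nabla\Psi=\nabla V$, so $\Psi=V+{\rm const}$) also matches the discussion the paper gives right after the proposition.
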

\begin{proof}
The second inequality is just Theorem \ref{Th:asa} (iii).  To prove the first inequality, we 
apply the change of variables formula
$$
\frac{e^{-V}}{\int e^{-V} dx} = 
\frac{e^{-V^*(\nabla \Psi)}}{\int e^{-V^*} dx} \det D^2 \Psi.
$$
Note that regularity of $V, V^*$ imply that $\Psi$ is sufficiently regular, hence $D^2 \Psi$ is absolutely continuous 
(see, for instance,  \cite{Villani}).
Then
$$
 \int  e^{-\frac{1}{2}\langle x, \nabla \Psi \rangle } {\sqrt{\det D^2 \Psi}} dx
 = \sqrt{\frac{\int e^{-V^*} dx}{ \int e^{-V} dx}} \int e^{\frac{V^*(\nabla \Psi) - V(x) - \langle x, \nabla \Psi \rangle}{2}} dx.
$$
The result follows from the inequality $V^*(\nabla \Psi) + V(x) \geq \langle x, \nabla \Psi \rangle$.
\end{proof}
\vskip 3mm
\noindent
Let us outline (without rigorous justifications) the idea of alternative proof of the Blaschke--Santal\'o inequality.
It can be easily seen from the proof that equality in $V^*(\nabla \Psi) + V(x) \geq \langle x, \nabla \Psi \rangle$ (and hence in (\ref{BSV}) ) is attained if and only if
$V = \Psi + a$ for some constant $a$.  Thus, within a certain appropriate class of functions,  e.g.,  symmetric, 
the maximum of the  Blaschke--Santal\'o
functional  must satisfy that  the 
measure $\frac{e^{-\Psi^*}}{\int e^{-\Psi^*} dx}$ is the push-forward measure of 
$\frac{e^{-\Psi}}{\int e^{-\Psi} dx}$ under the mapping $\nabla \Psi$.
This means that $\Psi$ solves the following Monge--Amp\`ere equation 
\begin{equation}
\label{psipsi*}
\frac{e^{-\Psi}}{\int e^{-\Psi} dx} =\frac{e^{-\Psi^*(\nabla \Psi)}}{\int e^{-\Psi^*} dx}  \det D^2 \Psi.
\end{equation} 
It was shown in \cite{CKW} that this equation admits the following family of solutions, provided $\frac{e^{-\Psi}}{\int e^{-\Psi} dx}$
has logarithmic derivatives,
 $$\Psi = \frac{\langle Ax, x \rangle}{2}+ c, 
 $$ 
 where $A$ is a positive definite matrix and $c$ is a constant.
These are exactly the maximizers of the Blaschke--Santal\'o functional. 
\vskip 2mm
\noindent
 Thus,  this observation suggests the following (so far heuristic) approach  
to the Blaschke--Santal\'o inequality.
 Let  $\Psi_0 = V$, and consider iterations  $\Psi_{l}$, $l \in \mathbb{N}$, where  $\Psi_{l+1}$ is the optimal transportation potential
 pushing forward $\frac{e^{-\Psi_l} dx}{\int e^{-\Psi_l} dx}$
onto  $\frac{e^{-\Psi_l^*} dx}{\int e^{-\Psi_l^*} dx}$. By Proposition \ref{BS+J}, one gets an increasing sequence
$\mathcal{BS}(\Psi_l)$, $l \in \mathbb{N}$. From this,  one can try to extract convergence of $\Psi_l$ to a potential $\Psi$, which gives a maximum to
the Blaschke--Santal\'o functional.
Then prove that $\Psi$ solves (\ref{psipsi*}),  and by uniqueness deduce that $\Psi$ is quadratic.

 \subsection{The multimarginal case}

Next we generalize the previous result to the multimarginal case, $k>2$.

\begin{theorem} \label{monotoneBS}
Assume that $V_i(x_i)$, $1 \leq i \leq k$,  are measurable functions such that $e^{-V_i}$ are integrable, satisfying
$$
\sum_{i =1}^k \lambda_i V_i(x_i) \ge C \sum_{i,j=1, i < j }^k \lambda_i \lambda _j \langle x_i , x_j \rangle 
$$
for some $C>0$ and $\lambda_i \in (0,1)$ with $\sum_{i=1}^k \lambda_i =1$.
\par
\noindent
Let the tuple of functions $\lambda_i U_i(x_i)$ be the solution to the dual multimarginal maximization problem
with marginals $\frac{e^{-V_i} dx_i}{\int e^{-V_i} dx_i}$
and the cost function
$C \sum_{i,j=1, i <j }^k \lambda_i \lambda_j \langle x_i, x_j \rangle.$
Then
$$
\prod_{i=1}^k \Bigl( \int e^{-V_i} dx_i \Bigr)^{\lambda_i} \le 
\prod_{i=1}^k \Bigl( \int e^{-U_i} dx_i \Bigr)^{\lambda_i}.
$$
\end{theorem}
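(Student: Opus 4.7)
The natural approach is a two-step argument: first use Kantorovich duality together with the feasibility of the tuple $(V_i)$ for the dual problem, and second use the Gibbs variational principle (entropy--partition function duality) to convert the integrals $\int V_i\,d\mu_i$ and $\int U_i\,d\mu_i$ into statements about $\log\int e^{-V_i}\,dx_i$ and $\log\int e^{-U_i}\,dx_i$.

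First I would observe that the hypothesis
$$\sum_{i=1}^k \lambda_i V_i(x_i) \ge C \sum_{i<j} \lambda_i \lambda_j \langle x_i, x_j\rangle$$
says precisely that the tuple $(\lambda_1 V_1, \ldots, \lambda_k V_k)$ is feasible for the dual multimarginal problem (minimization of $\sum_i \int \tilde U_i \, d\mu_i$ over tuples with $\sum_i \tilde U_i \ge $ cost). Since $(\lambda_1 U_1, \ldots, \lambda_k U_k)$ is by assumption an optimizer of that dual problem with the same marginals $\mu_i = \frac{e^{-V_i}}{Z_i}\,dx_i$, where $Z_i := \int e^{-V_i}\,dx_i$, optimality yields
$$\sum_{i=1}^k \lambda_i \int U_i\,d\mu_i \;\le\; \sum_{i=1}^k \lambda_i \int V_i\,d\mu_i.$$

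Next I would apply the Gibbs variational inequality for each index $i$. For any measurable $U$ with $e^{-U}$ integrable and any probability density $f$, Jensen's inequality gives
$$\log \int e^{-U}\,dx \;=\; \log \int e^{-U-\log f} f\,dx \;\ge\; -\int U f\,dx - \int f\log f\,dx,$$
with equality when $f = e^{-U}/\int e^{-U}\,dx$. Applied to $U = V_i$ (equality case) and to $U = U_i$ (general inequality), both against the density of $\mu_i$, this gives, writing $Z_i' := \int e^{-U_i}\,dx_i$ and $H(\mu_i) = \int f_i\log f_i\,dx_i$:
$$\log Z_i \;=\; -\int V_i\,d\mu_i - H(\mu_i), \qquad \log Z_i' \;\ge\; -\int U_i\,d\mu_i - H(\mu_i).$$
Subtracting cancels the entropy term and produces
$$\log Z_i' - \log Z_i \;\ge\; \int (V_i - U_i)\,d\mu_i.$$

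Finally I would form the convex combination with weights $\lambda_i$ and invoke the first step to obtain
$$\sum_{i=1}^k \lambda_i \log Z_i' - \sum_{i=1}^k \lambda_i \log Z_i \;\ge\; \sum_{i=1}^k \lambda_i \int V_i\,d\mu_i - \sum_{i=1}^k \lambda_i \int U_i\,d\mu_i \;\ge\; 0,$$
which exponentiates to the claimed inequality $\prod_i Z_i^{\lambda_i} \le \prod_i (Z_i')^{\lambda_i}$. The main technical obstacle I would expect is ensuring that all the quantities involved are well-defined and finite: the integrability of $V_i$ and $U_i$ against $\mu_i$, the finiteness of the differential entropies $H(\mu_i)$, and the existence of the dual optimizer $(\lambda_i U_i)$ with the claimed property. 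These should follow under mild regularity (e.g., $V_i$ bounded below by a coercive quadratic, so the marginals have finite second moments) by appealing to the Agueh--Carlier / Gangbo--\'Swi\c ech duality theory recalled in Theorem~\ref{barycenter-th}, possibly after a truncation/approximation argument before passing to the limit.
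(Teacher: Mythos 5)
Your proposal is correct in substance, but it takes a genuinely different route from the paper's proof. The paper argues via transport: it forms the barycenter $\rho\,dx$ of the marginals $\mu_i$, takes the optimal maps $\nabla\Phi_i$ pushing $\rho\,dx$ onto $\mu_i$, and combines the Monge--Amp\`ere change of variables $\rho(y)=\frac{e^{-V_i(\nabla\Phi_i(y))}}{\int e^{-V_i}dx_i}\det D^2_a\Phi_i(y)$ with the complementary slackness identity $\sum_i\lambda_i U_i(\nabla\Phi_i(y))=C\sum_{i<j}\lambda_i\lambda_j\langle\nabla\Phi_i(y),\nabla\Phi_j(y)\rangle$ to obtain the pointwise bound $\prod_i\bigl(\int e^{-V_i}dx_i\bigr)^{\lambda_i}\rho(y)\le\prod_i e^{-\lambda_i U_i(\nabla\Phi_i(y))}\bigl(\det D^2_a\Phi_i(y)\bigr)^{\lambda_i}$, which it then integrates using H\"older's inequality and the push-forward of $\det D^2_a\Phi_i(y)\,dy$ under $\nabla\Phi_i$. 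You instead use only the \emph{value} of the dual problem (feasibility of $(\lambda_iV_i)$ versus optimality of $(\lambda_iU_i)$) together with the Gibbs/Jensen inequality $\log\int e^{-U_i}dx_i\ge\log\int e^{-V_i}dx_i+\int(V_i-U_i)\,d\mu_i$ for each marginal, and sum with weights $\lambda_i$; note you can even bypass the entropy terms by writing $\int e^{-U_i}dx_i=\bigl(\int e^{-V_i}dx_i\bigr)\int e^{V_i-U_i}d\mu_i$ and applying Jensen directly. Your route is shorter and avoids optimal maps, the absolutely continuous part of the Hessian, and the change-of-variables/push-forward machinery; its price is that you must integrate $V_i$ and $U_i$ against $\mu_i$, so you need $V_i,U_i\in L^1(\mu_i)$ (or the truncation step you mention), whereas the paper never integrates $V_i$ by itself and only uses the $\rho$-a.e.\ contact condition. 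Both proofs presuppose the well-posedness of the multimarginal duality (in the paper via Theorem \ref{barycenter-th}), and both lead to the same equality analysis: equality in your Jensen step forces $V_i-U_i$ to be $\mu_i$-a.e.\ constant, exactly the conclusion $V_i=U_i+C_i$ sketched after the paper's proof; on the other hand, the paper's pointwise formulation retains extra information (a pointwise bound involving the barycenter density) that is lost when one argues only at the level of integrated quantities.
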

\vskip 2mm
\noindent
\begin{proof}
Let $\rho \,dx$ be the barycenter of $d\mu_i = \frac{e^{-V_i} dx_i}{\int e^{-V_i} dx_i}$ with weights
$\lambda_i$
and $\nabla \Phi_i$ be the optimal transportation mapping pushing forward $\rho\,  dx$ onto $d\mu_i$.
Recall that for $\rho\, dx $-almost all $y$ one has (see Theorem \ref{barycenter-th}), 
$$
\sum_{i=1}^k \lambda_i U_i(\nabla \Phi_i(y))  =  C \sum_{i,j=1, i < j}^k \lambda_i \lambda_j \langle \nabla \Phi_i(y), \nabla \Phi_j(y) \rangle.
$$
Apply the change of variables formula
$$
\rho(y) = \frac{e^{-V_i(\nabla \Phi_i(y))}}{\int e^{-V_i} dx_i} \det D^2_a \Phi_i(y).
$$
One has
\begin{align*}
\prod_{i=1}^k \Bigl( \int e^{-V_i} dx_i \Bigr)^{\lambda_i} \rho(y) 
& = e^{-\sum_{i=1}^{k} \lambda_i V_i(\nabla \Phi_i(y))} \prod_{i=1}^k \bigl( {\det} D^2_a \Phi_i(y) \bigr)^{\lambda_i}
\\&
\le e^{-  C \sum_{i,j=1, i < j}^k \lambda_i \lambda_j \langle \nabla \Phi_i(y), \nabla \Phi_j(y) \rangle} \prod_{i=1}^k \bigl( {\det} D^2_a \Phi_i(y) \bigr)^{\lambda_i}
\\&
= \prod_{i=1}^k e^{-\lambda_i U_i(\nabla \Phi_i(y))}\bigl( {\det} D^2_a \Phi_i(y) \bigr)^{\lambda_i}.
\end{align*}
Integrating both sides and using H\"older's inequality, we get
\begin{eqnarray*}
\prod_{i=1}^k \Bigl( \int e^{-V_i} dx_i \Bigr)^{\lambda_i} &\le& \int \prod_{i=1}^k e^{-\lambda_i U_i(\nabla \Phi_i)}\bigl( {\det} D^2_a \Phi_i(y) \bigr)^{\lambda_i} dy\\
&\le& \prod_{i=1}^k \Bigl( \int  e^{- U_i(\nabla \Phi_i)}\bigl( {\det} D^2_a \Phi_i(y) \bigr) dy \Bigr)^{\lambda_i}
=  \prod_{i=1}^k \Bigl( \int_{\nabla \Phi_i(\mathbb{R}^n)} e^{-U_i} dx_i \Bigr)^{\lambda_i}
\\&\le&  
\prod_{i=1}^k \Bigl( \int e^{-U_i} dx_i \Bigr)^{\lambda_i}.
\end{eqnarray*}
Here we use the change of variables and the fact that the image of ${\det} D^2_a \Phi_i(y) dy $ under $\nabla \Phi_i$ is the Lebesgue
measure on $\nabla \Phi_i(\mathbb{R}^n)$. This follows, for instance from the aforementioned  result of McCann (\cite{Villani},  Theorem 4.8).

\end{proof}
\vskip 2mm
\noindent
Let us  informally analyze the equality case.
Clearly, in this case one has { for almost all $y$, }
$$
\sum_{i=1} \lambda_i V_i(\nabla \Phi_i(y)) = 
\sum_{i=1} \lambda_i U_i(\nabla \Phi_i(y)).
$$
Integrating over $\rho\,  dy$ we get that $(\lambda_i V_i)$ is a dual Kantorovich solution as well. Hence,
 by uniqueness of the dual solution
$$
V_i = U_i + C_i, \hskip 5mm \sum_{i=1}^k C_i=0.
$$
In addition, one has for all $i$ that
$$
\frac{e^{-U_i (\nabla \Phi_i)}}{\int e^{-U_i} dx_i} \det D^2 \Phi_i = \rho,
$$
or, equivalently,
$$
\frac{e^{-U_i}}{\int e^{-U_i} dx_i} = \rho(\nabla \Phi^*_i) \det D^2 \Phi^*_i.
$$
In particular, since (see Theorem \ref{barycenter-th})  
\begin{equation*}
\Phi^*_i(x_i) = \lambda_i \frac{|x_i|^2}{2} + \frac{U_i(x_i)}{C} + C_i,
\end{equation*}
every function $U_i$ must satisfy
\begin{equation}
\label{KE2}
\frac{e^{-U_i}}{\int e^{-U_i} dx_i}  = \rho\left(\frac{\nabla U_i(x_i)}{C} + \lambda_i x_i \right)\,  \det \left(\frac{D^2 U_i }{C}  + \lambda_i I\right).
\end{equation}
Thus, a maximizer of the Blaschke--Santal{\'o} inequality, if it exists,  must satisfy the system of equations (\ref{KE2}),
where every $U_i$ is convex.
\vskip 3mm
\noindent
\begin{remark}
Equation (\ref{KE2}) is an equation of the K{\"a}hler--Einstein type. We do not know whether (\ref{KE2}) admits a unique solution.
The well posedness of the classical  K{\"a}hler--Einstein  equation 
$$
\frac{e^{-\Phi}}{\int e^{-\Phi} dx} = \rho(\nabla \Phi) \det D^2 \Phi 
$$
was  proved  under broad assumptions in  \cite{CorderoKlartag}.
\end{remark}
\vskip 3mm

\subsection*{Acknowledgement}
The authors  want to thank the referee for the careful reading and suggestions for improvement.
They also want to thank J. Lehec,  F.~Barthe, M.~Fathi, and M.~Fradelizi for valuable comments.
\par
\noindent
The second author wants to thank the Hausdorff Research Institute for Mathematics. Part of the work on the paper
was carried out during her stay there.

\vskip 2mm
\noindent
Alexander V. Kolesnikov  \\
{\small Faculty of Mathematics} \\
{\small National Research Institute Higher School of Economics} \\
{\small Moscow, Russian Federation}   \\
{\small \tt sascha77@mail.ru}

\vskip 2mm
\noindent
Elisabeth Werner\\
{\small Department of Mathematics \ \ \ \ \ \ \ \ \ \ \ \ \ \ \ \ \ \ \ Universit\'{e} de Lille 1}\\
{\small Case Western Reserve University \ \ \ \ \ \ \ \ \ \ \ \ \ UFR de Math\'{e}matique }\\
{\small Cleveland, Ohio 44106, U. S. A. \ \ \ \ \ \ \ \ \ \ \ \ \ \ \ 59655 Villeneuve d'Ascq, France}\\
{\small \tt elisabeth.werner@case.edu}\\ \\

\end{document}